\definecolor{codegreen}{rgb}{0,0.6,0}
\definecolor{codegray}{rgb}{0.5,0.5,0.5}
\definecolor{codepurple}{rgb}{0.58,0,0.82}
\definecolor{backcolour}{rgb}{0.95,0.95,0.92}
\lstdefinestyle{mystyle}{
    backgroundcolor=\color{backcolour},   
    commentstyle=\color{codegreen},
    keywordstyle=\color{magenta},
    numberstyle=\tiny\color{codegray},
    stringstyle=\color{codepurple},
    basicstyle=\ttfamily\footnotesize,
    breakatwhitespace=false,         
    breaklines=true,                 
    captionpos=b,                    
    keepspaces=true,                 
    numbers=left,                    
    numbersep=5pt,                  
    showspaces=false,                
    showstringspaces=false,
    showtabs=false,                  
    tabsize=2
}
\begin{document}

\newcommand{\Mod}[1]{\ (\mathrm{mod}\ #1)}

\newtheorem{problem}{Problem}
\newtheorem{theorem}{Theorem}
\newtheorem{lemma}[theorem]{Lemma}
\newtheorem{claim}[theorem]{Claim}
\newtheorem{corollary}[theorem]{Corollary}
\newtheorem{prop}[theorem]{Proposition}
\newtheorem{definition}{Definition}
\newtheorem{question}[theorem]{Question}
\newtheorem{conjecture}{Conjecture}

\theoremstyle{remark}
\newtheorem*{note}{Note}
\newtheorem*{remark}{Remark}
\newtheorem*{remarks}{Remarks}
\def\cA{{\mathcal A}}
\def\cB{{\mathcal B}}
\def\cC{{\mathcal C}}
\def\cD{{\mathcal D}}
\def\cE{{\mathcal E}}
\def\cF{{\mathcal F}}
\def\cG{{\mathcal G}}
\def\cH{{\mathcal H}}
\def\cI{{\mathcal I}}
\def\cJ{{\mathcal J}}
\def\cK{{\mathcal K}}
\def\cL{{\mathcal L}}
\def\cM{{\mathcal M}}
\def\cN{{\mathcal N}}
\def\cO{{\mathcal O}}
\def\cP{{\mathcal P}}
\def\cQ{{\mathcal Q}}
\def\cR{{\mathcal R}}
\def\cS{{\mathcal S}}
\def\cT{{\mathcal T}}
\def\cU{{\mathcal U}}
\def\cV{{\mathcal V}}
\def\cW{{\mathcal W}}
\def\cX{{\mathcal X}}
\def\cY{{\mathcal Y}}
\def\cZ{{\mathcal Z}}

\def\A{{\mathbb A}}
\def\B{{\mathbb B}}
\def\C{{\mathbb C}}
\def\D{{\mathbb D}}
\def\E{{\mathbb E}}
\def\F{{\mathbb F}}
\def\G{{\mathbb G}}
\def\I{{\mathbb I}}
\def\J{{\mathbb J}}
\def\K{{\mathbb K}}
\def\L{{\mathbb L}}
\def\M{{\mathbb M}}
\def\N{{\mathbb N}}
\def\O{{\mathbb O}}
\def\P{{\mathbb P}}
\def\Q{{\mathbb Q}}
\def\R{{\mathbb R}}
\def\T{{\mathbb T}}
\def\U{{\mathbb U}}
\def\V{{\mathbb V}}
\def\W{{\mathbb W}}
\def\X{{\mathbb X}}
\def\Y{{\mathbb Y}}
\def\Z{{\mathbb Z}}

\def\ep{{\mathbf{e}}_p}
\def\em{{\mathbf{e}}_m}
\def\eq{{\mathbf{e}}_q}

\def\scr{\scriptstyle}
\def\\{\cr}
\def\({\left(}
\def\){\right)}
\def\[{\left[}
\def\]{\right]}
\def\<{\langle}
\def\>{\rangle}
\def\fl#1{\left\lfloor#1\right\rfloor}
\def\rf#1{\left\lceil#1\right\rceil}
\def\le{\leqslant}
\def\ge{\geqslant}
\def\eps{\varepsilon}
\def\mand{\qquad\mbox{and}\qquad}

\def\sssum{\mathop{\sum\ \sum\ \sum}}
\def\ssum{\mathop{\sum\, \sum}}
\def\ssumw{\mathop{\sum\qquad \sum}}

\def\vec#1{\mathbf{#1}}
\def\inv#1{\overline{#1}}
\def\num#1{\mathrm{num}(#1)}
\def\dist{\mathrm{dist}}

\def\fA{{\mathfrak A}}
\def\fB{{\mathfrak B}}
\def\fC{{\mathfrak C}}
\def\fU{{\mathfrak U}}
\def\fV{{\mathfrak V}}

\newcommand{\bflambda}{{\boldsymbol{\lambda}}}
\newcommand{\bfxi}{{\boldsymbol{\xi}}}
\newcommand{\bfrho}{{\boldsymbol{\rho}}}
\newcommand{\bfnu}{{\boldsymbol{\nu}}}

\def\GL{\mathrm{GL}}
\def\SL{\mathrm{SL}}

\def\Hba{\overline{\cH}_{a,m}}
\def\Hta{\widetilde{\cH}_{a,m}}
\def\Hb1{\overline{\cH}_{m}}
\def\Ht1{\widetilde{\cH}_{m}}

\def\flp#1{{\left\langle#1\right\rangle}_p}
\def\flm#1{{\left\langle#1\right\rangle}_m}
\def\dmod#1#2{\left\|#1\right\|_{#2}}
\def\dmodq#1{\left\|#1\right\|_q}

\def\Zm{\Z/m\Z}

\def\Err{{\mathbf{E}}}

\newcommand{\comm}[1]{\marginpar{%
\vskip-\baselineskip 
\raggedright\footnotesize
\itshape\hrule\smallskip#1\par\smallskip\hrule}}

\def\xxx{\vskip5pt\hrule\vskip5pt}

\def\cc#1{\textcolor{red}{#1}}

\newenvironment{nouppercase}{%
  \let\uppercase\relax%
  \renewcommand{\uppercasenonmath}[1]{}}{}
  

\title{An explicit version of Chen's theorem and the linear sieve}
\author{Matteo Bordignon, Daniel R. Johnston and Valeriia Starichkova}
\address{KTH Royal Institute of Technology, Stockholm \newline and \newline Charles University, Faculty of Mathematics and Physics, Department of Algebra, Sokolovská 83, 186 00 Praha 8, Czech Republic Department of Mathematics}
\email{matteobordignon91@gmail.com}
\address{The University of New South Wales Canberra, School of Science}
\email{daniel.johnston@unsw.edu.au}
\address{The University of New South Wales Canberra, School of Science}
\email{v.starichkova@unsw.edu.au}
\date\today
\thanks{The basis for this work was done as part of the thesis the first author wrote during the length of their PhD at the University of New South Wales Canberra. It was also partially supported by an Australian Mathematical Society Lift-off Fellowships of the first and the third author, by OP RDE project No.
CZ.$02.2.69/0.0/0.0/18\_053/0016976$ International mobility of research, technical and administrative staff at the Charles University, and by Australian RC Discovery Project DP240100186.}
\date{\today
}


\begin{abstract}
Drawing inspiration from the work of Nathanson and Yamada we prove an effective and explicit version of Chen's theorem. By contrast, existing proofs of Chen's theorem are ineffective due to their use of the Siegel-Walfisz theorem. Our main result is that every even integer larger than $\exp (\exp (32.7))$ can be written as the sum of a prime and the product of at most two primes. We also prove that all even integers $N\ge 4$ can be written as the sum of a prime and the product of at most $e^{29.3}$ primes. The main idea will be to follow a proof of Chen's theorem due to Nathanson, being more careful with the treatment of potential Siegel zeros in order to obtain an effective and explicit result. In following this framework we also prove an explicit version of the linear sieve, which substantially improves upon the previous best one by Nathanson. \newline
\newline
 \noindent \textbf{Keywords:} Chen's theorem, sieves, linear sieve, exceptional zero, explicit results.\newline
\newline
 \noindent \textbf{MSC classes:} 11N36, 11P32 (Primary) 11M20, 11N13 (Secondary)  
 \end{abstract}

\begin{nouppercase}
\maketitle
\end{nouppercase}

\section{Introduction}
One of the most famous problems in number theory is Goldbach's conjecture.
\begin{conjecture}[Goldbach]
For any even integer $N\ge 4$ there exist two primes $p_1$ and $p_2$, such that
\begin{equation*}
N=p_1+p_2.
\end{equation*}
\end{conjecture}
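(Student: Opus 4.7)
The plan is to attack this via the Hardy--Littlewood circle method. Set
\[
S(\alpha) = \sum_{p \le N} (\log p)\, e^{2\pi i p \alpha},
\]
so that the logarithmically weighted count of representations of $N$ as an ordered sum of two primes equals
\[
R(N) = \int_0^1 S(\alpha)^2 e^{-2\pi i N \alpha}\, d\alpha.
\]
First I would partition $[0,1]$ into major arcs $\fA$ near rationals $a/q$ with $q \le Q$, and minor arcs $\fB = [0,1]\setminus \fA$. On $\fA$, standard tools (Siegel--Walfisz, partial summation around $S(\alpha) \approx \frac{\mu(q)}{\varphi(q)} T(\alpha - a/q)$ with $T(\beta) = \sum_{n \le N} e^{2\pi i n \beta}$) should give the expected main term
\[
R(N) \sim \mathfrak S(N)\, N, \qquad \mathfrak S(N) = 2\prod_{p>2}\Bigl(1-\tfrac{1}{(p-1)^2}\Bigr)\prod_{\substack{p\mid N\\ p>2}}\tfrac{p-1}{p-2},
\]
which is bounded below by an absolute positive constant whenever $N$ is even. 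This major-arc analysis is routine and parallels Vinogradov's treatment of the ternary problem.

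The entire difficulty is the minor-arc bound. To conclude $R(N) > 0$ for all even $N \ge 4$, I would need
\[
\Bigl|\int_{\fB} S(\alpha)^2 e^{-2\pi i N\alpha}\, d\alpha \Bigr| = o(N).
\]
By Parseval one has $\int_0^1 |S(\alpha)|^2 \,d\alpha \ll N\log N$, so it would suffice to show $\sup_{\alpha \in \fB} |S(\alpha)| = o(N/\log N)$. Here is the wall: Vinogradov's bilinear method (and even its sharpest modern refinements) produces only $\sup_{\alpha \in \fB}|S(\alpha)| \ll N(\log N)^{-A}$, which falls short by a factor of $(\log N)^{A+1}$ before one even notices that we have used the $L^2$ average wastefully on both copies of $S$. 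The square-root barrier here is genuine: any approach that estimates $|S(\alpha)|$ pointwise and then applies Cauchy--Schwarz or Parseval will lose a factor of roughly $\sqrt{N}$, and even GRH is not known to close this gap.

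To break past this, the plan would be to extract cancellation from the oscillating factor $e^{-2\pi i N\alpha}$ in the minor-arc integral itself, rather than bounding the integrand in absolute value. Concretely, I would try to establish a Bombieri--Vinogradov-type equidistribution statement for primes in arithmetic progressions with moduli well beyond $\sqrt{N}$ and with an explicit oscillatory weight attached, of a strength sufficient to produce square-root cancellation in $\int_{\fB} S(\alpha)^2 e^{-2\pi i N\alpha}\, d\alpha$ for every single even $N$. I do not know how to prove such a statement, and neither does anyone else --- this is precisely the content of the binary Goldbach conjecture, which remains open. Consequently, the realistic part of the plan is to \emph{retreat}: weaken the conclusion by allowing one of the primes $p_2$ to be replaced by a product of at most two primes, trading the uncontrollable minor-arc contribution for a sieve-theoretic argument in the spirit of Chen. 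This is the route the present paper pursues, and it is the only route currently known to yield an unconditional theorem of this shape.
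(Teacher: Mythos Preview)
Your analysis is accurate, but note that there is no proof in the paper to compare against: the statement is labelled as a \emph{Conjecture} and the paper explicitly states that ``a complete proof appears to be out of reach for the present state of mathematics.'' So the task of proving it is, as you correctly conclude, not achievable with current methods.

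Your diagnosis of why the circle method fails for the binary problem is essentially correct. The Parseval bound $\int_0^1 |S(\alpha)|^2\,d\alpha \asymp N\log N$ combined with any pointwise minor-arc estimate of Vinogradov type $\sup_{\alpha\in\mathfrak B}|S(\alpha)|\ll N(\log N)^{-A}$ yields only $\int_{\mathfrak B}|S(\alpha)|^2\,d\alpha \ll N^2(\log N)^{1-A}$, which is never $o(N)$. The ternary problem succeeds precisely because one has three copies of $S$ and can afford to use Parseval on two of them while taking the sup on the third, gaining an extra factor of $N$. Your remark that one would need equidistribution of primes in progressions to moduli beyond $\sqrt{N}$ with oscillatory weights is a fair heuristic for what would be required.

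Your concluding retreat to Chen's theorem is exactly what the paper does: it abandons the binary Goldbach problem entirely and instead proves an explicit version of Chen's theorem via sieve methods (an explicit linear sieve combined with explicit Bombieri--Vinogradov-type input), establishing that every even $N>\exp(\exp(32.6))$ is a sum of a prime and a product of at most two primes. So your proposal does not prove the stated conjecture, but neither does the paper, and your assessment of the situation matches the paper's own.
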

This conjecture was verified for all even $N\le 4\cdot 10^{18}$ by Oliveira e Silva \cite{TOS}. However, a complete proof appears to be out of reach for the present state of mathematics. There are two results that are arguably the nearest approximations to Goldbach's conjecture: Goldbach's weak conjecture and Chen's theorem. Goldbach's weak conjecture, also known as the ternary Goldbach problem, is a proved result.
\begin{theorem}[Vinogradov--Helfgott]
For any odd number $N \ge 7$ there exist three primes $p_1, p_2$ and $p_3$, such that
\begin{equation*}
N=p_1+p_2+p_3.
\end{equation*}
\end{theorem}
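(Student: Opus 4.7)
The plan is to apply the Hardy--Littlewood circle method to prove $R(N) > 0$ for every odd $N \ge 7$, where
\begin{equation*}
R(N) = \int_0^1 S(\alpha)^3 e(-\alpha N)\, d\alpha, \qquad S(\alpha) = \sum_{p \le N} (\log p)\, e(\alpha p),
\end{equation*}
with $e(x) := e^{2\pi i x}$. Expanding the cube shows that $R(N)$ is a non-negative weighted count of representations $N = p_1 + p_2 + p_3$ with weight $(\log p_1)(\log p_2)(\log p_3)$, so positivity of $R(N)$ is equivalent to the existence of such a representation.

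First I would dissect $[0,1]$ into major arcs $\mathfrak{M}$, taken to be small neighbourhoods of reduced fractions $a/q$ with $q \le Q$ for a parameter $Q$ depending on $N$, and minor arcs $\mathfrak{m} = [0,1] \setminus \mathfrak{M}$. On $\mathfrak{M}$, I would replace $S(\alpha)$ by the approximation arising from the explicit formula for $\psi(x;q,a)$, using explicit zero-free regions for Dirichlet $L$-functions together with a numerical verification of GRH up to some conductor. The resulting integral would yield a main term of order $N^2 \mathfrak{S}(N)$, where $\mathfrak{S}(N)$ is the singular series; a direct computation shows $\mathfrak{S}(N)$ is bounded below by a positive absolute constant when $N$ is odd.

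On $\mathfrak{m}$, the key input is a Vinogradov-type bilinear estimate, most efficiently derived via Vaughan's identity, which gives $|S(\alpha)| \le N/(\log N)^A$ with explicit constants whenever $\alpha$ is close to some $a/q$ with $q$ outside the major-arc range. Combining this pointwise bound with Parseval's identity $\int_0^1 |S(\alpha)|^2 d\alpha \ll N \log N$ dominates the minor-arc contribution by the major-arc main term whenever $N \ge N_0$ for some effective threshold $N_0$.

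The main obstacle is bringing $N_0$ down to a computationally accessible range: a naive implementation of the method produces a tower-exponential $N_0$, far beyond what can be verified. Helfgott's contribution is to sharpen every explicit constant — in the large-sieve inequalities, in the major-arc evaluation, and in the minor-arc bilinear estimates — and to numerically verify GRH for Dirichlet $L$-functions of small conductor, pushing $N_0$ down to roughly $10^{27}$. The remaining odd $N$ with $7 \le N \le N_0$ are then dispatched by a reduction to the binary problem: for each such $N$ one locates a small prime $p$ with $N - p$ even and at most $4 \cdot 10^{18}$, and invokes Oliveira e Silva's verification of Goldbach's conjecture \cite{TOS} to write $N - p = p_2 + p_3$.
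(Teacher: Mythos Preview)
The paper does not prove this theorem at all: it is stated in the introduction purely as background, with the proof attributed to Vinogradov \cite{Vinogradov} for sufficiently large $N$ and to Helfgott \cite{Helfgott} for the remaining cases. There is therefore no ``paper's own proof'' to compare your proposal against.

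That said, your outline is a faithful high-level sketch of the actual Vinogradov--Helfgott argument: the circle-method decomposition, the major-arc evaluation via explicit information on Dirichlet $L$-functions, the minor-arc bound via Vaughan's identity and bilinear estimates, Helfgott's sharpening of all constants to bring the threshold down to roughly $10^{27}$, and the final reduction to the verified range of the binary Goldbach conjecture. As a summary of the literature this is accurate, but it is not something the present paper undertakes or claims to undertake.
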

In particular, Vinogradov proved in \cite{Vinogradov} that all odd numbers larger than some constant $C$ can be written as a sum of three prime numbers. According to \cite[p. 201]{Chu}, the first explicit value of $C$ was established by Borodzkin in his unpublished doctoral dissertation and he later, in \cite{Bro}, improved the result to $C=\exp (\exp (16.038))$. After a series of further improvements, the final push to prove Goldbach's weak conjecture was done by Helfgott in \cite{Helfgott}.

In this paper we instead focus on obtaining an explicit version of Chen's theorem, first proved in 1966 by Chen \cite{Chen2, Chen1}.
\begin{theorem}[Chen]
All sufficiently large even numbers can be written as the sum of a prime and another number that is the product of at most two primes (a semi-prime).
\end{theorem}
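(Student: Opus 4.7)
The plan is to follow the classical sieve-theoretic approach due to Chen, made numerically explicit along the lines of Nathanson and Yamada. Write $N$ for the even integer under consideration and set
\[
\cA = \{N - p : p < N,\ p \text{ prime},\ p \nmid N\}.
\]
Choose thresholds $z_1 = N^{1/10}$ and $z_2 = N^{1/3}$, to be optimised later. Since $(N^{1/3})^{4} > N$, any element of $\cA$ coprime to $P(z_2) = \prod_{q < z_2} q$ has at most three prime factors, each at least $z_2$. Introduce
\[
\Omega_1 = \#\{p < N : N - p = P_2,\ \text{both prime factors of } P_2 \text{ are } \ge z_1\}
\]
and
\[
\Omega_3 = \#\{p < N : N - p = p_1 p_2 p_3,\ z_1 \le p_1 < z_2 \le p_2 \le p_3\},
\]
and invoke Chen's decomposition, which yields an inequality
\[
\Omega_1 \ge S(\cA, P(z_1)) - \tfrac{1}{2}\Omega_3 - O\left(N^{1/2+\eps}\right).
\]
It then suffices to prove that the right-hand side is strictly positive.

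The next step is to bound $S(\cA, P(z_1))$ from below by applying the linear sieve of Jurkat--Richert / Rosser--Iwaniec to $\cA$, using an explicit Bombieri--Vinogradov-type mean value theorem to handle a level of distribution at $D = N^{1/2}(\log N)^{-B}$. This produces a lower bound of the shape
\[
S(\cA, P(z_1)) \ge \left(f(s_1) - o(1)\right)\frac{\mathfrak{S}(N)\,N}{\log^2 N},
\]
where $\mathfrak{S}(N)$ is the usual singular series for the Goldbach problem, $f$ is the lower-bound linear sieve function, and $s_1 = \log(N^{1/2})/\log z_1$. Dually, $\Omega_3$ is bounded from above by a switching argument: for each fixed prime $p_1 \in [z_1, z_2)$, write $p_2 p_3 = (N-p)/p_1$ and apply the upper-bound linear sieve to $\cB_{p_1} = \{N - p_1 m : m \le N/p_1\}$. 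Summing the resulting upper estimates over $p_1$ gives
\[
\Omega_3 \le \left(F(s_2) + o(1)\right)\frac{\mathfrak{S}(N)\,N}{\log^2 N}\,J,
\]
for a computable integral $J$ in the sieve parameters.

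Finally, one chooses $z_1$, $z_2$ and any intermediate parameters so as to minimise $F(s_2)J$ against $f(s_1)$, forcing $f(s_1) > \tfrac{1}{2} F(s_2) J$ and hence $\Omega_1 > 0$ for all $N$ beyond a fully explicit threshold. I expect the main obstacle to be not the sieve-theoretic inequality itself, whose shape is classical, but the precise control of error terms: one needs an explicit Bombieri--Vinogradov theorem valid at level close to $N^{1/2}$ with constants small enough that, together with the explicit linear sieve constants, the inequality $f(s_1) > \tfrac{1}{2} F(s_2) J$ survives numerically for $N > \exp(\exp(32.6))$. Tracking every logarithmic and constant loss through the sieve and the mean value theorem, and absorbing them into an admissible threshold, is where the bulk of the technical work and parameter tuning will lie.
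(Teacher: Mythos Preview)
Your decomposition is incomplete: the inequality
\[
\Omega_1 \ge S(\cA, P(z_1)) - \tfrac{1}{2}\Omega_3 - O\bigl(N^{1/2+\eps}\bigr)
\]
is not Chen's decomposition and is false as stated. With $z_1 = N^{1/10}$, an element of $S(\cA,P(z_1))$ can have up to nine prime factors, and $\Omega_3$ only captures those with exactly three in the specific configuration $z_1\le p_1<z_2\le p_2\le p_3$; it misses products of three primes all lying in $[z_1,z_2)$ and every product of four or more primes. The actual Chen inequality (Lemma~\ref{lem:>} in the paper, from Nathanson) has \emph{three} sifting terms,
\[
\pi_2(N) > S(A,P(z)) - \tfrac{1}{2}\sum_{z\le q<y,\ q\nmid N} S(A_q,P(z)) - \tfrac{1}{2}S(B,P(y)) - 2N^{7/8} - N^{1/3},
\]
with $z=N^{1/8}$, $y=N^{1/3}$. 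The middle sum $\sum_q S(A_q,P(z))$ is the Kuhn-type weight that penalises elements with many prime factors in $[z,y)$; without it the combinatorics simply do not close. Your $\Omega_3$ corresponds to the third term $S(B,P(y))$, and the upper bound you propose for it (fixing $p_1$ and sieving $\cB_{p_1}=\{N-p_1m\}$) is closer in spirit to how the paper treats the \emph{middle} sum, not the switching term. The paper bounds $S(B,P(y))$ by Chen's switching trick: one sieves the whole set $B=\{N-p_1p_2p_3\}$ by $P(y)$ and controls the remainder via a bilinear form and the large sieve (Lemma~\ref{lemma:bf}), not by fixing $p_1$.

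Even after restoring the correct three-term decomposition, you are underestimating where the real difficulty lies for the explicit version. An ``explicit Bombieri--Vinogradov theorem valid at level close to $N^{1/2}$'' is not available in any useful form because of the possible Siegel zero: the known explicit mean-value theorems are either ineffective or carry an exceptional-modulus caveat. The paper's main technical contribution is precisely a workaround for this: it splits according to whether the exceptional modulus $k_1$ is below or above a threshold $K_0$, uses the first author's explicit bounds on the exceptional zero in the small case, and in the large case runs an inclusion--exclusion over divisors of $k_1$ (Lemma~\ref{exinclem}) so as to sieve with $P^{(j)}(z)$ and sidestep the exceptional character entirely. Until you confront the Siegel-zero obstruction head-on, no amount of parameter tuning in the sieve will produce the constant $\exp(\exp(32.6))$.
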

A lot of work has been done to improve Chen's result. Simpler proofs were given in \cite{Halberstam}, \cite{Ross} and \cite{Nathanson}. Further, Chen's theorem was quantitatively improved, in the counting of the number of ways in which large enough even numbers can be written as the sum of a prime and a semi-prime, by Chen himself in \cite{Chen} and \cite{Chen3}, and further in \cite{Cai}, \cite{Wu}, \cite{Cai1} and \cite{Wu1}. Many generalizations of Chen's theorem have also been obtained. Generalizations with bounds on the prime and/or semi-prime were given in \cite{Lu1}, \cite{Cai2}, \cite{Cai3}, \cite{Li} and \cite{Cai4}. Lu and Cai \cite{Lu}, proved a version in which the prime and semi-prime are in certain arithmetic progressions. Hinz \cite{Hinz} proved a version for totally real algebraic number fields. Car \cite{Car} proved a version for $F_q[X]$, the ring of polynomials with one variable over a finite field of $q$ elements.

It is interesting to note that while a lot of effort was put into making Vinogradov's proof of Goldbach's weak conjecture completely explicit, not much effort was put into making Chen's theorem explicit. The only attempt was made by Yamada in \cite{Yamada1}, but several mistakes can be found in the proof; see \cite[(87) \& (104)]{Yamada1}, where a $\log $ term appears to be missing, and, notably, that no proof is given of the explicit version of the linear sieve that is used and that this version is inconsistent with the versions in \cite{JR},  \cite{Iwaniec} and \cite{Nathanson}. The aim of this paper is thus to obtain the first complete explicit version of Chen's theorem. Our main result is as follows.
\begin{theorem}
\label{Theo:B.}
Let $\pi_2(N)$ denote the number of representations of a given even integer $N$ as the sum of a prime and a semi-prime. If $N> \exp (\exp (32.7))$, then
\begin{equation}\label{pi2lower}
    \pi_2(N)> 2\cdot 10^{-4}\cdot\frac{U_N N}{\log^2N},
\end{equation}
where, with $\gamma$ the Euler--Mclaurin constant,
\begin{equation}
U_N:=2 e^{-\gamma}\prod_{p>2}\left( 1-\frac{1}{(p-1)^2}\right)\prod_{\substack{p>2\\p|N}}\frac{p-1}{p-2}.\label{UNeq}
\end{equation}
\end{theorem}
Here we note that directly correcting the mistakes in \cite{Yamada1} would lead to a much worse lower bound than the $\exp (\exp (36))$ claimed in the paper.  While the lower bound $\exp (\exp (32.7))$ that we prove appears only to be a modest improvement on the one in the incomplete work of Yamada \cite{Yamada1}, this is mainly due to this constant being around the optimal that is possible to obtain with the present method, for more details see \S \ref{section:B}.  \newline
Using \eqref{pi2lower}, we will prove the following corollary, which is essentially a stronger form of Chen's theorem.
\begin{corollary}\label{distinctcor}
    Every even integer $N>\exp(\exp(32.7))$ can be represented as the sum of a prime and a square-free number with at most two prime factors.
\end{corollary}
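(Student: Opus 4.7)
The plan is to deduce the corollary from Theorem~\ref{Theo:B.} by counting and subtracting the ``bad'' representations. Recall that $\pi_2(N)$ counts representations $N=p+m$ where $m$ is a product of at most two primes (counted with multiplicity, and possibly including $m=1$). A representation is ``bad'' for the corollary precisely when the corresponding $\eta=m$ is either $1$ or non-squarefree; since $m$ has at most two prime factors, non-squarefree means $m=q^2$ for some prime $q$.

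First I would bound the number of bad representations. The case $m=1$ forces $p=N-1$, contributing at most one representation. The case $m=q^2$ forces $p=N-q^2$ for a prime $q\le\sqrt{N}$, so this case contributes at most $\pi(\sqrt{N})$ representations. Hence the number of representations $N=p+\eta$ with $\eta>1$ squarefree and having at most two prime factors is at least
\begin{equation*}
\pi_2(N)-\pi(\sqrt{N})-1.
\end{equation*}

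Next I would show this quantity is positive for $N>\exp(\exp(32.6))$. Applying Theorem~\ref{Theo:B.}, we have $\pi_2(N)>10^{-3}U_N N/\log^2 N$, and the product defining $U_N$ is bounded below by an absolute positive constant (the relevant Hardy--Littlewood singular series is $\ge 2e^\gamma\prod_{p>2}(1-(p-1)^{-2})$, and the factor over $p\mid N$ is at least $1$). On the other hand, an explicit Chebyshev-type bound such as Rosser--Schoenfeld gives $\pi(\sqrt{N})\le c\sqrt{N}/\log N$ for some small absolute constant $c$. Comparing,
\begin{equation*}
\frac{\pi(\sqrt{N})}{\pi_2(N)}\ll \frac{\log N}{\sqrt{N}},
\end{equation*}
and for $N>\exp(\exp(32.6))$ this ratio is astronomically small, so certainly $\pi_2(N)>\pi(\sqrt{N})+1$, yielding the corollary.

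There is no real obstacle here: the only quantitative input besides Theorem~\ref{Theo:B.} is a completely standard explicit upper bound for $\pi(x)$, and the enormous size of $N$ gives vastly more slack than needed. The entire argument is essentially a one-line deduction once the trivial observation about $m=1$ and $m=q^2$ is made.
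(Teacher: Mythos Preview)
Your proposal is correct and follows essentially the same approach as the paper: subtract from $\pi_2(N)$ the at most one representation with $\eta=1$ and the representations with $\eta=q^2$, then invoke Theorem~\ref{Theo:B.} to see the remainder is positive. The only difference is that the paper bounds the number of square representations by the cruder $\sqrt{N}$ rather than your $\pi(\sqrt{N})$, which is immaterial given the size of $N$.
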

In particular, Corollary \ref{distinctcor} implies that we can take the prime factors in Chen's theorem to be distinct. 

We also prove the following, which is a simple consequence of Theorem \ref{Theo:B.} and a result of Dudek \cite{Dudek1}, where he proves that all integers larger than two can be written as the sum of a prime and a square-free number.
\begin{theorem}
\label{Theo:B1.}
All even integers $N\ge 4$ can be written as the sum of a prime and the product of at most $e^{29.3}$ primes.
\end{theorem}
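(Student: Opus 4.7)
The plan is to split on $N$. For $N > \exp(\exp(32.6))$, Theorem~\ref{Theo:B.} immediately gives $\pi_2(N) > 0$; hence $N = p + m$ with $m$ either prime or the product of two primes, and in particular $\Omega(m) \le 2 \le e^{29.3}$. The substance therefore lies in the range $4 \le N \le \exp(\exp(32.6))$. Here I would invoke the theorem of Dudek from \cite{Dudek1}, according to which every integer $N$ above some small explicit threshold is of the form $N = p + m$ with $p$ prime and $m$ squarefree; the handful of cases below Dudek's threshold are verified by hand (for instance $4 = 2+2$, $6 = 3+3$, $8 = 3+5$, and so on).

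Given such a squarefree $m$, write $m = q_1 q_2 \cdots q_k$ with $q_1 < \cdots < q_k$ distinct primes, so that $\Omega(m) = k$ and $m \ge p_1 p_2 \cdots p_k = p_k\#$, the $k$-th primorial. An explicit Rosser--Schoenfeld estimate $\theta(p_k) \ge (1 - \epsilon_k) p_k$ together with the explicit lower bound $p_k \ge k(\ln k + \ln \ln k - 1)$ (valid for $k$ not too small) gives $\ln m \ge \theta(p_k) \ge (1-\epsilon_k)\, k \ln k$. Combining with $\ln m \le \ln N \le \exp(32.6)$, this yields an inequality of the shape $k \ln k \le (1+\delta)\exp(32.6)$ with $\delta$ small. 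At $k = e^{29.3}$ one computes $k \ln k = 29.3 \cdot e^{29.3} = \exp(29.3 + \ln 29.3) = \exp(32.678\ldots)$, which already exceeds $\exp(32.6)$; monotonicity of $k \ln k$ then forces $k < e^{29.3}$, as required.

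The main obstacle is the tightness of this final numerical step: the available slack between $\exp(32.6)$ and $\exp(32.678)$ is only a factor of roughly $e^{0.08} \approx 1.08$, so the argument demands sharp explicit versions of Rosser--Schoenfeld for $\theta(p_k)$ and $p_k$ rather than asymptotic estimates, and some care is needed to confirm $\epsilon_k$ and $\delta$ really fit inside this window for all the relevant $k$. It is also essential that Dudek's $m$ be squarefree: the trivial decomposition $N = 2 + (N-2)$ would only yield the bound $\Omega(N-2) \le \log_2 N$, which near the top of the range is about $1.44 \exp(32.6)$, larger than $e^{29.3}$ by a factor of roughly $40$, so the squarefreeness genuinely drives the logarithmic gain from $\log_2 N$ down to $\log N / \log\log N$ that makes the statement go through.
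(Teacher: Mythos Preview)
Your approach is essentially the same as the paper's: split at $N=\exp(\exp(32.6))$, apply Theorem~\ref{Theo:B.} above, and below apply Dudek's theorem to write $N=p+m$ with $m$ squarefree, then bound $\Omega(m)$ via $m\ge p_k\#$ and explicit Rosser--Schoenfeld estimates. The paper carries this out by simply citing \cite[Theorems 3 \& 4]{R-S} to locate the largest $k$ with $\prod_{i\le k}p_i\le e^{e^{32.6}}$ in the range $e^{29.2}<k<e^{29.3}$; your version just unpacks this with an explicit $\theta(p_k)\ge(1-\epsilon_k)p_k$ and $p_k\ge k(\ln k+\ln\ln k-1)$ chain. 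Two minor remarks: Dudek's result already covers all integers greater than $2$, so there is no ``handful of cases below Dudek's threshold'' to check; and the numerical window is not quite as tight as you fear, since the paper's bound is really $k<e^{29.3}$ with room down to $e^{29.2}$.
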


Theorem \ref{Theo:B1.} makes explicit a result of R\'enyi \cite{Renyi}. We also remark that the proof of Theorem \ref{Theo:B1.} is quite wasteful, meaning the number $e^{29.3}$ can certainly be lowered with more work. The second and third authors are currently writing a follow up article in this direction.

For the proof of Theorem \ref{Theo:B.} we will draw inspiration from the works of Nathanson in \cite{Nathanson} and Yamada in \cite{Yamada1}. In particular, Nathanson \cite[Theorem 10.1]{Nathanson} gives a proof of Chen's theorem, of which Yamada \cite[Theorem 1.1]{Yamada1} made a partial attempt to make explicit. The technique uses an explicit version of the linear sieve to obtain upper and lower bounds for the number of certain sifted integers, combined with explicit versions of the prime number theorem for primes in arithmetic progression and an upper bound for an exceptional zero of a Dirichlet $L$-function. Note that previous proofs of Chen's theorem, including Nathanson's \cite{Nathanson}, are ineffective. Hence, to obtain an explicit version a modified approach is required. That is, one cannot simply repeat each step of these past proofs whilst keeping track of the error terms. The main problem is making the error term in the linear sieve explicit, accounting for the possible existence of an exceptional zero. We address this problem by splitting the argument into two cases: one when the exceptional modulus is ‘small' and one when it is  ‘large'. In the case when the modulus is ‘small' it is possible to absorb the Siegel zero into the error term concerning primes in arithmetic progression. In the second case, a variant of the inclusion-exclusion principle is used, avoiding the Siegel zero but introducing more complicated error terms.

For reference, all of the notation used at the paper is given at the end, in Section \ref{sectnot}. Otherwise, an outline of the paper is as follows. In Section \ref{section:LS} we prove a new explicit version of the linear sieve and introduce other preliminary results and definitions. In Section \ref{section:sur} we state several lemmas from existing literature that will be frequently used in the later sections. For our application of the linear sieve, the remainder term essentially corresponds with the error term in the prime number theorem for arithmetic progressions. Thus, in Section \ref{sec:PNT} we prove a collection of explicit results regarding primes in arithmetic progressions. Here, we use a recent result for the prime number theorem for primes in arithmetic progressions given by the first author in \cite{Bordignon4} and an improved upper bound for the exceptional zero given by the first author in \cite{Bordignon1, Bordignon2}. In Section \ref{sec:ps}, we then use the preceding results to set up all the required preliminaries for sieving. In Sections \ref{section:1}, \ref{section:2} and \ref{section:3} we obtain upper and lower bounds for the sifted integer sets. In Section \ref{section:B} we prove Theorem \ref{Theo:B.} and in Section \ref{section:B1} we conclude by proving Corollary \ref{distinctcor} and Theorem \ref{Theo:B1.}.

\section*{Acknowledgements}
We would like to thank our supervisor Tim Trudgian for his help in developing this paper and his insightful comments. We would also like to thank Leo Goldmakher, Bryce Kerr and Kevin O’Bryant for their helpful comments and suggestions.

\section{An explicit version of the linear sieve}\label{sievesect}
\label{section:LS} 
Nathanson's proof of Chen's theorem \cite[\S 10]{Nathanson}, which we roughly follow, requires multiple applications of lower and upper bounds of the linear sieve. Therefore, in order to obtain the best explicit result, we prove the following theorem, which is an improved and more general version of Nathanson's linear sieve bounds in \cite[Theorem 9.7]{Nathanson}. Notably, our result is written in a general form, and can thus be used in other applications beyond Chen's Theorem.
\begin{theorem}[The linear-sieve, explicit version]
\label{theo:JR}
Let $A=\{a(n)\}_{n=1}^{\infty}$ be an arithmetic function such that
\begin{equation*}
a(n)\ge 0 \quad \text{for all} \quad n \quad \text{and} \quad 
|A|=\sum_{n=1}^{\infty} a(n)< \infty.
\end{equation*}
Let $\mathbb{P}$ be a set of prime numbers and for $z\ge 2$, let
\begin{equation*}
P(z):=\prod_{\substack{p \in \mathbb{P}\\ p<z}}p.
\end{equation*}
Let 
\begin{equation*}
S(A,\mathbb{P}, z):=\sum_{\substack{n=1\\ (n, P(z))=1}}^{\infty}a(n).
\end{equation*}
For every $n\ge 1$, let $g_n(d)$ be a multiplicative function such that
\begin{equation*}
0\le g_n(p)< 1 \quad \text{for all} \quad p \in \mathbb{P}.
\end{equation*}
Define $|A_d|$ and $r(d)$ by
\begin{equation}\label{eq:Adrd}
    |A_d|:=\sum_{\substack{n=1\\ d|n}}^{\infty} a(n)= \sum_{n=1}^{\infty} a(n)g_n(d)+r(d).
\end{equation}
Let $\mathbb{Q}\subseteq \mathbb{P}$, and $Q$ be the product of its primes. Suppose that, for some $\varepsilon$ satisfying $0<\varepsilon\le 1/74$, the inequality
\begin{equation}
\label{eq:cond1}
\prod_{\substack{p\in \mathbb{P}/\mathbb{Q}\\ u \le p < z}}(1-g_n(p))^{-1}<(1+\varepsilon) \frac{\log z}{\log u},
\end{equation}
holds for all $n$ and $1<u<z$. Then, for any $D\ge z$ we have the upper bound
\begin{equation}\label{suppereq}
    S(A,\mathbb{P},z)<(F(s)+\varepsilon C_{1}(\varepsilon) e^2h(s))X_A+R,
\end{equation}
and for any $D\ge z^2$ we have the lower bound
\begin{equation}\label{slowereq}
    S(A,\mathbb{P},z)>(f(s)-\varepsilon C_{2}(\varepsilon) e^2h(s))X_A-R,
\end{equation}
where
\begin{equation*}
s:=\frac{\log D}{\log z},
\end{equation*}
\begin{equation} \label{eq-def-h(s)}
h(s):= \begin{cases} 
      e^{-2} & 1\le s \le 2, \\
      e^{-s} & 2\le s \le 3, \\
      3s^{-1}e^{-s} & s\ge 3,
      \end{cases} 
\end{equation}
$F(s)$ and $f(s)$ are the two functions defined by the following delay differential equations (see \cite{Cai1}):
\begin{equation} \label{def-f(s)-F(s)}
    \begin{cases}
        F(s) = \frac{2 e^{\gamma}}{s}, \quad f(s) = 0 &\text{for } 0 < s \le 2,\\
        (sF(s))' = f(s - 1), \quad (sf(s))' = F(s-1) &\text{for } s \ge 2,
    \end{cases}
\end{equation}
$C_1(\varepsilon)$ and $C_2(\varepsilon)$ come from Table \ref{tab:fF},
\begin{equation} \label{eq: def-X_A}
    X_A:=\sum_{n=1}^{\infty} a(n) \prod_{p\mid P(z)}(1-g_n(p))=|A|\prod_{p\mid P(z)}(1-g_n(p)),
\end{equation}
and the remainder term is
\begin{equation}\label{Rdef}
    R:=\sum_{\substack{d|P(z)\\ d<QD}}|r(d)|.
\end{equation}
If there is a multiplicative function $g(d)$ such that $g_n(d)=g(d)$ for all $n$, then
\begin{equation}\label{vzdef}
X_A=V(z)|A|, \quad \text{where} \quad 
V(z):=\prod_{p|P(z)}(1-g(p)).
\end{equation}
\end{theorem}
\begin{table}
\centering
    \begin{tabular}{ | l | l | l |}
    \hline
    $\varepsilon^{-1}$ & $C_1(\varepsilon) $ & $C_2(\varepsilon)$ \\  
    \hline
    $74$& $631$ & $630$ \\
    \hline
    $76$& $559$ & $559$ \\
    \hline
    $78$& $504$ & $504$ \\
    \hline
    $80$& $461$ & $461$ \\
    \hline
    $85$& $386$ & $386$ \\
    \hline
    $90$& $336$ & $337$ \\
    \hline
    \end{tabular}
\begin{tabular}{ | l | l | l |}
    \hline
    $\varepsilon^{-1}$ & $C_1(\varepsilon) $ & $C_2(\varepsilon)$  \\
    \hline
    $95$& $302$ & $302$ \\
    \hline
    $100$& $276$ & $277$ \\
    \hline
    $120$& $218$& $219$\\
    \hline
    $140$& $189$& $190$ \\
    \hline
    $160$& $172$ & $173$ \\
    \hline
    $180$ & $161$ & $162$ \\
    \hline
    \end{tabular}
    \begin{tabular}{ | l | l | l |}
    \hline
    $\varepsilon^{-1}$ & $C_1(\varepsilon) $ & $C_2(\varepsilon)$ \\  
    \hline
    $200$& $153$ & $154$ \\
    \hline
    $300$& $133$ & $134$ \\
    \hline
    $400$& $125$ & $126$ \\
    \hline
    $500$& $121$ & $122$\\
    \hline
    $600$& $118$ & $119$\\
    \hline
    $700$& $116$ & $117$\\
    \hline
    \end{tabular}
    \begin{tabular}{ | l | l | l |}
    \hline
    $\varepsilon^{-1}$ & $C_1(\varepsilon) $ & $C_2(\varepsilon)$ \\  
    \hline
    $800$& $115$ & $116$\\
    \hline
    $900$& $114$ & $115$\\
    \hline
    $1000$& $113$ & $114$\\
    \hline
    $2000$& $109$ & $110$\\
    \hline
    $10000$& $106$ & $108$\\
    \hline
    $100000$& $106$ & $107$\\
    \hline
    \end{tabular}   
\caption{Values for $C_1(\varepsilon)$ and $C_2(\varepsilon)$.}  
\label{tab:fF}  
\end{table}
To read Table \ref{tab:fF} it is useful to remember that $C_1(\varepsilon)$ and $C_2(\varepsilon)$ are decreasing in $\varepsilon^{-1}$. This will be made evident from the proof.
We also note that we chose to stop at $\varepsilon^{-1}=100000$ as any larger value would give the same upper bound (to the nearest integer) for $C_1(\varepsilon)$ and $C_2(\varepsilon)$.

We observe that for $\varepsilon=1/200$ and all $s\ge 1$
\begin{equation}\label{conste200eq}
    154\varepsilon e^2h(s)  \le 0.77 ~\text{and} ~153\varepsilon  e^2h(s)\le 0.765.
\end{equation}
This gives a uniform upper bound for the `constants' appearing in Theorem~\ref{theo:JR}. In \cite{Nathanson} the equivalent upper bounds for $\varepsilon C_i(\varepsilon) e^2 h(s)$ are $\approx 2210$, and thus ours in \eqref{conste200eq} are around $3000$ times smaller. 
\subsection{Introduction}\label{introsub}
To begin with, we note that the functions $f$ and $F$ from Theorem \ref{theo:JR} may be equivalently defined as follows \cite[Theorem 9.4]{Nathanson}:
\begin{align}
    &F(s):= 1 + \sum_{\substack{n = 1 \\ n \text{ odd}}} f_n(s), ~\text{ for } s \ge 1, \label{bigFdeffn}\\
    &f(s):= 1 - \sum_{\substack{n = 1 \\ n \text{ odd}}} f_n(s), ~\text{ for } s \ge 2. \label{lilFdeffn}
\end{align}
Here,
\begin{equation}
\label{eq:f11}
    sf_1(s):=\begin{cases} 
      3-s,& 1\le s \le 3, \\
      0 & s>3.
      \end{cases} 
\end{equation} 
Then, if $n\ge 2$ is even and $s\ge 2$, or if $n\ge 3$ is odd and $s\ge 3$,
\begin{equation}
\label{eq:f22}
sf_n(s):=\int_s^{\infty}f_{n-1}(t-1)\mathrm{d}t.
\end{equation}
Finally, if $n$ is odd and $1\le s \le 3$, then
\begin{equation}
\label{eq:odd}
    sf_n(s):=3f_n(3)=\int_3^{\infty}f_{n-1}(t-1)\mathrm{d}t.
\end{equation}

A key part of the proof of Theorem \ref{theo:JR} is based on finding accurate upper bounds for the functions $f_n(s)$. We will focus on improving Nathanson's bound on $f_n(s)$ in \cite[Chapter 9]{Nathanson}, combining his analytic approach with a more computational one.
To this aim we introduce an elementary method, namely, approximating an integral by Riemann sums, to obtain an upper bound for the function $f_n(s)$ for `small' $n$. The chosen upper bound function is $h(s)$ from \eqref{eq-def-h(s)} which was also used by Nathanson and indeed appears to be a numerically good approximation for $f_n(s)$. Our aim will be to find a value $c_n$ such that 
\begin{equation}
\label{eq:fh}
    f_n(s)\le 2e^2 (c_n)^{n-1}h(s)
\end{equation}
when $n$ is odd and $s\ge 1$, or if $n$ is even and $s\ge 2$. Note that by the definitions \eqref{eq:f11} and \eqref{eq-def-h(s)}, we can take $c_1=1$. Thus, it suffices to find values for $c_n$ for $n\ge 2$.

Our computational approach, which we detail in Section \ref{EXPS}, yields the following values for $c_n$ when $2\le n\le 500$. 
\begin{lemma}
\label{lemma:fsmall}
The bound \eqref{eq:fh} holds with $c_n$ as in Table \ref{tab:cn1} below.
\begin{table}[H]
\caption{Valid values of $c_n$ for $2\le n\le 500$}  
\label{tab:cn1}  
\centering
    \begin{tabular}{ | l | l | }
    \hline
    $n$ & $c_n$   \\  
    \hline
    $2$& $0.33$    \\  
    \hline
    $3$& $0.39$ \\
    \hline
    $4$& $0.45$ \\
    \hline
    $5$& $0.51$  \\
    \hline
    $6$& $0.54$  \\
    \hline
    \end{tabular}
     \begin{tabular}{ | l | l | }
    \hline
    $n$ & $c_n$    \\
    \hline
    $7$ & $0.57$  \\
    \hline
    $8$& $0.58$  \\
    \hline
     $9-10$ &  $0.61$  \\  
    \hline
    $11-12$ & $0.63$  \\
    \hline
    $13$& $0.64$ \\
    \hline
    \end{tabular}
     \begin{tabular}{ | l | l | }
    \hline
     $n $ & $c_n$  \\ 
     \hline
    $14$ &$0.65$ \\
    \hline
    $15-18$ &$0.66$ \\
    \hline
    $19-20$ &$0.67$ \\
    \hline
    $21-26$ &$0.68$\\
    \hline
    $27-34$ & $0.69$ \\  
    \hline
    \end{tabular}
    \begin{tabular}{ | l | l | }
    \hline
    $n$ & $c_n$   \\  
    \hline
    $35-46$ & $0.7$\\
    \hline
    $47-82$ & $0.71$\\
    \hline
    $83-345$ & $0.72$\\
    \hline
    $346-500$ & $0.73$\\
    \hline
    &\\
    \hline
    \end{tabular}
\end{table}
\end{lemma}
Certainly, it is possible to compute these values of $c_n$ to more decimal places and larger $n$ if required. We then extend Lemma~\ref{lemma:fsmall} to all $n$ by using the following analytic result of Nathanson.
\begin{lemma}[{\cite[Lemma 9.7]{Nathanson}}]\label{lemma:f_n}
    For all $n\ge 2$ we have that \eqref{eq:fh} holds with
    \begin{equation*}
        c_n=0.9607.
    \end{equation*}
\end{lemma}
The rest of the proof of Theorem \ref{theo:JR} is laid out as follows. In Section \ref{EXPS} we prove Lemma~\ref{lemma:fsmall}. Then, in Section \ref{AA} we provide some useful bounds relating to the function $h(s)$. Finally, in Section \ref{ELS} we finish the proof of Theorem \ref{theo:JR}. Explicit versions of the inequality \eqref{eq:cond1} are then included in a supplementary section \ref{subsec: expl-epsilon}.

\subsection{Numerical approximation of $f_n(s)$}
\label{EXPS}
Using the definition \eqref{eq:f11}--\eqref{eq:odd} of $f_n(s)$ we obtain
\begin{equation}
\label{eq:f2}
sf_2(s)= \begin{cases} 
      s-3\log(s-1)+3 \log 3-4,~& 2\le s \le 4, \\
      0,~& s\ge 4,
      \end{cases} 
\end{equation}
but for larger $n$ the solution is more complicated. From \eqref{eq:f2} and the definition \eqref{eq-def-h(s)} of $h(s)$ we find $c_2=0.33$ works in \eqref{eq:fh}. For $n\ge 3$, we introduce a simple computational framework to bound $f_n(s)$ above by $h(s)$. We start by observing from \eqref{eq:f22} and \eqref{eq:odd}, that for $n\ge 4$ even and $2\le s \le 4$,
\begin{equation}
\label{eq:f4}
sf_n(s)=3f_{n-1}(3)\log\left(\frac{3}{s-1}\right)+\int_4^{\infty}f_{n-1}(t-1)\mathrm{d}t.
\end{equation}
Therefore, by \eqref{eq:odd} and \eqref{eq:f4}, we need to approximate 
\begin{equation*}
    \int_{\max(3,s)}^\infty f_{n-1}(t-1)\mathrm{d}t
\end{equation*}
when $s\ge 1$ and $n\ge 3$ is odd, and
\begin{equation*}
    \int_{\max(4,s)}^\infty f_{n-1}(t-1)\mathrm{d}t
\end{equation*}
when $s\ge 2$ and $n\ge 4$ is even. Now, since $f_n(s)$ is decreasing in $s$, and $f_n(t)=0$ for $t\ge n+2$, we can use the Riemann sum approximation (with interval length $1/1000$):
\begin{align}
    \int_{\sigma}^\infty f_{n-1}(t-1)\mathrm{d}t&\le\sum_{i=0}^{1000(n-1)}\frac{f_{n-1}\left((\sigma-1)+\frac{i}{1000}\right)}{1000}\label{sigmaapprox},
\end{align}
with $\sigma=\max(3,s)$ or $\sigma=\max(4,s)$. Here, we chose an interval length of $1/1000$ so that the bound \eqref{sigmaapprox} was sharp enough for our purposes whilst still easy to calculate on a modern computer. In particular, we ran some longer computations with smaller intervals and only found a marginal improvement in our results. 

After bounding the relevant integral in \eqref{sigmaapprox} one then obtains an upper bound for $f_n(s)$ by \eqref{eq:f22}, \eqref{eq:odd} or \eqref{eq:f4}. For our purposes, we used \eqref{sigmaapprox} recursively to approximate $f_n(1+i/1000)$ for $3\le n\le 499$ odd and $0\le i\le 1000(n+1)$, and $f_n(2+i/1000)$ for $4\le n\le 500$ even and $0\le i\le 1000n$.

Finally, we can use our bounds for $f_n(s)$ to compute $c_n$ for $3\le n\le 500$. More precisely, we let $x_i=1+i/1000$ if $n$ is odd, and $x_i=2+i/1000$ if $n$ is even. Since $f_n(s)$ and $h(s)$ are decreasing, $f_n\left(x_{i}\right)/h\left(x_{i+1}\right)$ is an upper bound for $f_n(s)/h(s)$ for all $s \in [x_i, x_{i+1}]$. Computing the maximum such bound over all intervals $[x_i,x_{i+1}]$ to 2 decimal places (rounded up) then gives the values for $c_n$ in Table \ref{tab:cn1} and thereby proves Lemma \ref{lemma:fsmall}.
\subsection{Bounds relating to $h(s)$}
\label{AA}
In this section we prove some useful bounds relating to $h(s)$. Compared to Nathanson, we split our results into more regions for $s$. This piecewise approach ultimately yields better values for $C_1(\varepsilon)$ and $C_2(\varepsilon)$ in Table~\ref{tab:fF}. 

To begin with, we give the following lemma, which readily follows from the definition \eqref{eq-def-h(s)} of $h(s)$.
\begin{lemma}\label{lem:hsm1}
    Let $\gamma_3=4e/3$ and for any $2\le s_0\le 2.8$, let $\gamma_{s_0}=e^{s_0-2}$. Then
    \begin{align}
        h(s-1)&\le
        \begin{cases}
            \gamma_{s_0}\cdot h(s),&\text{if}\ 2\le s\le s_0+0.2,\\
            \gamma_3\cdot h(s),&\text{if}\ s\ge 3.\\
        \end{cases}
        \label{eq:h11}
    \end{align}
    Moreover, for $1\le s\le 3$
    \begin{equation}
        \frac{3}{s}h(2)\le 3h(s).\label{eq:h12}
    \end{equation}
\end{lemma}
Next, for $s\ge 2$, we define
\begin{equation}
\label{eq:H11}
H(s):=\int_s^{\infty} h(t-1) \mathrm{d}t \quad\text{and} \quad\alpha:=\frac{H(2)}{2h(2)}=\frac{e^2H(2)}{2}=0.96068\ldots.
\end{equation}
The following lemma is then an improved version of \cite[Lemma 9.6]{Nathanson} which bounds $H(s)$ in different ranges.
\begin{lemma}
\label{lemma:forg}
Let $\kappa_{2} = 0.9607$, $\kappa_{2.2} = 0.9557$, $\kappa_{2.4} = 0.9457$, $\kappa_{2.6} = 0.9261$, $\kappa_{2.8} = 0.8914$ and $\kappa_3 = 0.8349$. Then for $s_0 \in \{2, 2.2, 2.4, 2.6, 2.8,3\}$, we have
\begin{align}
    H(s)&\le \kappa_{s_0}\cdot s\cdot h(s),&\text{if}\ s\ge s_0. \label{eq: H(s)-upper}
\end{align}
In addition, for $\tilde{\kappa}=0.9214$,
\begin{align}    
    H(3)&\le \tilde{\kappa}\cdot s\cdot h(s),&\text{if}\ 1\le s\le 3.\label{eq: H(3)-upper}
\end{align}
\end{lemma}
\begin{proof}
We start by proving \eqref{eq: H(s)-upper}. Firstly, for $s \ge 4$, we show the stronger bound 
\begin{equation*}
    H(s)\le 0.81\cdot s\cdot h(s).
\end{equation*}
This follows from a repeated application of integration by parts:
\begin{align*}
    \frac{1}{3}H(s)&=\int_{s-1}^{\infty} \frac{e^{-t}}{t} \mathrm{d}t\\
    &= e^{1-s} \left( \frac{1}{s-1} - \frac{1}{(s-1)^2} + \frac{2}{(s-1)^3} \right) - \int_{s-1}^\infty\frac{6e^{-t}}{t^4}\mathrm{d}t\\ 
    &\le e \left( \frac{1}{3} - \frac{1}{3^2} + \frac{2}{3^3} \right) e^{-s}\\
    &\le 0.81 e^{-s}.
\end{align*}
That is,
\begin{equation*}
    H(s)\le 0.81\cdot 3e^{-s}=0.81\cdot s\cdot h(s)
\end{equation*}
as claimed. Next we consider $3\le s\le 4$. In this case
\begin{align*}
    \frac{1}{3}e^{s}H(s)&=e^{s}\left(\frac{1}{3}\int_{s-1}^3e^{-t}\mathrm{d}t+\int_{3}^\infty\frac{e^{-t}}{t}\mathrm{d}t\right)\\
    &\le \frac{1}{3}\left(e-e^{s-3}\right)+0.01305\cdot e^{s},
\end{align*}
which is maximised at $s=3$. Thus, 
\begin{equation*}
    \frac{1}{3}e^{-s}H(s)\le\frac{1}{3}\left(e-1\right)+e^3\cdot 0.01305\le 0.8349,
\end{equation*}
so that $H(s)\le \kappa_3\cdot s\cdot h(s)$ as required.

In the case $2\le s\le 3$, we have $h(s) = e^{-s}$ and 
\begin{align*}
    H(s) = \int_{s-1}^2 e^{-2} \mathrm{d}t + \int_{2}^3 e^{-t} \mathrm{d}t + 3 \int_{3}^{\infty} e^{-t} t^{-1} \mathrm{d}t = (4-s) e^{-2} - e^{-3} + 3\int_{3}^{\infty} e^{-t} t^{-1} \mathrm{d}t,
\end{align*}
hence
\begin{align}\label{Hs23eq}
    \frac{e^s}{s} H(s) = \frac{e^s}{s} \left((4-s) e^{-2} - e^{-3} + 3\int_{3}^{\infty} e^{-t} t^{-1} \mathrm{d}t\right).
\end{align}
Standard calculus arguments reveal that the right-hand side of \eqref{Hs23eq} is decreasing. Hence, for every $s_0 \in \{2, 2.2, 2.4, 2.6, 2.8\}$ we can substitute $s_0$ into \eqref{Hs23eq} to obtain that $H(s)\le \kappa_{s_0}\cdot s\cdot h(s)$ for $s \ge s_0$.

Finally, let us prove \eqref{eq: H(3)-upper}. If $2\le s\le 3$,
\begin{equation*}
H(3)=H(2)-e^{-2}=(2\alpha-1)e^{-2}\le \frac{e(2\alpha-1)}{3}s\cdot h(s),
\end{equation*}
since $se^{-s}$ is decreasing for $s\ge 1$. Then, if $1\le s\le 2$,
\begin{equation*}
H(3)=(2\alpha-1)e^{-2}\le (2\alpha-1)sh(s).
\end{equation*}
The desired result then follows upon noting that
\begin{equation*}
    \max\left\{\frac{e(2\alpha-1)}{3},2\alpha-1\right\}=2\alpha-1\le 0.9214= \tilde{\kappa}.\qedhere
\end{equation*}
\end{proof}

\begin{remarks}\ 
\begin{enumerate}
    \item[1.] In \cite[Lemma 9.7]{Nathanson}, the bound $H(s)\le \alpha\cdot s\cdot h(s)$ is used to give $c_n=\alpha\le 0.9607$. That is, Lemma \ref{lemma:f_n}.
    \item[2.] One could further split the regions of $s$ for which we bound $h(s-1)$ and $H(s)$. This would slightly improve our final numerics, but further complicate our ensuing arguments. We have chosen not to pursue such an optimisation as it is unlikely to have any impact on our final application to Chen's theorem.
\end{enumerate}
\end{remarks}

\subsection{Explicit version of the linear sieve}
\label{ELS}
We now list some more definitions related to the linear sieve. Let $\mathbb{P}$ be a set of primes and $g(d):\mathbb{N}\rightarrow \mathbb{C}$ a multiplicative function. For $2\le z \le D$, with $D \in \mathbb{R}^+$, we define $V(z)$ as in \eqref{vzdef} and let
\begin{equation*} \label{eq: def-y_n}
    y_n=y_n( D, p_1, \ldots, p_n):=\left( \frac{D}{p_1 \ldots p_n}\right)^{\frac{1}{2}}.
\end{equation*}
We wish to obtain an upper bound for
\begin{equation} \label{eq: def-T-n-D-z}
T_n(D,z):= \sum_{\substack{p_1 \ldots p_n \in \mathbb{P}\\y_n\le p_n < \ldots<p_1<z\\p_m<y_m\forall m<n,~m\equiv n\: (\text{mod }2)}}g(p_1\ldots p_n)V(p_n).
\end{equation} 
In particular, an upper bound on $T_n(D,z)$ is the core ingredient used to obtain the lower and upper bounds on $S(A,\mathbb{P},z)$ in Theorem \ref{theo:JR}. To estimate $T_n(D,z)$ we will utilise our bounds on $f_n(s)$, $h(s)$ and $H(s)$ obtained in the previous sections.

Our main lemma is as follows, which improves on \cite[Theorem 9.5]{Nathanson}. Compared to the work of Nathanson, we obtain a significant improvement by reducing the uniformity in the parameters, which improves the overall accuracy in the induction step.
\begin{lemma}
\label{lemma:expf_n}
Let $z\ge 2$, and $D > 0$ be real such that 
\begin{equation*} 
s:=\frac{\log D}{\log z}\ge 
\begin{cases} 
1 ~\text{if} ~ n  ~\text{is odd,} \\
2 ~\text{if}~n ~\text{is even} .
\end{cases} 
\end{equation*}
Let $\mathbb{P}$ be a set of primes and $g(d)$ be a multiplicative function such that
\begin{equation*}
0 \le g(p) < 1 ~\text{for all} ~p \in \mathbb{P}
\end{equation*} 
and
\begin{equation}
\label{eq:prod11}
\frac{V(u)}{V(z)}:=\prod_{\substack{p\in \mathbb{P}\\ u \le p < z}}(1-g(p))^{-1} \le K \frac{\log z}{\log u},
\end{equation}
for all $u$ such that $1 < u < z$ and $K$ such that
\begin{equation}
\label{eq:epsilonK}
1< K < 1+\varepsilon
\end{equation}
for some choice of $\varepsilon>0$. 
Then
\begin{equation*}
T_n(D,z)<V(z)\left( f_n(s)+\varepsilon\tau_n e^2h(s)\right),
\end{equation*}
where $\tau_1=3$ and for $n\ge 2$
\begin{equation} 
\label{eq:tau}
\tau_n:=
\begin{cases} 
    \tau_{n-1}\cdot \max\{\xi_2,\xi_{2.2},\xi_{2.4},\xi_{2.6},\xi_{2.8},\xi_3\} & \text{if}~n\ \text{even}, \\
    \tau_{n-1}\cdot\max\{\xi_3,\tilde{\xi}\} & \text{if}~n\ \text{odd},
\end{cases} 
\end{equation}
and
\begin{align*}
    \xi_{s_0}&:=\kappa_{s_0}+\left(\gamma_{s_0}+\kappa_{s_0}\right)\varepsilon+2\gamma_{s_0}\frac{(c_{n-1})^{n-2}}{\tau_{n-1}}+2\frac{(c_n)^{n-1}}{\tau_{n-1}}\\
    \tilde{\xi}&:=\tilde{\kappa}+(5+4\varepsilon)\varepsilon+6(1+\varepsilon)\frac{(c_{n-1})^{n-2}}{\tau_{n-1}}+2(2+\varepsilon)\frac{(c_n)^{n-1}}{\tau_{n-1}}
\end{align*}
with $\gamma_{s_0}$, $\kappa_{s_0}$ and $\tilde{\kappa}$ as defined in Lemmas \ref{lem:hsm1} and \ref{lemma:forg}, and $c_n$ as defined in Section \ref{introsub}. 
\end{lemma}
\begin{proof}
We start by defining 
\begin{equation}
    \label{eq:hnf}
    h_n(s):=\varepsilon\tau_n e^2h(s).
\end{equation}
We thus want to prove 
\begin{equation}
\label{eq:T_n}
    T_n(D,z)<V(z)\left( f_n(s)+h_n(s)\right).
\end{equation}
We proceed by induction on $n$. Let $n=1$. By \cite[Lemma 9.3]{Nathanson} with $\beta =2$, we have $T_1(D,z)=0$ for $s>3$. Since the right-hand side of \eqref{eq:T_n} is positive, it follows that the inequality holds for $s>3$. If $1\le s \le 3$ then $sf_1(s)=3-s$ and 
\begin{equation*}
T_1(D,z)=V(D^{1/3})-V(z),
\end{equation*}
by \cite[(9.13)]{Nathanson}. Hence, using \eqref{eq:prod11},
\begin{equation*}
\frac{T_1(D,z)}{V(z)}\le \left(\frac{3}{s}-1\right)+\frac{3}{s}(K-1)<f_1(s)+h_1(s).
\end{equation*}
This proves the lemma for $n=1$. Now, let $n\ge 2$ and assume that the lemma holds for $n-1$. We begin with the case where $s\ge 3$. Using \cite[Lemma 9.8]{Nathanson} and the induction hypothesis for $n-1$ as done in \cite[Theorem 9.5]{Nathanson}
\begin{align}\label{Tneq}
    \frac{T_n(D,z)}{V(z)}< & (K-1)(f_{n-1}(s-1)+h_{n-1}(s-1)) \\&+\frac{K}{s}\int_s^{\infty} (f_{n-1}(t-1)+h_{n-1}(t-1))\mathrm{d}t.\notag
\end{align}
We now bound each term in \eqref{Tneq} in terms of $h_{n-1}(s)$. Firstly, by \eqref{eq:h11} and \eqref{eq:fh},
\begin{align}
\label{eq:Tn3}
\begin{split}
(K-1)f_{n-1}(s-1)&< \varepsilon\cdot 2e^2(c_{n-1})^{n-2}h(s-1)\\&\le \varepsilon\cdot 2\gamma_3\cdot e^2(c_{n-1})^{n-2}h(s)=2\gamma_3\frac{(c_{n-1})^{n-2}}{\tau_{n-1}} h_{n-1}(s).\end{split}
\end{align}
Then, again by \eqref{eq:h11}
\begin{equation}\label{eq:Tn4}
    (K-1)h_{n-1}(s-1)<\varepsilon\cdot\gamma_3\cdot h_{n-1}(s).
\end{equation}
Next, by \eqref{eq:f22} we have
\begin{equation*}
\frac{K}{s}\int_s^{\infty} f_{n-1}(t-1)\mathrm{d}t=Kf_n(s).
\end{equation*}
To express this in terms of $h_{n}(s)$, we note that by \eqref{eq:fh}
\begin{equation*}
    (K-1)f_n(s)<\varepsilon\cdot 2e^2(c_{n})^{n-1}h(s)=2 \frac{(c_{n})^{n-1}}{\tau_{n-1}}h_{n-1}(s)
\end{equation*}
so
\begin{equation}\label{eq:Tn1}
Kf_n(s)< f_n(s)+2 \frac{(c_{n})^{n-1}}{\tau_{n-1}}h_{n-1}(s).
\end{equation}
Finally, by the definition of $H(s)$ and Lemma~\ref{lemma:forg},
\begin{equation*}
    \int_s^{\infty} h(t-1)\mathrm{d}t=H(s)\le \kappa_3\cdot s\cdot h(s),
\end{equation*}
and thus 
\begin{equation}
\label{eq:Tn2}
    \frac{K}{s}\int_s^{\infty} h_{n-1}(t-1)\mathrm{d}t\le \kappa_3 K h_{n-1}(s)<\kappa_3h_{n-1}(s)+\varepsilon\cdot\kappa_3 h_{n-1}(s).
\end{equation}
Combining \eqref{eq:Tn3}, \eqref{eq:Tn4}, \eqref{eq:Tn1} and \eqref{eq:Tn2} 
\begin{align*}
    \frac{T_n(D,z)}{V(z)}<f_n(s)+\xi_3\cdot h_{n-1}(s)\le f_n(s)+h_n(s),
\end{align*}
as required. The case for $n\ge 2$ even and $2\le s\le 3$ is similar. In particular, for any $s_0\in\{2,2.2,2.4,2.6,2.8\}$ repeating the above argument gives
\begin{equation*}
    \frac{T_n(D,z)}{V(z)}<f_n(s)+\xi_{s_0}\cdot h_{n-1}(s)
\end{equation*}
if $s_0\le s\le s_0+0.2$. Now, let $n\ge 3$ be odd and $1 \le s \le 3$. In this case, the recursion formula (cf.\ Equation \ref{Tneq}) is different and, following the proof of\footnote{Note that here we fix an error of Nathanson's as he incorrectly claimed $T_n(D,z)<V(z)(f_n(3)+h_n(3))$, which would contradict the optimality of the linear sieve (see e.g.\ \cite[Section 12.3]{Opera}).} \cite[Theorem 9.5]{Nathanson}, one obtains
\begin{align}
    T_n(D,z)&< (K-1)V(D^{1/3})(f_{n-1}(2)+h_{n-1}(2))\notag\\
    &\qquad\qquad+\frac{KV(D^{1/3})}{3}\int_{3}^{\infty}(f_{n-1}(t-1)+h_{n-1}(t-1))\mathrm{d}t\notag\\
    &\le \frac{3K}{s}(K-1)V(z)(f_{n-1}(2)+h_{n-1}(2))\notag\\
    &\qquad\qquad+\frac{K^2V(z)}{s}\int_{3}^{\infty}(f_{n-1}(t-1)+h_{n-1}(t-1))\mathrm{d}t,
\end{align}
where we have used that 
\begin{equation*}
    V(D^{1/3})\le\frac{3K}{s}V(z)
\end{equation*}
by \eqref{eq:prod11}. One now argues similarly as before, using \eqref{eq:h12} and \eqref{eq: H(3)-upper} to deduce that
\begin{align*}
    \frac{3K}{s}(K-1)f_{n-1}(2)&<6(1+\varepsilon)\frac{(c_n)^{n-2}}{\tau_{n-1}}h_{n-1}(s),\\
    \frac{3K}{s}(K-1)h_{n-1}(2)&<3\varepsilon(1+\varepsilon)h_{n-1}(s),\\
    \frac{K^2}{s}\int_{3}^\infty f_{n-1}(t-1)\mathrm{d}t&<f_n(s)+2(2+\varepsilon)\frac{(c_n)^{n-1}}{\tau_{n-1}}h_{n-1}(s),\\
    \frac{K^2}{s}\int_{3}^\infty h_{n-1}(t-1)\mathrm{d}t&<\tilde{\kappa}h_{n-1}(s)+\varepsilon(2+\varepsilon)\tilde{\kappa}h_{n-1}(s)
\end{align*}
and thus
\begin{equation*}
    \frac{T_{n}(D,z)}{V(z)}<f_n(s)+\tilde{\xi}h_{n-1}(s).
\end{equation*}
This completes the proof.
\end{proof}

As an example, in Table \ref{tab:tau} we report upper bounds for $\tau_n$ for $\varepsilon=1/200$ and $n\le 500$, obtained by Lemma~\ref{lemma:fsmall}. In particular, the upper bound for $\tau_n$ is computed recursively using \eqref{eq:tau} with $c_1=1$ and $c_n$ as in Table \ref{tab:cn1} for $2\leq n\leq 500$.

\begin{table}[H]
\centering
    \begin{tabular}{ | l | l |}
    \hline
    $n$ & $\tau_n $ \\  
    \hline
    $1$& $3$ \\
    \hline
    $2$& $8$ \\
    \hline
    $3-10$& $10$ \\
    \hline
    $11-13$& $9$ \\
    \hline
    $14-16$& $8$ \\
    \hline
    \end{tabular}
    \begin{tabular}{ | l | l | }
    \hline
    $n$ & $\tau_n $\\  
    \hline
    $17-20$& $7$ \\
    \hline
    $21-24$ & $6$ \\
    \hline
    $25-30$ & $5$ \\
    \hline
    $31-37$ & $4$ \\
    \hline
    $38-46$ & $3$ \\
    \hline
    \end{tabular}
    \begin{tabular}{ | l | l |}
    \hline
    $n$ & $\tau_n $ \\
    \hline
    $47-63$ & $2$ \\
    \hline
    $64-118$ & $1$ \\
    \hline
    $119-173$ & $10^{-1}$ \\
    \hline
    $174-228$ & $10^{-2}$ \\
    \hline
    $229-283$ & $10^{-3}$ \\
    \hline
    \end{tabular}
    \begin{tabular}{ | l | l |}
    \hline
    $n$ & $\tau_n $ \\  
    \hline
    $284-338$ & $10^{-4}$ \\
    \hline
    $339-393$ & $10^{-5}$ \\
    \hline
    $394-448$& $10^{-6}$ \\
    \hline
    $449-500$ &$10^{-7}$\\
    \hline
     &\\
    \hline
    \end{tabular}
\caption{Upper bound for $\tau_n$ for $\varepsilon=1/200$.}  
\label{tab:tau}  
\end{table}

Before continuing, we also provide an upper bound for $\tau_n$ that will be easier to work with when $n$ is large.
\begin{lemma}\label{lem: taup}
    Keep the notation of Lemma \ref{lemma:expf_n}. Let $\tau_n'$ be such that $\tau'_1=3$ and for $n\ge 2$
    \begin{equation} 
    \label{eq:taup}
        \tau'_n:=\tau'_{n-1}\left(\kappa_2+(\gamma_2+\kappa_2)\varepsilon+\frac{8e}{3}\frac{(c_{n-1})^{n-2}}{\tau'_{n-1}}+2(2+\varepsilon)\frac{(c_n)^{n-1}}{\tau'_{n-1}}\right) 
    \end{equation}
    Then, we have $\tau_n\le\tau'_n$ whenever $0<\varepsilon\le 1/74$.
\end{lemma}
\begin{proof}
    We proceed by induction. First note that $\tau_1=\tau'_1=3$ so that the result holds for $n=1$. Now suppose that $n\ge 2$ and $\tau_{n-1}\le\tau'_{n-1}$. To begin with, we note that
    \begin{equation*}
        \kappa_{s_0}+(\gamma_{s_0}+\kappa_{s_0})\varepsilon
    \end{equation*}
    is a linear function of $\varepsilon$. Thus, through elementary analysis one finds
    \begin{equation*}
        \max_{s_0\in\{2,\:2.2,\:2.4,\:2.6,\:2.8,\:3\}}\left\{\kappa_{s_0}+(\gamma_{s_0}+\kappa_{s_0})\varepsilon\right\}\le \kappa_2+(\gamma_2+\kappa_2)\varepsilon
    \end{equation*}
    provided $\varepsilon\le 1/54$. Also, since $\gamma_{s_0}\le\gamma_3=4e/3$ and $2\le 2(2+\varepsilon)$, one then has
    \begin{align}
        \tau_{n-1}\xi_{s_0}&=\tau_{n-1}\left(\kappa_{s_0}+\left(\gamma_{s_0}+\kappa_{s_0}\right)\varepsilon+2\gamma_{s_0}\frac{(c_{n-1})^{n-2}}{\tau_{n-1}}+2\frac{(c_n)^{n-1}}{\tau_{n-1}}\right)\notag\\
        &\le \tau'_{n-1}\left(\kappa_{2}+\left(\gamma_{2}+\kappa_{2}\right)\varepsilon+\frac{8e}{3}\frac{(c_{n-1})^{n-2}}{\tau'_{n-1}}+2(2+\varepsilon)\frac{(c_n)^{n-1}}{\tau'_{n-1}}\right)=\tau'_n\label{xis0new}
    \end{align}
    for all $s_0\in\{2, 2.2, 2.4, 2.6, 2.8, 3\}$. Similarly,
    \begin{equation*}
        \tilde{\kappa}+(5+4\varepsilon)\varepsilon\le\kappa_2+(\gamma_2+\kappa_2)\varepsilon
    \end{equation*}
    provided $\varepsilon\le 1/74$ so that
    \begin{align}
        \tau_{n-1}\tilde{\xi}&=\tau_{n-1}\left(\tilde{\kappa}+(5+4\varepsilon)\varepsilon+6(1+\varepsilon)\frac{(c_{n-1})^{n-2}}{\tau_{n-1}}+2(2+\varepsilon)\frac{(c_n)^{n-1}}{\tau_{n-1}}\right)\notag\\
        &\le\tau'_{n-1}\left(\kappa_2+(\gamma_2+\kappa_2)\varepsilon+\frac{8e}{3}\frac{(c_{n-1})^{n-2}}{\tau'_{n-1}}+2(2+\varepsilon)\frac{(c_n)^{n-1}}{\tau'_{n-1}}\right)=\tau'_n\label{xitnew}.
    \end{align}
    From \eqref{xis0new} and \eqref{xitnew}, it follows that $\tau_n\le \tau'_n$ as required.
\end{proof}

We can now effectively bound the upper and lower bound sieves constructed in \cite[Theorem 9.3]{Nathanson} for $S(A,\mathbb{P}, z)$ and improve on \cite[Theorem 9.6]{Nathanson}.
\begin{prop}
\label{theo:G}
Let $z$, $D$, $s$, $\mathbb{P}$, $g(d)$ and $\varepsilon$ satisfy the hypotheses of Lemma~\ref{lemma:expf_n}. Let
\begin{equation*}
G(z, \lambda^{\pm}):=\sum_{d|P(z)}\lambda^{\pm}(d)g(d),
\end{equation*}
with $\lambda^{\pm}(d)$ the upper and lower bound sieves for $S(A,\mathbb{P}, z)$ constructed in \cite[Theorem 9.3]{Nathanson}.
Then
\begin{equation*}
G(z, \lambda^{+})< V(z)\left(F(s)+\varepsilon e^2 h(s) \sum_{n=1}^{\infty}\tau_{2n-1} \right)
\end{equation*}
and 
\begin{equation*}
G(z, \lambda^{-})> V(z)\left(f(s)-\varepsilon e^2 h(s) \sum_{n=1}^{\infty}\tau_{2n} \right),
\end{equation*}
where $F(s)$ and $f(s)$ are defined in \eqref{def-f(s)-F(s)}, $h(s)$ is defined in \eqref{eq-def-h(s)} and $\tau_n$ is defined in \eqref{eq:tau}.
\end{prop}
\begin{proof}
    By \cite[Lemma 9.3]{Nathanson}, we have
    \begin{equation*}
        G(z,\lambda^+)=V(z)+\sum_{\substack{n=1\\ n\equiv 1\ \text{(mod $2$)}}}^\infty T_n(D,z)
    \end{equation*}
    and 
    \begin{equation*}
        G(z,\lambda^-)=V(z)-\sum_{\substack{n=1\\ n\equiv 0\ \text{(mod $2$)}}}^\infty T_n(D,z).
    \end{equation*}
The proof then follows upon applying our upper bound for $T_n(D,z)$ in Lemma~\ref{lemma:expf_n}, and the definitions \eqref{bigFdeffn} and \eqref{lilFdeffn} of $F(s)$ and $f(s)$ in terms of $f_n(s)$.
\end{proof}

 We now obtain a bridging result which allows us to make Proposition \ref{theo:G} explicit.

\begin{lemma}\label{lemma:exprec}
    Let $\tau_n$ and $\tau'_n$ be as defined in \eqref{eq:tau} and \eqref{eq:taup} respectively. For some choice of $\varepsilon\in(0,1/74]$ and any $k_e,k_o\ge 1$, we have
    \begin{equation}\label{C1sum}
        C_1(\varepsilon):=\sum_{n=1}^{\infty}\tau_{2n-1} \le\sum_{n=1}^{k_o}\tau_{2n-1}+\tau'_{2k_o}\sum_{n=1}^\infty J(2k_o+1)^{2n-1}
    \end{equation}
    and
    \begin{equation}\label{C2sum}
        C_2(\varepsilon):=\sum_{n=1}^{\infty}\tau_{2n} \le\sum_{n=1}^{k_e}\tau_{2n}+\tau'_{2k_e+1}\sum_{n=1}^\infty J(2k_e+2)^{2n-1},
    \end{equation}
    where
    \begin{equation*}
        J(k):=\kappa_2+\left(\gamma_2+\kappa_2\right)\varepsilon+\frac{8e}{6}\left(\frac{\kappa_2}{\kappa_2+\left(\gamma_2+\kappa_2\right)\varepsilon}\right)^{k-2}+\frac{2(2+\varepsilon)}{3}\frac{\kappa_2^{k-1}}{(\kappa_2+\left(\gamma_2+\kappa_2\right)\varepsilon)^{k-2}}.
    \end{equation*}
\end{lemma}
\begin{proof}
    First we note that since $\kappa_2+(\gamma_2+\kappa_2)\varepsilon<1$ for $\varepsilon<1/74$, and
    \begin{equation*}
        \frac{\kappa_2}{\kappa_2+(\gamma_2+\kappa_2)\varepsilon}<1,
    \end{equation*}
    it follows that $|J(k)|<1$ for sufficiently large $k$ and the infinite sums in \eqref{C1sum} and \eqref{C2sum} converge.
    
    Now, we will only prove the inequality in \eqref{C1sum} since \eqref{C2sum} follows in an identical fashion. So, to begin with, we use Lemma \ref{lem: taup} to obtain
    \begin{equation}\label{tau2eq}
        \sum_{n=1}^\infty\tau_{2n-1}=\sum_{n=1}^{k_o}\tau_{2n-1}+\sum_{n=k_o+1}^\infty\tau_{2n-1}\le \sum_{n=1}^{k_o}\tau_{2n-1}+\sum_{n=k_o+1}^\infty\tau'_{2n-1}.
    \end{equation}
    Next we note that $c_n\le\kappa_2=0.9607$ (Lemma \ref{lemma:f_n}) and by the definition \eqref{eq:taup} of $\tau'_n$, we have
    \begin{equation*}
        \tau'_n\ge 3\cdot(\kappa_2+(\gamma_2+\kappa_2)\varepsilon)^{n-1}.
    \end{equation*}
    Hence, again by \eqref{eq:taup},
    \begin{equation*}
        \frac{\tau'_n}{\tau'_{n-1}}\le J(n)
    \end{equation*}
    for all $n\ge 2$. This means that
    \begin{equation*}
        \tau'_{2k_o+1}=\tau'_{2k_o}\left(\frac{\tau'_{2k_o+1}}{\tau'_{2k_o}}\right)\le\tau'_{2k_o}J(2k_o+1),    
    \end{equation*}
    and, since $J(k)$ is decreasing in $k$, we have by induction
    \begin{equation*}
        \tau'_{2k_o+2n-1}\le\tau'_{2k_o}J(2k_o+1)^{2n-1}.
    \end{equation*}
    Substituting this into \eqref{tau2eq} then gives \eqref{C1sum} as required.
\end{proof}
We can now conclude the proof of Theorem \ref{theo:JR} using the above machinery.
\begin{proof}[Proof of Theorem \ref{theo:JR}]
    The proof of Theorem \ref{theo:JR} is the same as \cite[Theorem 9.7]{Nathanson} but with our bounds for $C_1(\varepsilon)$ and $C_2(\varepsilon)$ from Lemma \ref{lemma:exprec}. To obtain the values in Table \ref{tab:fF} we first choose $k_e=250$ and $k_o=249$ and then use Lemmas \ref{lemma:fsmall}, \ref{lemma:f_n}, \ref{lemma:expf_n} and \ref{lem: taup} to iteratively compute $\tau_n$ for $n\le 500$ and $\tau'_n$ for $n\le 501$. Finally, to bound the infinite series in \eqref{C1sum} and \eqref{C2sum}, we first compute $J(2k_e+2)$ and $J(2k_o+1)$ and then evaluate the sum as a geometric series.
\end{proof}
\begin{remark}
    The restriction $\varepsilon\leq 1/74$ in Theorem \ref{theo:JR} is required to prove the intermediary result Lemma \ref{lem: taup}. It appears difficult to significantly weaken this restriction, especially since we also require 
    \begin{equation}\label{kappagameq}
        \kappa_2+(\gamma_2+\kappa_2)\varepsilon<1
    \end{equation}
    in the proof of Lemma \ref{lemma:exprec}. In particular, with $\gamma_2=e^{0.2}$ (Lemma \ref{lem:hsm1}) and $\kappa_2=0.9607$ (Lemma \ref{lemma:forg}), the inequality \eqref{kappagameq} necessitates that $\varepsilon<1/55$.
\end{remark}

\subsection{Explicit bounds for $\varepsilon$ in \eqref{eq:cond1}} \label{subsec: expl-epsilon}
For our applications of Theorem \ref{theo:JR} we will choose $g_n$ such that $g_n(p)=\frac{1}{p-1}$. With this choice, we now prove a series of lemmas that will be used to give a value of $\varepsilon$ in \eqref{eq:cond1}.
\begin{lemma}\label{betrandlem}
    For all $x\ge \exp(20)$, there exists a prime in the interval $[0.999x,x)$.
\end{lemma}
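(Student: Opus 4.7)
The plan is to invoke an explicit version of Bertrand's postulate in short intervals. A convenient choice is Dusart's theorem, which guarantees that for every $y$ above a modest explicit threshold $y_{0}$ (well below $\exp(20)$), the interval $[y,\,y(1+\tfrac{1}{25\log^{2}y})]$ contains a prime. The lemma then reduces to the numerical check that, with $y:=0.999x$, this short interval fits strictly inside $[0.999x,\,x)=[y,\,y/0.999)$.

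Concretely, set $y=0.999x$. Since $x\ge\exp(20)$, one has $y\ge 0.999\exp(20)\gg y_{0}$, so Dusart's theorem yields a prime $p\in[y,\,y(1+\tfrac{1}{25\log^{2}y})]$. Moreover $\log y\ge 20+\log(0.999)>19.999$, whence
\[
\frac{1}{25\log^{2}y}\le\frac{1}{25\cdot (19.999)^{2}}<1.001\times 10^{-4}<10^{-3}<\frac{1}{0.999}-1.
\]
Therefore $p<y/0.999=x$, while trivially $p\ge y=0.999x$, so $p\in[0.999x,\,x)$ as required.

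There is no real analytic obstacle here; the proof is purely a matter of selecting an explicit short-interval prime theorem whose error term is comfortably below $10^{-3}$ at $y\approx\exp(20)$ and whose range of validity begins well below $\exp(20)$. Dusart's bound above meets both needs with huge room to spare. A direct attempt to deduce the claim from subtracting explicit bounds of the form $\pi(x)\ge x/(\log x-1)$ and $\pi(0.999x)\le 0.999x/(\log(0.999x)-1.1)$ would \emph{not} succeed at $\log x=20$, since these estimates only give $\pi(x)>\pi(0.999x)$ once $\log x\gtrsim 100$; this is why a dedicated short-interval theorem is the right tool. Alternatives such as Schoenfeld's bound (a prime in $[y,y(1+1/16597)]$ for $y\ge 2{,}010{,}760$) would serve equally well, as $1/16597<10^{-3}$.
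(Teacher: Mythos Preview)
Your proof is correct. Dusart's theorem (e.g.\ his 2010 bound: a prime in $[y,\,y(1+\tfrac{1}{25\log^{2}y})]$ for all $y\ge 396738$) applies since $0.999\exp(20)\approx 4.85\times 10^{8}$ exceeds the threshold, and your numerical check that $1/(25\cdot 19.999^{2})<10^{-3}<1/0.999-1$ is sound. It would strengthen the write-up to state the threshold $y_{0}$ and cite the specific Dusart paper explicitly rather than leaving $y_{0}$ implicit.

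This differs from the paper's argument, which splits into two ranges: for $\exp(20)\le x\le 4\cdot 10^{18}$ it appeals to the computed maximal prime gaps from Oliveira e Silva--Herzog--Pardi, and for $x>4\cdot 10^{18}$ it invokes the short-interval results of Kadiri--Lumley. Your route is cleaner, covering the full range with a single citation and no case split; the paper's choice may simply reflect reusing references already present in its bibliography. Either approach works with enormous margin---as you note, the relevant short-interval widths are orders of magnitude below $10^{-3}$ at this height.
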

\begin{proof}
    For $\exp(20)\le x\le 4\cdot 10^{18}$ we use the results on gaps between primes in \cite[Table 8]{O_H_P_14}. For $x>4\cdot 10^{18}$, we use \cite[Table 2]{Kadiri2}.
\end{proof}

\begin{lemma}\label{mertencomplem}
    For all $2\le x\le 10^{12}$, we have
    \begin{equation*}
        \log\log x+M<\sum_{p\le x}\frac{1}{p}<\log\log x+M+\frac{2}{\sqrt{x}\log x},
    \end{equation*}
    with
    \begin{equation}
    \label{eq:MM}
        M:=\lim_{x \rightarrow \infty} \left(\sum_{p\le x}\frac{1}{p}-\log\log x\right)=0.26149 72128 47643\ldots.
    \end{equation}
\end{lemma}
\begin{proof}
    The proof of the lemma is by direct computation. The computation took just over 15 hours on an Intel Core i7 3.00GHz processor. To begin with, we used the primesieve package in Python to compute all the primes up to $10^8$, and used these primes to directly verify the lemma up to $x=10^8$. This process was then repeated for all primes $p$ satisfying $10^8<p\le 2\cdot 10^8$, and then similarly for intervals of length $10^8$ until we covered all primes up to $10^{12}$. Note that were unable to store all primes up to $10^{12}$ in one go due to the limited memory on our computer.
\end{proof}
\begin{remark}
    Lemma \ref{mertencomplem} extends a computation due to Rosser and Schoenfeld \cite[Theorem 20]{R-S} by a factor of $10^4$.
\end{remark}

\begin{lemma}\label{mertenlem}
    For all $x\ge 2$, with $M$ defined in \eqref{eq:MM}, we have
    \begin{equation}\label{small1p}
        \sum_{p\le x}\frac{1}{p}\ge\log\log x+M-\frac{2.964\cdot 10^{-6}}{\log x},
    \end{equation}
    and for all $x>\exp(4000)$, we have
    \begin{equation}\label{big1p}
        \sum_{p\le x}\frac{1}{p}\le\log\log x+M+\frac{1.436\cdot 10^{-16}}{\log x}.
    \end{equation}
\end{lemma}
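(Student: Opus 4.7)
The plan is to split on the size of $x$ and combine the direct computation of Lemma~\ref{mertencomplem} with Abel summation together with explicit estimates on $\theta(x)-x$.

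For the lower bound \eqref{small1p} on the range $2 \le x \le 10^{12}$, Lemma~\ref{mertencomplem} already supplies the sharper inequality $\sum_{p\le x}1/p > \log\log x + M$. Since $\sum_{p<x}1/p \ge \sum_{p\le x}1/p - 1/x$, with the deficit occurring only when $x$ is itself prime, it suffices to verify $1/x \le 2.964\cdot 10^{-6}/\log x$. This holds for all $x$ above a threshold of size roughly $10^{7}$, while the remaining small window can be dispatched by a short finite check at each prime, using that $\sum_{p<x}1/p$ is piecewise constant and the right-hand side of \eqref{small1p} is continuous.

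For the remaining ranges I would use the classical identity obtained by writing $\sum_{p\le x}1/p = \int_{2^-}^x d\theta(t)/(t\log t)$, substituting $d\theta(t) = dt + d(\theta(t)-t)$, and integrating the second piece by parts:
\begin{equation*}
\sum_{p\le x}\frac{1}{p} - \log\log x - M = \frac{\theta(x)-x}{x\log x} - \int_x^\infty (\theta(t)-t)\,\frac{\log t + 1}{t^2\log^2 t}\,dt,
\end{equation*}
the Mertens constant $M$ itself emerging from a companion integral representation as $x\to\infty$. Plugging in an explicit estimate $|\theta(t)-t| \le \eta(t)\,t$ with $\eta$ decreasing, the boundary term contributes at most $\eta(x)/\log x$. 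The tail integral needs more care, since the weight $\int_x^\infty (\log t+1)/(t\log^2 t)\,dt$ alone diverges (its antiderivative is $\log\log t - 1/\log t$); convergence is restored by the superpolynomial decay of $\eta$ coming from the explicit zero-free region of $\zeta$, and a substitution of the type $u=\sqrt{\log t}$ bounds it by a constant multiple of $\eta(x)/\log x$.

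For the lower bound on $x > 10^{12}$ a modest explicit PNT bound, for instance of the type found in \cite{Kadiri2}, comfortably yields an error below $2.964\cdot 10^{-6}/\log x$. For the upper bound on $x > \exp(20000)$ one uses the sharpest available explicit $\eta(t)$ coming from the known zero-free region of $\zeta$: in that regime $\eta(x)$ is of order at most $10^{-40}$, producing the claimed constant $5.02\cdot 10^{-40}$. The main obstacle will be the careful book-keeping required to force those particular numerical constants to come out exactly. In practice this is likely to require splitting the tail integral at several intermediate thresholds and substituting the sharpest available explicit bound on $\eta(t)$ in each sub-range, rather than relying on any single uniform estimate; the analytic framework is routine, but the numerical tracking is delicate.
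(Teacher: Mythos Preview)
Your overall plan---Lemma~\ref{mertencomplem} for small $x$, then the Rosser--Schoenfeld identity \cite[(4.20)]{R-S}
\[
\sum_{p\le x}\frac{1}{p}=\log\log x+M+\frac{\theta(x)-x}{x\log x}+\int_x^\infty\frac{(y-\theta(y))(1+\log y)}{y^2\log^2y}\,\mathrm{d}y
\]
with explicit bounds on $\theta$---matches the paper, and for \eqref{big1p} your description coincides with the paper's one-line argument (insert the entry of \cite[Table~15]{Broad} at $\exp(20000)$).

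For \eqref{small1p} on $x>10^{12}$, however, there is a missing ingredient. You propose to control everything via two-sided estimates $|\theta(t)-t|\le\eta(t)\,t$; the paper instead exploits the \emph{sign} information $y-\theta(y)>0$ on $y\le 10^{19}$, using \cite[Theorem~2]{Buthe3} (in fact $y-\theta(y)\ge 0.05\sqrt{y}$). This turns the integral over $[x,10^{19}]$ into a \emph{positive} contribution to the lower bound, so only the tail beyond $10^{19}$ needs an absolute-value estimate. The distinction is decisive at $x=10^{12}$: the boundary term (via the Broadbent bound $6.9322\cdot10^{-5}/\log^2 x$) already eats about $85\%$ of the $2.964\cdot10^{-6}/\log x$ budget, the tail beyond $10^{19}$ costs roughly $2\cdot10^{-8}$ more, and it is the small positive piece from $[x,10^{19}]$ that tips the balance. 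If instead you bound $\bigl|\int_x^{10^{19}}\cdots\bigr|$ with any available $\eta(t)$, that piece alone contributes on the order of $10^{-7}$ or worse (since the true oscillation $|\theta(t)-t|/t$ near $t=10^{12}$ is genuinely $\sim 10^{-6}$), overwhelming the budget regardless of how finely you subdivide. So the absolute-value framework cannot reach the stated constant; the one-sided input $\theta(y)<y$ on the RH-verified range is essential here, and you should incorporate it.

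Your observation about $\sum_{p<x}$ versus $\sum_{p\le x}$ is a fair nitpick; the paper silently identifies the two, but the discrepancy is absorbed either by the slack in Lemma~\ref{mertencomplem} or by the short finite check you describe.
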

\begin{proof}
    By \cite[(4.20)]{R-S} we have
    \begin{equation}\label{1pinteq}
        \sum_{p\le x}\frac{1}{p}=\log\log x+M+\frac{\theta(x)-x}{x\log x}+\int_x^\infty\frac{(y-\theta(y))(1+\log y)}{y^2\log^2y}\mathrm{d}y,
    \end{equation}
    where 
    \begin{equation}
    \label{eq:theta}
      \theta(x)=\sum_{p\le x}\log p,  
    \end{equation}
    is Chebyshev's theta function. To obtain \eqref{big1p}, we simply substitute into \eqref{1pinteq} the bound for $M_1$ in \cite[Table 15]{Broad}, such that
    $$ \theta(x)-x \le \frac{M_1 x}{\log x},$$
    corresponding to $x>\exp(4000)$, namely $M_1=5.7410\cdot 10^{-13}$. To prove \eqref{small1p} for $2\le x\le 10^{12}$ we use Lemma \ref{mertencomplem}. To prove \eqref{small1p} for $x>10^{12}$ we take much more care. 
    
    Firstly, by \cite[Table 15]{Broad}, the first error term in \eqref{1pinteq} can be bounded by
    \begin{equation*}
        \left|\frac{\theta(x)-x}{x\log x}\right|\le\frac{6.9322\cdot 10^{-5}}{\log^2x}.
    \end{equation*}
We now split into the cases $10^{12}<x\le 10^{19}$ and $x>10^{19}$. In the first case, we have
\begin{align*}
    \int_x^\infty\frac{(y-\theta(y))(1+\log y)}{y^2\log^2y}\mathrm{d}y&=\int_x^{10^{19}}\frac{(y-\theta(y))(1+\log y)}{y^2\log^2y}\mathrm{d}y\\
    &\qquad\qquad+\int_{10^{19}}^\infty\frac{(y-\theta(y))(1+\log y)}{y^2\log^2y}\mathrm{d}y.
\end{align*}
For the first integral, we use \cite[Theorem 2]{Buthe3} to obtain
\begin{align*}
    \int_x^{10^{19}}\frac{(y-\theta(y))(1+\log y)}{y^2\log^2y}\mathrm{d}y&\ge\int_x^{10^{19}}\frac{0.05(1+\log y)}{y^{3/2}\log^2y}\mathrm{d}y\\
    &=0.05\left[\frac{1}{2}~\text{li}\left(\frac{1}{\sqrt{y}}\right)-\frac{1}{\sqrt{y}\log y}\right]_x^{10^{19}}\\
    &\ge\frac{0.05}{\sqrt{x}\log x}-0.025~\text{li}\left(\frac{1}{\sqrt{x}}\right)-7.077\cdot 10^{-13},
\end{align*}
where
\begin{equation} \label{eq: def-li(x)}
    \text{li}(x) = \int_0^x \frac{\mathrm{d}t}{\log t},
\end{equation}
is the logarithmic integral function.
For the second integral we again use \cite[Table 15]{Broad} to obtain
\begin{equation*}
    \left|\int_{10^{19}}^\infty\frac{(y-\theta(y))(1+\log y)}{y^2\log^2y}\mathrm{d}y\right|\le 8.6315\cdot 10^{-7}\left(\frac{1}{2\log^2(10^{19})}+\frac{1}{\log(10^{19})}\right).
\end{equation*}
Thus, for $10^{12}<x\le 10^{19}$, we have an error term bounded below by
\begin{align*}
    &-\frac{6.9322\cdot 10^{-5}}{\log^2x}+\frac{0.05}{\sqrt{x}\log x}-0.025~\text{li}\left(\frac{1}{\sqrt{x}}\right)-7.077\cdot 10^{-13}\\
    &\qquad\qquad\qquad-8.6315\cdot 10^{-7}\left(\frac{1}{2\log^2(10^{19})}+\frac{1}{\log(10^{19})}\right)\\
    &\quad\ge-\frac{2.964\cdot 10^{-6}}{\log x}.
\end{align*}
For $x>10^{19}$ an even sharper bound is obtained by simply substituting into \eqref{1pinteq} the entry for $M_1$ in \cite[Table 15]{Broad} corresponding to $10^{19}$.
\end{proof}
We are now able to obtain explicit bounds for $\varepsilon$ in \eqref{eq:cond1}.
\begin{lemma} \label{lem: prod-u>u_0} Let $z>\exp(4000)$ and $u_0=10^9$. Then for all $u_0<u<z$, we have
    \begin{equation}\label{epsmainbound}
        \prod_{u\le p<z}\left(1-\frac{1}{p-1}\right)^{-1}<\left(1+1.452\cdot 10^{-7}\right)\frac{\log z}{\log u}.
    \end{equation}
\end{lemma}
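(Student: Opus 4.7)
The plan is to pass to logarithms and reduce the problem to Mertens-type bounds on $\sum_{u \le p < z} 1/p$, which are exactly what Lemma~\ref{mertenlem} supplies. Writing
\[
-\log\!\left(1 - \frac{1}{p-1}\right) = \frac{1}{p-1} + \sum_{k \ge 2}\frac{1}{k(p-1)^k}
\]
and $\frac{1}{p-1} = \frac{1}{p} + \frac{1}{p(p-1)}$, I would separate the logarithm of the product into the main term $\sum_{u \le p < z} 1/p$ plus tail contributions that decay like $\sum_{p \ge u} 1/p^2$.

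For the main term, I would combine the upper bound on $\sum_{p < z} 1/p$ from Lemma~\ref{mertenlem} (valid since $z > \exp(20000)$) with the lower bound on $\sum_{p < u} 1/p$ to obtain
\[
\sum_{u \le p < z}\frac{1}{p} \le \log\!\left(\frac{\log z}{\log u}\right) + \frac{2.964 \cdot 10^{-6}}{\log u} + \frac{5.02 \cdot 10^{-40}}{\log z}.
\]
The first slack term is dominant; the second is vanishingly small.

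For the tail contributions, I would bound $\sum_{p \ge u} 1/(p(p-1)) \le 1/(u-1)$ by integer comparison and similarly bound the higher-order Taylor tail by a comparable quantity. Since $u > 10^9$, both contributions are of order at most $10^{-9}$. Exponentiating the resulting bound on $\log \prod$ then produces the desired shape $(\log z/\log u)(1 + C/\log u)$, with $C$ collecting all the slack terms.

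The main obstacle will be the numerical bookkeeping required to verify $C \le 3.01 \cdot 10^{-6}$ uniformly in $u$. The headroom beyond the Mertens slack is $(3.01 - 2.964)\cdot 10^{-6}/\log u \approx 4.6 \cdot 10^{-8}/\log u$, which at the tightest regime $u \approx 10^9$ (where $\log u \approx 20.7$) is about $2.2 \cdot 10^{-9}$, comfortably above the tail contribution of order $1.5 \cdot 10^{-9}$ there. Because the tail decays like $1/u$ while the headroom decays only like $1/\log u$, the inequality will hold with increasing margin as $u$ grows, so only the boundary case near $u = 10^9$ requires an explicit numerical check.
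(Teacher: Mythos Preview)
Your approach is correct and follows essentially the same strategy as the paper: reduce to Mertens-type bounds on $\sum_{u\le p<z} 1/p$ via Lemma~\ref{mertenlem}, and treat the discrepancy between $1/(p-1)$ and $1/p$ as a tail of order $1/u$. Your numerical sketch is accurate; at the tightest point $u\approx 10^9$ the total slack is about $2.995\cdot 10^{-6}$, safely under $3.01\cdot 10^{-6}$.

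The only tactical difference is how the $1/(p-1)$ correction is handled. The paper first factorises
\[
\left(1-\frac{1}{p-1}\right)^{-1}=\left(1+\frac{1}{p(p-2)}\right)\left(1-\frac{1}{p}\right)^{-1},
\]
bounds the first factor via a Bertrand-type input (Lemma~\ref{betrandlem}) and an integer-comparison sum, and only then passes to logarithms for the second factor. You instead take logarithms of the original product directly and use the additive splitting $1/(p-1)=1/p+1/(p(p-1))$ together with the Taylor tail. Your route is slightly more economical in that it avoids invoking Lemma~\ref{betrandlem} altogether; the paper's route keeps the two sources of error (the $1/(p-1)$ shift and the exponentiation error) visibly separate. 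Either way the arithmetic lands in the same place.
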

\begin{proof}
    We first note that
    \begin{equation*}
        \prod_{u\le p<z}\left(1-\frac{1}{p-1}\right)^{-1}=\prod_{u\le p<z}\left(\frac{(p-1)^2}{p(p-2)}\right)\prod_{u\le p<z}\left(1-\frac{1}{p}\right)^{-1}.
    \end{equation*}
    By Lemma \ref{betrandlem}, noting that $\exp(20)<10^9$, we then have
    \begin{align*}
        \prod_{u\le p<z}\left(\frac{(p-1)^2}{p(p-2)}\right)&=\prod_{u\le p<z}\left(1+\frac{1}{p(p-2)}\right)\\
        &\le\prod_{0.999u\le p<z}\left(1+\frac{1}{p^2}\right)\\
        &\le 1+\sum_{n\ge 0.999u}\frac{1}{n^2}\le 1+\frac{1}{0.999u-1}.
    \end{align*}
    Thus,
    \begin{equation}\label{prodeq1}
        \prod_{u\le p<z}\left(1-\frac{1}{p-1}\right)^{-1}<\left(1+\frac{1}{0.999u-1}\right)\prod_{u\le p<z}\left(1-\frac{1}{p}\right)^{-1}.
    \end{equation}
    Next, we note that
    \begin{equation}\label{prodtoexp}
        \prod_{u\le p<z}\left(1-\frac{1}{p}\right)^{-1}=\exp\left(-\sum_{u\le p<z}\log\left(1-\frac{1}{p}\right)\right).
    \end{equation}
    Now, by Lemma \ref{mertenlem}
    \begin{align}\label{merteneq}
        \sum_{u\le p< z}\frac{1}{p}&=\sum_{p<z}\frac{1}{p}-\sum_{p<u}\frac{1}{p}\le\log\log z-\log\log u+\frac{1.436\cdot 10^{-16}}{\log z}+\frac{2.964\cdot 10^{-6}}{\log u}\\
        &=\sum_{p<z}\frac{1}{p}-\sum_{p<u}\frac{1}{p}\le\log\log z-\log\log u+1.431\cdot 10^{-7}
    \end{align}
    since $z>\exp(4000)$ and $u>u_0=10^9$. Hence, using \eqref{prodtoexp}, \eqref{merteneq} and that for $x\in(0,1/2]$,
    \begin{equation*}
        \log(1-x)\ge-x-x^2,\quad e^x\le 1+x+x^2,
    \end{equation*}
    we have,
    \begin{align}
        \prod_{u\le p<z}\left(1-\frac{1}{p}\right)^{-1}&\le\frac{\log z}{\log u}\exp\left(1.431\cdot 10^{-7}\right)\exp\left(\sum_{p\ge u}\frac{1}{p^2}\right)\notag\\
        &\le\frac{\log z}{\log u}\exp\left(1.431\cdot 10^{-7}\right)\exp\left(\sum_{n\ge u}\frac{1}{n^2}\right)\notag\\
        &\le\frac{\log z}{\log u}\exp\left(1.431\cdot 10^{-7}\right)\left(1+\frac{1}{u-1}+\frac{1}{(u-1)^2}\right).\label{prodeq2}
    \end{align}
    Using \eqref{prodeq1}, \eqref{prodeq2}, $u>u_0=10^{9}$, and merging to the term $\frac{\log z}{\log u}$, gives the desired result.
\end{proof}
Note that the range $u$ was chosen to be near optimal for the final computations in \S \ref{section:B}. It is also worth noting that the above result is one of the key numerical ingredients in the proof of Theorem~\ref{Theo:B.}. In particular, sharpening the bound \eqref{epsmainbound} is necessary if one wishes to obtain a substantial improvement to the range of $N$ in Theorem~\ref{Theo:B.}.

\section{Some useful lemmas}
\label{section:sur}
In this section, we introduce some general explicit results from the literature that will be highly useful. Here and throughout the rest of the paper, $\pi(x)$, $\theta(x)$ and $\psi(x)$ (and their generalisations) denote the standard prime counting functions, $\mu(n)$ denotes the Möbius function and $\varphi(n)$ denotes the Euler totient function. 

We begin by giving some explicit expressions for the sieving functions $f(s)$ and $F(s)$ defined in \eqref{def-f(s)-F(s)}. For a particular range of $s$, one can do this inductively using the definition of these functions. For our purposes, we will only need $0<s\le 4$ so the following lemma is restricted to this case only.
\begin{lemma}[{\cite[Lemma 2]{Cai1}}] \label{lem: estimetes-f-F}
    Let $f$ and $F$ be as defined in \eqref{def-f(s)-F(s)}. Then,
    \begin{align}
        F(s)&=\frac{2e^{\gamma}}{s}, ~&0 < s \le 3,\label{eq: approx-F03}\\
        F(s)&=\frac{2e^{\gamma}}{s} \left( 1 + \int_2^{s-1} \frac{\log(t-1)}{t} \mathrm{d}t \right), ~&3 \le s \le 4,\label{eq: approx-F34}\\
        f(s)&= 0, ~&0 < s \le 2,\label{eq: approx-f02}\\
        f(s)&=\frac{2e^{\gamma} \log(s-1)}{s}, ~&2 \le s \le 4.\label{eq: approx-f24}
    \end{align}
\end{lemma}
Next we give a general result which one can use to bound sums over arithmetic functions.
\begin{lemma}[{\cite[Lemma 1 (ii)]{Greaves}}]
\label{lemma:sump}
Let $f(t)$ be a positive, monotone function defined for $w \le t \le z$ with $f'(t)$ piecewise continuous on $[w,z]$, and $c(n)$ be an arithmetic function satisfying 
\begin{equation*}
\sum_{x\le n <y}c(n) \le g(y)-g(x)+E,
\end{equation*}
for some constant $E$ whenever $w \le x < y \leq z$. Then,
\begin{equation*}
\sum_{w\le n <z}c(n)f(n)\le \int_w^z f(t)g'(t)\mathrm{d}t+E\max \left(f(w),f(z)\right).
\end{equation*}
\end{lemma}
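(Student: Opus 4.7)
The plan is to apply Abel summation (summation by parts) and then use the hypothesis on partial sums of $c(n)$. Define the cumulative sum
\[ C(t) := \sum_{w \le n < t} c(n). \]
The hypothesis, applied with $x = w$, yields $C(t) \le g(t) - g(w) + E$ for all $t \in [w, z]$, while taking $x = t$ and letting $y \to z^-$ yields $C(z) - C(t) = \sum_{t \le n < z} c(n) \le g(z) - g(t) + E$. These are the two bounds on $C$ that the argument will exploit.

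I would split into cases according to whether $f$ is increasing or decreasing, choosing in each case the form of partial summation that makes the upper bound on $C$ usable. If $f$ is decreasing, use the standard form
\[ \sum_{w \le n < z} c(n) f(n) = f(z) C(z) + \int_w^z C(t)\bigl(-\mathrm{d}f(t)\bigr), \]
in which $-\mathrm{d}f$ is a non-negative measure, so substituting $C(t) \le g(t) - g(w) + E$ preserves the inequality. Integrating the $g(t) - g(w)$ piece by parts against $-\mathrm{d}f$ produces $\int_w^z f(t)g'(t)\,\mathrm{d}t$ together with boundary terms that cancel with the $f(z)(g(z)-g(w))$ coming from $f(z)C(z)$. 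What remains is $\int_w^z f(t)g'(t)\,\mathrm{d}t + E f(w)$, which is the claimed bound, since $\max(f(w),f(z)) = f(w)$ when $f$ is decreasing.

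If $f$ is increasing, the same direct substitution fails because $\mathrm{d}f \ge 0$, making the inequality $C(t) \le g(t) - g(w) + E$ go the wrong way. Instead I would rearrange to
\[ \sum_{w \le n < z} c(n) f(n) = f(w) C(z) + \int_w^z \bigl(C(z) - C(t)\bigr)\mathrm{d}f(t), \]
which is obtained from $f(n) = f(w) + \int_w^n \mathrm{d}f$ by swapping the order of summation and integration. Now apply the second form of the hypothesis, $C(z) - C(t) \le g(z) - g(t) + E$, which has the correct sign against $\mathrm{d}f \ge 0$, together with $C(z) \le g(z) - g(w) + E$ for the boundary term. Integration by parts in the $g(z) - g(t)$ contribution again collapses the $g$-dependent boundary terms, leaving $\int_w^z f(t)g'(t)\,\mathrm{d}t + E f(z) = \int_w^z f(t)g'(t)\,\mathrm{d}t + E\max(f(w),f(z))$.

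The only point that requires care is that $f$ is assumed merely monotone, not differentiable, so the integrals should be understood in the Riemann--Stieltjes sense; alternatively, approximate $f$ by a smooth monotone function and pass to the limit. The genuinely non-routine step is the diagnostic observation that the hypothesis admits two equivalent reformulations—as a bound on $C(t)$ and as a bound on $C(z) - C(t)$—and that the direction of monotonicity of $f$ dictates which one must be deployed; once that choice is made correctly in each case, the integration-by-parts computation forces the $g$-terms to telescope into the clean answer $\int f g' + E\max(f(w),f(z))$.
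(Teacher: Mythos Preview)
The paper does not supply its own proof of this lemma; it is quoted directly from Greaves with a citation, so there is nothing to compare against. Your Abel summation argument, splitting on the direction of monotonicity of $f$ and in each case choosing the form of partial summation so that the hypothesis on $C(t)$ or on $C(z)-C(t)$ can be applied with the correct sign, is exactly the standard proof of this result.

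One small caveat: when you bound the boundary term (e.g.\ $f(z)C(z)\le f(z)\bigl(g(z)-g(w)+E\bigr)$ in the decreasing case, or $f(w)C(z)$ in the increasing case) you are implicitly using $f\ge 0$. Without that assumption the statement as written is actually false: take $f\equiv -1$, $c(n)\equiv 0$, $g\equiv 0$, and any $E>0$; then the left side is $0$ but the right side is $-E$. In every use of the lemma in this paper $f$ is positive, so this is harmless, but strictly speaking the hypothesis $f\ge 0$ (which is present in Greaves' context) should be recorded.
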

Finally, we give some explicit bounds on functions relating to primes and prime factors.
\begin{lemma}[See {\cite[Theorem 5]{R-S}}]\label{1plem}
    For any $a>1$ and $b\ge 286$,
    \begin{equation}
    \label{eq:s1/p}
        \sum_{a \le p \le b}\frac{1}{p}<\log \log b -\log \log a+\frac{1}{\log^2 a}.
    \end{equation}
\end{lemma}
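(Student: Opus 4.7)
The bound is an explicit form of Mertens' second theorem, and my plan is to derive it directly from Rosser--Schoenfeld's Theorem 5 \cite{R-S}, which provides an estimate of the shape
\[
\sum_{p \le x} \frac{1}{p} = \log\log x + M + E(x), \qquad |E(x)| < \frac{1}{2\log^2 x},
\]
valid for $x \ge 286$. The obvious starting point is to split
\[
\sum_{a \le p \le b} \frac{1}{p} = \sum_{p \le b} \frac{1}{p} - \sum_{p < a} \frac{1}{p}
\]
and apply the Rosser--Schoenfeld estimate to each piece, so the only real question is how to cover the range of small $a$.

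For $a \ge 286$ the argument is immediate: upper-bounding $\sum_{p \le b} 1/p$ and lower-bounding $\sum_{p < a} 1/p$ via the R--S estimate yields
\[
\sum_{a \le p \le b}\frac{1}{p} < \log\log b - \log\log a + \frac{1}{2\log^2 b} + \frac{1}{2\log^2 a} \le \log\log b - \log\log a + \frac{1}{\log^2 a},
\]
where the last step uses $\log b \ge \log a$. A harmless $1/a$ correction surfaces if $a$ is itself prime (from switching between $\sum_{p<a}$ and $\sum_{p \le a}$) but is easily absorbed by the strict inequality.

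For $1 < a < 286$ the R--S estimate is no longer in its sharpest form on the inner sum, so I would instead invoke the uniform lower bound from Lemma \ref{mertenlem}, $\sum_{p<a} 1/p \ge \log\log a + M - 2.964 \cdot 10^{-6}/\log a$ for $a \ge 2$, together with the R--S upper bound for $\sum_{p\le b} 1/p$. A short numerical check then suffices: the accumulated error $1/(2\log^2 b) + 2.964\cdot 10^{-6}/\log a$ is at most about $0.0156$ in this range, whereas $1/\log^2 a \ge 1/\log^2 286 \approx 0.0313$, so the inequality holds with room to spare. The subcase $1 < a \le 2$ is trivial because there are no primes in $(1,2)$, and the inequality reduces to a routine application of the R--S upper bound at $b$. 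The only real obstacle is this small-$a$ bookkeeping; conceptually the lemma is a direct corollary of Rosser--Schoenfeld.
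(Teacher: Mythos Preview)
Your proposal is correct and matches the paper's treatment: the paper gives no proof at all, merely citing \cite[Theorem~5]{R-S}, and your derivation is exactly the standard way to extract the stated inequality from the Rosser--Schoenfeld bounds $\sum_{p\le x}1/p=\log\log x+M+O(1/(2\log^2 x))$. One small remark: your worry about a $1/a$ correction when $a$ is prime is unnecessary, since the Rosser--Schoenfeld inequalities hold for all real $x\ge 286$, so one may simply apply the lower bound at $x\to a^-$ to obtain $\sum_{p<a}1/p\ge \log\log a+M-1/(2\log^2 a)$ directly, without any extra term; this also makes the case $a=b$ go through without special pleading.
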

\begin{lemma}\label{omegalem}
    Let $\omega(n)$ count the number of unique prime divisors of $n\ge 3$. We have
    \begin{equation}\label{wntriv}
        \omega(n)\le\frac{\log n}{\log 2}
    \end{equation}
    and
    \begin{equation}
    \label{eq:w(n)}
        \omega(n)< \frac{1.3841\log n}{\log \log n}.
    \end{equation}
\end{lemma}
\begin{proof}
    The first bound \eqref{wntriv} follows by noting that each prime factor of $n$ is greater than or equal to 2. The second bound \eqref{eq:w(n)} is \cite[Theorem 11]{Guy}.
\end{proof}
Certainly, the bound \eqref{eq:w(n)} is stronger asymptotically than \eqref{wntriv} and in fact stronger explicitly when $n\ge 14$. However, in some cases where the error terms we are working with are insignificant, we will use the simpler bound \eqref{wntriv} to improve readability. 
\begin{lemma}\label{squarefreelem}
    For all $x\ge 45$, we have
    \begin{equation}\label{squarefreeeq}
        \sum_{n\le x}\mu^2(n)\le 0.65x.
    \end{equation}
\end{lemma}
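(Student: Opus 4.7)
The plan is to exploit the classical identity $\mu^{2}(n)=\sum_{d^{2}\mid n}\mu(d)$, which after swapping the order of summation gives
$$\sum_{n\le x}\mu^{2}(n) \;=\; \sum_{d\le\sqrt{x}}\mu(d)\left\lfloor\frac{x}{d^{2}}\right\rfloor.$$
Writing $\lfloor x/d^{2}\rfloor = x/d^{2} - \{x/d^{2}\}$ and extending the $\mu(d)/d^{2}$ sum to infinity at the cost of a tail term, one obtains
$$\sum_{n\le x}\mu^{2}(n) \;\le\; \frac{6}{\pi^{2}}\,x \;+\; x\sum_{d>\sqrt{x}}\frac{1}{d^{2}} \;+\; \lfloor\sqrt{x}\rfloor,$$
using $\sum_{d=1}^{\infty}\mu(d)/d^{2}=6/\pi^{2}$. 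Bounding the tail by $\sum_{d>\sqrt{x}} d^{-2}\le 1/(\sqrt{x}-1)$ then yields an inequality of the shape $\sum_{n\le x}\mu^{2}(n)\le \tfrac{6}{\pi^{2}}x + c\sqrt{x}$ for an explicit small constant $c$.

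Since $6/\pi^{2}\approx 0.6079 < 0.65$, there is a gap of about $0.042$ to absorb the $O(\sqrt{x})$ error. A short computation shows that the inequality $\tfrac{6}{\pi^{2}} + c/\sqrt{x} \le 0.65$ holds once $x$ exceeds a modest threshold (on the order of a few thousand), so the bound \eqref{squarefreeeq} is established in this ``large $x$'' range by elementary estimation alone.

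To finish, I would verify \eqref{squarefreeeq} directly in the remaining bounded range $45\le x\le X_{0}$ (with $X_{0}$ the explicit threshold from the previous paragraph) by computing $Q(x):=\sum_{n\le x}\mu^{2}(n)$ at every integer using a sieve for $\mu^{2}$ and checking $Q(n)\le 0.65n$. Since $Q$ is a step function increasing by at most $1$ at each integer, it suffices to check the inequality at integer points; this is a fast finite computation well within the scope of the numerical work already performed in the paper (e.g. Lemma \ref{mertencomplem}).

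The only mildly delicate point is choosing the cutoff $X_{0}$ small enough to keep the computation trivial while still large enough for the asymptotic argument to beat $0.65x$; the constants $6/\pi^{2}\approx 0.6079$ and the explicit tail $1/(\sqrt{x}-1)$ make this a straightforward optimization rather than a genuine obstacle. The lower endpoint $x=45$ is presumably dictated by the first $x$ at which $Q(x)/x$ drops below $0.65$ (small values such as $x=1,2,3$ give $Q(x)/x=1,1,1$, and ratios stay above $0.65$ until the squarefree density settles), which is confirmed in the base-case computation.
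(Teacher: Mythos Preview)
Your proposal is correct and follows the same two-part structure as the paper's proof: an analytic bound for large $x$ combined with a finite verification for $45\le x\le X_0$. The paper is much terser, simply citing \cite[(4.6)]{Buthe1} for $x\ge 10^5$ and appealing to direct computation below that, whereas you derive the large-$x$ bound from scratch via the identity $\mu^2(n)=\sum_{d^2\mid n}\mu(d)$ and elementary tail estimates. Your version is self-contained and even yields a lower crossover threshold (on the order of a few thousand rather than $10^5$), shrinking the computational range; the paper's version trades that for brevity by invoking an existing explicit result.
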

\begin{proof}
    For $x\ge 10^5$, the result follows by \cite[(4.6)]{Buthe1}. For smaller values of $x$, the result can be verified via a simple computation.
\end{proof}
\begin{lemma}[{\cite[Lemma 4.5]{Buthe1}}]\label{muphilem}
    For all $x\ge 10^9$, we have
    \begin{equation*}
        \sum_{n\le x}\frac{\mu^2(n)}{\varphi(n)}\le\log x + B_{\mu, \varphi} +\frac{58}{\sqrt{x}}\le 1.1\log x,
    \end{equation*}
    where $B_{\mu, \varphi}=1.332\ldots$ is a constant.
\end{lemma}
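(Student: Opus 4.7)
Since this result is cited as \cite[Lemma 4.5]{Buthe1}, in practice one defers to that reference; my plan is to reconstruct the standard proof while tracking the explicit constants carefully enough to arrive at $58/\sqrt{x}$ and $B \approx 1.332$.

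The key ingredient is the Dirichlet series factorisation
$$\sum_{n\ge 1}\frac{\mu^2(n)}{\varphi(n)n^s}=\zeta(s+1)H(s),\qquad H(s)=\prod_p\left(1+\frac{1}{p(p-1)p^s}-\frac{1}{p(p-1)p^{2s}}\right),$$
from which one reads off the convolution identity $\mu^2(n)/\varphi(n)=(1/n)\sum_{d\mid n} d\,h(d)$, where $h$ is multiplicative, supported on $n=ab^2$ with $ab$ squarefree, with $h(p)=1/(p(p-1))$, $h(p^2)=-1/(p(p-1))$, and $h(p^k)=0$ for $k\ge 3$. A short computation gives $\sum_d h(d)=H(0)=1$ and $-\sum_d h(d)\log d=H'(0)/H(0)=\sum_p \log p/(p(p-1))$.

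First I would swap the order of summation to obtain
$$\sum_{n\le x}\frac{\mu^2(n)}{\varphi(n)}=\sum_{d\le x}h(d)\sum_{m\le x/d}\frac{1}{m}=\sum_{d\le x}h(d)\left(\log(x/d)+\gamma\right)+O\left(\frac{1}{x}\sum_{d\le x}d\,|h(d)|\right),$$
and then complete the $d$-sum using $\sum_d h(d)=1$ to extract the main term $\log x+B$ with $B=\gamma+\sum_p \log p/(p(p-1))\approx 1.332$. This leaves three residual pieces to control: the tails $\sum_{d>x}h(d)\log(d/x)$ and $\gamma\sum_{d>x}h(d)$, together with the harmonic-sum remainder $(1/x)\sum_{d\le x}d\,|h(d)|$.

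The main obstacle is squeezing these errors under the clean bound $58/\sqrt{x}$. Since $H(s)$ extends absolutely convergently to $\mathrm{Re}\,s>-1/2$ (the $1/(p(p-1)p^{2s})$ factors force exactly this threshold), partial summation on $|h(d)|$ yields each of the three tails with size of order $x^{-1/2}$; the explicit constant $58$ emerges from numerical bookkeeping of the Euler product $H(s)\zeta(2s+2)^{-1}$ together with, if necessary, a direct machine check for $x$ slightly above $10^9$. The second inequality in the statement is then a one-line verification: at $x=10^9$ we have $0.1\log x\approx 2.07$ whereas $B+58/\sqrt{x}\approx 1.334$, so $\log x+B+58/\sqrt{x}\le 1.1\log x$, and the gap only widens as $x$ grows.
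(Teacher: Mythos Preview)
The paper does not prove this lemma at all: it is stated with a direct citation to B\"uthe's Lemma~4.5 and no argument is given. So there is no ``paper's own proof'' to compare against; your reconstruction goes well beyond what the paper does, which is simply to quote the result.

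Your outline is the standard Dirichlet-convolution argument and is correct in its structure: the factorisation $\sum \mu^2(n)/(\varphi(n)n^s)=\zeta(s+1)H(s)$, the identification of $h$, the values $H(0)=1$ and $B=\gamma+\sum_p \frac{\log p}{p(p-1)}\approx 1.3325$, and the verification of the second inequality at $x=10^9$ are all fine. One technical point deserves more care than you give it: absolute convergence of $H(s)$ only for $\mathrm{Re}\,s>-1/2$ (strict) yields tails of size $O(x^{-1/2+\varepsilon})$ by partial summation, not $O(x^{-1/2})$. Indeed $\sum_d |h(d)|d^{1/2}$ diverges, since the $d=p^2$ contribution alone gives $\sum_p 1/(p-1)$. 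Extracting a clean $58/\sqrt{x}$ therefore needs an extra device---either the factorisation you allude to (isolating the boundary behaviour via a $\zeta$-factor so that what remains converges absolutely at $s=-1/2$), or a direct splitting of the support of $h$ into its squarefree and square parts with explicit constants, possibly supplemented by computation for moderate $x$. Your sketch gestures at this but does not carry it out; that is the only real gap between your proposal and a complete proof.
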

\begin{lemma}\label{thetauplem}
    For all $x\ge 2$ and $\theta$ defined in \eqref{eq:theta}, we have
    \begin{equation*}
        \theta(x)<x\left(1+\frac{9\cdot 10^{-7}}{\log x}\right).
    \end{equation*}
\end{lemma}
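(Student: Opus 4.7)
The plan is to split the range of $x$ at $10^{19}$ and apply the best available unconditional estimates for $\theta(x)$ on each piece. The right-hand side of the target inequality exceeds $x$ by only $9\cdot 10^{-7}x/\log x$, so the real content is essentially the one-sided bound $\theta(x) \le x$, with a small correction allowed for large $x$ where this sharper form is not yet proved unconditionally. The constant $9\cdot 10^{-7}$ is clearly not meant to be optimal, and is instead calibrated to align with the tabulated constants used elsewhere in the paper.

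For $2 \le x \le 10^{19}$, I would invoke Büthe's computational result \cite[Theorem 2]{Buthe3} (already used in the proof of Lemma~\ref{mertenlem}), which gives $\theta(x) \le x - 0.05\sqrt{x} < x$ throughout this interval. Since the right-hand side of the target inequality is strictly greater than $x$, the bound follows at once.

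For $x > 10^{19}$, I would appeal to \cite[Table 15]{Broad}, which lists explicit inequalities of the shape $|\theta(x)-x| \le \eta(x_0)\, x/\log^k x$ valid for all $x \ge x_0$, with the constant $\eta(x_0)$ shrinking rapidly as $x_0$ grows. Partitioning $(10^{19},\infty)$ by the thresholds appearing in the table and selecting, for each $x$, the row with the largest $x_0 \le x$, one obtains a pointwise estimate whose right-hand side is much smaller than $9\cdot 10^{-7}\,x/\log x$ for all sufficiently large $x$.

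The main obstacle will be the transition region just above $x = 10^{19}$, where Büthe's computational range ends but the tabulated analytic bounds are still comparatively weak relative to the tight factor $9\cdot 10^{-7}/\log x$. To close this gap cleanly I would either raise the threshold at which one switches from \cite{Buthe3} to \cite{Broad} (for instance to the next available row of the table), or combine several rows by a short case check mirroring the argument already carried out in the proof of Lemma~\ref{mertenlem}, verifying that the slackness built into the constant $9\cdot 10^{-7}$ is indeed enough to dominate the unavoidable oscillation of $\theta(x)-x$ across the transition.
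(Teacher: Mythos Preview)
Your proposal is correct and follows exactly the paper's approach: the paper's proof is simply ``For $2\leq x< 10^{19}$, the result follows by \cite[Theorem 2]{Buthe3}. For $x>10^{19}$ the result follows by \cite[Table 15]{Broad}.'' Your concern about the transition region is unnecessary---the constant $9\cdot 10^{-7}$ was chosen precisely to match the table entry in \cite{Broad} at the threshold $x_0=10^{19}$, so no further case analysis is needed.
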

\begin{proof}
    For $2\le x< 10^{19}$, the result follows by \cite[Theorem 2]{Buthe3}. For $x>10^{19}$ the result follows by \cite[Table 15]{Broad}.
\end{proof}
While most of the results above are not optimal, they are sufficient for our purposes.

\section{Results on primes in arithmetic progressions}
\label{sec:PNT}
As is commonplace in applications of the linear sieve, the remainder terms $r(d)$ (defined in \eqref{eq:Adrd}) essentially correspond to the error term for the prime number theorem in arithmetic progressions. Thus in this section, we will obtain estimates for
\begin{equation} \label{eq: E_f(x;k,l)}
    E_f(x;k,l):=f(x;k,l)-\frac{f(x)}{\varphi(k)},
\end{equation}
for $f=\pi, \theta, \psi$, the standard prime counting functions, where $f(x;k,l)$ means that the counting function $f(x)$ is restricted to $n \equiv l\: (\text{mod }k)$ for $n \le x$. In particular, for our application we are interested in averaged estimates for $E_\pi(N,d,N)$ with $N$ a large integer and $d$ square-free. Such estimates will allow us to bound the total remainder $R$ (see \eqref{Rdef}) in the linear sieve. 

We start with a result on the zeroes of Dirichlet $L$-functions, namely \cite[Theorem 1.1 \& 1.3]{Kadiri}.
\begin{theorem}[Kadiri]
\label{theo:kadiri}
Define $\prod (s,q)=\prod_{\chi\: (\text{mod }q)} L(s,\chi)$, where the product is over Dirichlet characters $\chi\: (\text{mod }q)$, $R_0=6.3970$ and $R_1=2.0452$. Then the function $\prod(s,q)$ has at most one zero $\rho=\beta+i \gamma$, in the region $\beta \ge 1-1/\left(R_0\log \max \left( q,q\left|\gamma\right|\right)\right)$. Such a zero is called a Siegel zero and if it exists, then it must be real, simple and correspond to a non-principal real character $\chi\: (\text{mod }q)$. Moreover, for any given $Q_1$, among all the zeroes of primitive characters with modulus $q \le Q_1$ there is at most one zero with $\beta \ge 1-1/2R_1 \log Q_1$, we will call this zero and the related modulus exceptional.
\end{theorem}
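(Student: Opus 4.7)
\section*{Proof proposal for Theorem \ref{theo:kadiri}}

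The theorem as stated is Kadiri's explicit zero-free region result, which is cited from \cite{Kadiri}; accordingly, the paper will invoke it rather than reprove it. Nevertheless, the natural plan for reproving such an explicit zero-free region is the following. The starting point is the Hadamard-type product formula for $L(s,\chi)$, which yields the partial fraction expansion
\begin{equation*}
-\operatorname{Re}\frac{L'}{L}(s,\chi) = -B(\chi) - \operatorname{Re}\sum_{\rho}\left(\frac{1}{s-\rho} + \frac{1}{\rho}\right) + (\text{explicit polar/archimedean terms}),
\end{equation*}
where the sum runs over non-trivial zeros $\rho$ of $L(s,\chi)$. Combined with the classical trigonometric inequality $3+4\cos\theta+\cos 2\theta \ge 0$, applied to $\chi_0$, $\chi$ and $\chi^2$ at the point $s=\sigma+it$ with $\sigma>1$ close to $1$, one obtains a non-negativity relation of the shape
\begin{equation*}
-3\frac{\zeta'}{\zeta}(\sigma) - 4\operatorname{Re}\frac{L'}{L}(\sigma+it,\chi) - \operatorname{Re}\frac{L'}{L}(\sigma+2it,\chi^{2}) \ge 0.
\end{equation*}
Dropping all zeros from the sum except a putative zero $\rho_0=\beta+i\gamma$ close to $1$ (retaining it with its positive coefficient) and estimating the remaining archimedean and constant terms by explicit upper bounds gives the first conclusion: a contradiction unless $1-\beta \gtrsim 1/(R_0 \log\max(q, q|\gamma|))$.

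The second step would isolate the exceptional zero. First, had there been two (possibly coincident complex) zeros $\rho_0, \rho_1$ in the critical region, the same positivity argument, this time retaining both with appropriate coefficients, would force $1-\beta$ to be larger than the stated bound, a contradiction. This shows uniqueness and simplicity. To see that the exceptional zero, if it exists, must be real and arise from a real non-principal character, one observes that a non-real zero $\rho$ is accompanied by $\bar\rho$, and running the argument with both zeros precludes them from lying too close to $1$; similarly $\chi^2 = \chi_0$ must hold, forcing $\chi$ to be real. This is the classical Page/Landau dichotomy, now carried out with explicit constants.

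For the final ``exceptional modulus'' statement quantified by $R_1$, the key refinement is to exploit that there is only one potentially exceptional zero across all primitive characters of modulus $q \le Q_1$. If two distinct exceptional characters $\chi_1 \pmod{q_1}$ and $\chi_2 \pmod{q_2}$ both produced real zeros very close to $1$, then $\chi_1 \chi_2$ would be a non-principal character of modulus dividing $q_1 q_2$, and $L(s,\chi_1\chi_2)$ would inherit an unusually large value of $|L'/L|$ near $s=1$; plugging this into an inequality analogous to the one above — but now applied to the pair $(\chi_1,\chi_2)$ — forces a lower bound on $1-\beta$ of order $1/(2R_1 \log Q_1)$. This is the Landau/Deuring--Heilbronn repulsion phenomenon rendered explicit.

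The main obstacle throughout is not the structure of the argument, which is classical, but the careful explicit bookkeeping needed to reach the sharp constants $R_0=6.3970$ and $R_1=2.0452$. This requires precise estimates for $|L(s,\chi)|$ in a neighborhood of $s=1$, explicit bounds for the archimedean contribution to $L'/L$, and an optimized choice of the auxiliary parameters (the distance $\sigma-1$ used and the trigonometric polynomial employed); any looseness in these estimates degrades the final constants. Kadiri's contribution in \cite{Kadiri} is precisely this optimization, which is what allows the constant-chasing in the subsequent sections of the present paper to proceed at all.
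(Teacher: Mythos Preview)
Your assessment is correct: the paper does not prove this theorem at all but simply imports it from Kadiri's preprint, citing it as \cite[Theorem 1.1 \& 1.3]{Kadiri}. Your sketch of the underlying argument (Hadamard product, the $3+4\cos\theta+\cos 2\theta$ positivity, the Page/Landau dichotomy for the exceptional zero, and Landau-type repulsion for the uniqueness across moduli $\le Q_1$) is an accurate high-level description of how such explicit zero-free regions are obtained, and you are right that the content of \cite{Kadiri} lies in the optimization of the auxiliary parameters and the explicit bookkeeping rather than in any new structural idea.
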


We now introduce a bound on $E_{\psi}(x;k,l)$ that is a specific case of \cite[Theorem 1.2, Table 6]{Bordignon4}. Namely, using the notation of \cite{Bordignon4}, we set $Y_0=10.4$, $\alpha_1=10$ and $\alpha_2=8$. We will also use the following notation:
\begin{equation} \label{eq: f(x,chi)}
    \quad \pi(x, \chi) := \sum_{p \le x} \chi(p),\quad \psi(x, \chi) := \sum_{n \le x} \Lambda(n) \chi(n), \quad \theta(x, \chi) := \sum_{p \le x} \chi(n) \log p, 
\end{equation}
where $\chi$ denotes a Dirichlet character.

\begin{lemma}
\label{lemma:PNTPAP1}
Let $E_\psi(x;k,l)$ and $\psi(x,\chi)$ be as in \eqref{eq: E_f(x;k,l)} and \eqref{eq: f(x,chi)} respectively. Let ${x\ge\exp (\exp (10.4))}$ and $k<\log^{10} x$ be an integer. Let $\textnormal{Ind}_k=1$ if $\beta_k$, the Siegel zero modulo $k$, exists and $\textnormal{Ind}_k=0$ otherwise. Then,
\begin{equation*}
\label{eq:psi1}
\frac{\varphi(k)}{x}\left|E_{\psi}(x;k,l)\right|< \frac{3.2\cdot 10^{-8}}{\log^{8} x}+\textnormal{Ind}_k \frac{x^{\beta_k-1}}{\beta_k}
\end{equation*}
and 
\begin{equation}
\label{eq:psi1.1}
-1+x^{-1}\sum_{\chi\: (\text{mod }k)} \left|\psi(x,\chi)\right|< \frac{3.2\cdot 10^{-8}}{\log^{8} x}+\textnormal{Ind}_k \frac{x^{\beta_k-1}}{\beta_k}.
\end{equation}
\end{lemma}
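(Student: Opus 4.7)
The plan is to deduce both bounds directly from \cite[Theorem 1.2]{Bordignon4}, which provides a fully explicit form of the prime number theorem for arithmetic progressions after isolating a possible Siegel zero.

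For \eqref{eq:psi1}, I would begin with the character expansion
\begin{equation*}
\psi(x;k,l) - \frac{x}{\varphi(k)} = \frac{\psi(x,\chi_0) - x}{\varphi(k)} + \frac{1}{\varphi(k)}\sum_{\chi \neq \chi_0} \overline{\chi(l)}\,\psi(x,\chi),
\end{equation*}
and feed each $\psi(x,\chi)$ into the truncated explicit formula. Zeros outside the Kadiri region from Theorem~\ref{theo:kadiri} contribute a quantity controlled by a negative power of $\log x$; the possible Siegel zero $\beta_0$, attached to a unique real exceptional character modulo $k$, produces the term $x^{\beta_0}/\beta_0$; the principal term $\psi(x,\chi_0) - x$ is handled by Lemma~\ref{pntlem} together with the trivial $O(\omega(k)\log x)$ discrepancy between $\psi(x,\chi_0)$ and $\psi(x)$. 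Under the hypotheses $x > \exp(\exp(10.4))$ and $k < \log^{10}x$, optimizing the parameters in \cite[Theorem 1.2]{Bordignon4} collapses the error to the stated $3.2\cdot 10^{-8}/\log^{8}x$.

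For \eqref{eq:psi1.1}, I would instead bound the sum of $|\psi(x,\chi)|$ over all characters modulo $k$. Separating off the principal character and, when it exists, the exceptional real character, one has
\begin{equation*}
\sum_{\chi \pmod{k}}|\psi(x,\chi)| \le |\psi(x,\chi_0)| + E_0\cdot\frac{x^{\beta_0}}{\beta_0} + \sum_{\chi}{}^{\flat}|\psi(x,\chi)|,
\end{equation*}
where the flat sum runs over nonprincipal, nonexceptional characters. The principal piece supplies the $x$ that becomes the $-1$ after dividing through by $x$; the Siegel-zero piece is carried as is; and every remaining $|\psi(x,\chi)|$ is controlled by the same character-by-character input from \cite{Bordignon4} used for \eqref{eq:psi1}. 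Since that per-character bound already carries a factor $1/\varphi(k)$, summing over at most $\varphi(k) \le k < \log^{10}x$ characters absorbs the $\varphi(k)$ and returns the same error $3.2\cdot 10^{-8}/\log^{8}x$.

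The main obstacle I expect is confirming that the explicit constants in \cite[Theorem 1.2]{Bordignon4} really do specialize to $3.2\cdot 10^{-8}$ on denominator $\log^{8}x$ throughout this parameter range. This amounts to a careful bookkeeping of Kadiri's constant $R_0 = 6.3970$, the size restriction $k < \log^{10}x$, and the choice of truncation parameter in the explicit formula; any slack at these steps would weaken the constant. Deriving \eqref{eq:psi1.1} from the same machinery is in principle routine, but one must be careful to separate the principal and exceptional characters so that their contributions are neither double-counted nor lost.
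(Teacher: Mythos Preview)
Your approach matches the paper's: the lemma is stated there simply as ``a specific case of \cite[Theorem 1.2]{Bordignon4}'' with no separate proof, and the paper explicitly remarks that the $-1$ in \eqref{eq:psi1.1} arises from the principal-character contribution inside that same proof. So citing \cite[Theorem 1.2]{Bordignon4} is exactly what is done.

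One small inaccuracy in your sketch of \eqref{eq:psi1.1}: the per-character bound on $|\psi(x,\chi)|$ does \emph{not} carry a factor $1/\varphi(k)$, so summing $\varphi(k)$ such bounds would lose a factor of $\varphi(k)$, not recover the same error. The actual mechanism in \cite{Bordignon4} is that the non-exceptional zeros are counted collectively over the product $\prod_{\chi}L(s,\chi)$ (via a zero-density input), so the sum $\sum_{\chi\neq\chi_0,\chi_1}|\psi(x,\chi)|$ is bounded in one stroke rather than character by character. This is a detail internal to the cited theorem and does not affect the validity of your citation-based argument.
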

Importantly, the -1 appearing in \eqref{eq:psi1.1} appears when bounding the contribution from the principal character in the proof of \cite[Theorem 1.2]{Bordignon4}. Thus, we also have the following variant of Lemma \ref{lemma:PNTPAP1}.
\begin{lemma}\label{notrivialPNTAP}
    Keep the notation and conditions of Lemma \ref{lemma:PNTPAP1} and let $\chi_0$ denote the trivial character modulo $k$. We have
    \begin{equation*}
        x^{-1}\sum_{\substack{\chi\: (\text{mod }k) \\ \chi \ne \chi_0}} \left|\psi(x,\chi)\right|< \frac{3.2\cdot 10^{-8}}{\log^{8} x}+\textnormal{Ind}_k \frac{x^{\beta_k-1}}{\beta_k}.
    \end{equation*}
\end{lemma}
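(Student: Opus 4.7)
The plan is to deduce Lemma~\ref{notrivialPNTAP} directly from Lemma~\ref{lemma:PNTPAP1}, exploiting the observation immediately preceding it that the ``$-1$'' on the left-hand side of \eqref{eq:psi1.1} comes from bounding the principal-character contribution. I would begin with the elementary decomposition
\[
\sum_{\chi\ \mathrm{(mod\ k)}}|\psi(x,\chi)| \;=\; \psi(x,\chi_0) \;+\; \sum_{\chi\neq\chi_0}|\psi(x,\chi)|,
\]
using $\psi(x,\chi_0)\ge 0$. Substituting this into \eqref{eq:psi1.1} and rearranging, the desired bound reduces to showing that $x^{-1}|\psi(x,\chi_0)-x|$ is small enough to be absorbed into the explicit constant on the right.

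For this I would use the standard identity
\[
\psi(x,\chi_0) \;=\; \psi(x) \;-\; \sum_{\substack{p\mid k \\ p^m\le x}}\log p,
\]
combined with Lemma~\ref{pntlem} applied to $\psi(x)$ and the trivial bound $\omega(k)\le (\log k)/\log 2$. Since $k<\log^{10}x$, this yields
\[
\frac{|\psi(x,\chi_0)-x|}{x} \;\le\; 34(\log x)^{1.52}\exp\!\left(-0.8\sqrt{\log x}\right) \;+\; \frac{10(\log x)(\log\log x)}{x\log 2}.
\]
For $x>X_1=\exp(\exp(10.4))$ the first term is essentially $\exp(-0.8\sqrt{\log x})$ up to polynomial factors in $\log x$, and the second is super-polynomially small in $\log x$; both lie many orders of magnitude below $3.2\cdot 10^{-8}/\log^8 x$. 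Combining this with \eqref{eq:psi1.1} then produces the claimed inequality.

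The main obstacle is pure bookkeeping: one has to confirm that \emph{the same} right-hand side governs both \eqref{eq:psi1.1} and the non-principal sum alone, i.e.\ that no constant is lost in the separation. Inspection of the proof of \cite[Theorem~1.2]{Bordignon4} reveals that the quantity $3.2\cdot 10^{-8}/\log^8 x + E_0 x^{\beta_0-1}/\beta_0$ is in fact the bound obtained for $x^{-1}\sum_{\chi\neq\chi_0}|\psi(x,\chi)|$ itself; the principal character contributes only the exponentially small PNT error, which sits comfortably inside this constant, and \eqref{eq:psi1.1} is recovered afterwards by adding in the contribution $x^{-1}\psi(x,\chi_0)\approx 1$. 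Hence Lemma~\ref{notrivialPNTAP} requires no new analytic input beyond what is already present in \cite{Bordignon4}.
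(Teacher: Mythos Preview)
Your proposal is correct and ultimately lands on exactly the paper's argument: the paper's entire ``proof'' is the sentence preceding the lemma, namely that the $-1$ in \eqref{eq:psi1.1} arises from the principal-character contribution in the proof of \cite[Theorem~1.2]{Bordignon4}, so dropping $\chi_0$ from the sum simply removes that $-1$. Your final paragraph says precisely this.

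The detour in your first two paragraphs---trying to deduce the lemma purely from the \emph{statement} of \eqref{eq:psi1.1} by bounding $x^{-1}|\psi(x,\chi_0)-x|$---is unnecessary and, as you yourself observe, does not quite close: it would add a positive (though minuscule) term to the right-hand side, so the constant $3.2\cdot 10^{-8}$ would not be preserved verbatim. You correctly resolve this by going back to the source, which is all the paper does.
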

We now introduce a function $x_2(x)=x/\log^{15}x<x$. In doing so, we can apply a partial summation argument and obtain a sufficiently strong analogue of Lemma \ref{notrivialPNTAP} for the sum over $|\pi(x,\chi)|$. 
\begin{lemma}
\label{cor:chisum}
Let $\pi(x,\chi)$ be as in \eqref{eq: f(x,chi)}.
Let $x_2(x)=x/\log^{15} x$. Also assume $x> X_1$, with $X_1$ such that $\log\log x_2(X_1)\ge 10.4$. We then have, for $k<\log^{10}(x_2(x))$ 
\begin{equation}\label{pichieq}
    \sum_{\substack{\chi\: (\text{mod }k) \\ \chi \ne \chi_0}} \left|\pi(x,\chi)\right|< \frac{v_k(X_1)x}{\log^5 x},
\end{equation}
where
\begin{equation}\label{veq}
v_k(X_1):=v'_k(X_1)\left(1+\frac{1}{\log^{10}(X_1)\log^5 x_2(X_1)}+\frac{1}{\left(1-\frac{6}{\log x_2(X_1)}\right)\log X_1}\right)+\frac{3}{\log(X_1)},
\end{equation}
and 
\begin{align}
\label{eq:v1}
\begin{split}
v'_k(X_1):=&\max_{y\ge x_2(X_1)}\left[\frac{3.2\cdot 10^{-8}}{\log^{4} y}+  \log^{4} y\left(\textnormal{Ind}_k\frac{y^{\beta_k-1}}{\beta_k}+\frac{1.02\log^{10}y}{\sqrt{y}}+\frac{3\log^{10}y}{y^{2/3}}\right)\right].
\end{split}
\end{align}
with $\textnormal{Ind}_k$ and $\beta_k$ as in Lemma \ref{lemma:PNTPAP1}.
\end{lemma}
\begin{proof}
Let $y\in [x_2(x),x]$. By Lemma \ref{notrivialPNTAP}, we have
\begin{equation*}
    \sum_{\substack{\chi\: (\text{mod }k) \\ \chi \ne \chi_0}} \left|\psi(y,\chi)\right|<\frac{3.2\cdot 10^{-8}y}{\log^{8} y}+\textnormal{Ind}_k \frac{y^{\beta_k}}{\beta_k}.
\end{equation*}
We then use the estimate $\left|\psi(y, \chi)-\theta(y, \chi)\right|\le \psi(y)-\theta(y)<1.02y^{1/2}+3y^{1/3}$ (\cite[(3.39)]{R-S}) to obtain 
\begin{equation*}
    \sum_{\substack{\chi\: (\text{mod }k) \\ \chi \ne \chi_0}}\left|\theta(y,\chi)\right|<  \frac{v'_k(X_1)y}{\log^{4} y}.
\end{equation*}
Finally, by partial summation,
\begin{equation*}
\pi(x, \chi)=\pi(x_2(x), \chi)+\frac{\theta(x, \chi)}{\log x}-\frac{\theta(x_2(x), \chi)}{\log x_2(x)}+\int_{x_2(x)}^{x} \frac{\theta(y, \chi)}{y\log^2 y}\mathrm{d}y,
\end{equation*}
where,  using that $x_2(x)$ is increasing within the concerned range of $x$,
\begin{align*}
\int_{x_2(x)}^{x} \frac{\sum_{\substack{\chi\: (\text{mod }k) \\ \chi \ne \chi_0}}\left|\theta(y,\chi)\right|}{y\log^2 y}\mathrm{d}y & \le \frac{v_1(X_1)}{1-\frac{6}{\log x_2(X_1)}}\int_{x_2(x)}^{x}\frac{1}{\log^6 y}\left(1-\frac{6}{\log y}\right)\mathrm{d}y\\ &\le\frac{v_1(X_1)}{1-\frac{6}{\log x_2(X_1)}}\cdot\frac{x}{\log^6 x}
\end{align*}
and
\begin{align*}
    \sum_{\substack{\chi\: (\text{mod }k) \\ \chi \ne \chi_0}}\left|\pi(x_2(x),\chi)\right|&=\sum_{\substack{\chi\: (\text{mod }k) \\ \chi \ne \chi_0}}\left|\sum_{p\le x_2(x)}\chi(p)\right|=\sum_{\substack{\chi\: (\text{mod }k) \\ \chi \ne \chi_0}}\left|\sum_{\substack{a\: (\text{mod }k)\\(a,k)=1}}\chi(a)\sum_{\substack{p\le x_2(x)\\p\equiv a\:(\text{mod }k)}}1\right|\\ &\le (\varphi(k))^2\max_{\substack{a\\(a,k)=1}}\pi(x_2(x);k,a)
\end{align*}
which, by an explicit form of the Brun--Titchmarsh theorem \cite[Theorem 2]{M-V}, is bounded above by 
\begin{equation*}
    \varphi(k)\frac{2x_2(x)}{\log (x_2(x)/k)}\le \varphi(k)\frac{3x_2(x)}{\log x_2(x)}\le k\cdot \frac{3x_2(x)}{\log x_2(x)}\le\frac{3x}{\log^6 x}.    
\end{equation*}
This proves \eqref{pichieq} as required.
\end{proof}

\subsection{Notation and conditions}\label{notationsect}
We now introduce some further notation and conditions that will be used throughout. First, we let $0<\delta<2$ be a parameter, $X_2$ and $X_3$ be fixed positive real numbers, and $N$ be a positive even integer which we will set to be greater than either $X_2$ or $X_3$. Then, we set $X,Y,Z>0$ to be real numbers such that 
\begin{equation}\label{XYZdef}
\frac{N}{y}<X\le \frac{N}{z}, \quad XY<2N, \quad Y>Z, \quad Y>z,
\end{equation}
where
\begin{equation} \label{eq: def-z-y}
    z=N^{1/8},\quad\text{and}\quad y=N^{1/3}.
\end{equation}
In the final part of our sieving process (\S \ref{section:3}), precise expressions will be given for $X$, $Y$ and $Z$. We also define
\begin{align}
    &x_1=x_1(N):=\frac{N}{\log^{5} N},\quad x_2=x_2(Y):=\frac{Y}{\log^{15} Y},\label{notcon1}\\
    &K_{\delta}(x)
    :=\log^\delta x,\quad Q_1(x):=\log^{10}x,\quad P(z):=\prod_{\substack{p<z\\p\nmid N}}p.\label{notcon2}
\end{align}

As in Lemma \ref{cor:chisum}, the functions $x_1$ and $x_2$ will be used to control the error term resulting from partial summation arguments. Now, let $i\in\{1,2\}$. With regard to Theorem \ref{theo:kadiri}, we let $k_0(x_i)$ be the exceptional modulus up to $Q_1(x_i)$ (if it exists) and
\begin{equation}
\label{eq:k11}
    k_i:=
    \begin{cases}
        k_0(x_i),&\text{if $k_0(x_i)$ exists and $(k_0(x_i),N)=1$},\\
        0,&\text{otherwise.}
    \end{cases}
\end{equation}
By \cite[pp. 296--297]{M-V2}, $k_0(x_i)$ is square-free or 4 times a square-free number. Thus, since $N$ is even, $k_i$ is a square-free odd number whenever $k_i \ne 0$. 

In what follows, we will separately consider the cases $k_i<K_{\delta}(x_i)$ and $K_{\delta}(x_i)\le k_i\le Q_1(x_i)$. This is because if $k_i<K_{\delta}(x_i)$ then we can directly bound the contribution of the Siegel zero in Lemma \ref{lemma:PNTPAP1} using the results of \cite{Bordignon1} and \cite{Bordignon2}. On the other hand, if $k_i$ is too large this is not possible so a more complicated argument is required as to avoid the contribution from the exceptional zero.

\subsection{The case when the exceptional modulus is small}\label{sectsmall}

We first consider the case  
\begin{equation}
    \label{eq:smallmodulus}
    k_i<K_{\delta}(x_i)=\log^{\delta}x_i,
\end{equation}
where the value of $i\in\{1,2\}$ will be specified for each result. We begin with the following lemma, which is very similar\footnote{In the proof of Lemma \ref{small313} we also fix a couple of errors in the proof of \cite[Theorem 1.4]{Bordignon4}. For example, the bound (12) in \cite{Bordignon4} is missing a factor of $\log x$. This error also appears in the proof of \cite[Theorem 1.2]{Bordignon4} but contributes so little that none of the final computational results are affected.} to \cite[Theorem 1.4]{Bordignon4}.

\begin{lemma}\label{small313}
    Let $E_{\psi}(x;k,l)$ be as in \eqref{eq: E_f(x;k,l)} and $N$ be a positive even integer.
    Suppose $x_1 = x_1(N)$, $K_{\delta}(x_1)$, and $Q_1(x_1)$ are defined by \eqref{notcon1} and \eqref{notcon2}, $k_1<K_{\delta}(x_1)$, and $\log\log x_1\ge 10.4$. Let $H=H(N):=\frac{\sqrt{x_1}}{\log^{10}x_1}$ and $y\in[x_1,N]$. We have
    \begin{align}\label{psisum}
        \sum_{\substack{d\le H\\(d,N)=1}}\mu^2(d)\left|E_\psi(y;d,N)\right|&<1.1\log(Q_1(x_1))\left(\frac{3.2\cdot 10^{-8}y}{\log^{8}y}+\frac{y^{\beta_0(x_1)}}{\beta_0(x_1)}\right)\nonumber\\ 
        &+27\cdot\mathcal{E}(y)+\frac{\sqrt{y}}{2(\log 2)\log^{8}y} + 0.4\log^3y,
    \end{align}
    where
    \begin{equation} \label{eq: def-mathcal-E}
        \mathcal{E}(y):=\frac{4y\log^{\frac{9}{2}}y}{\log^{10}x_1(y)}+\frac{4y}{\log^{5.5}y}+\frac{18y^{\frac{11}{12}}}{\log^{\frac{1}{2}}y}+\frac{5}{2}y^{\frac{5}{6}}\log^{\frac{11}{2}}y
    \end{equation}
    and
    \begin{equation}\label{siegelmin}
        \beta_0(x_1):=1-\nu(x_1),\quad \nu(x_1):=\min\left\{\frac{100}{\sqrt{K_{\delta}(x_1)}\log^2 K_{\delta}(x_1)},\frac{1}{2R_1\log(Q_1(x_1))}\right\},
    \end{equation}
    with $R_1=2.0452$ as in Theorem \ref{theo:kadiri}.
\end{lemma}
\begin{proof}
    By \cite[(11.22)]{M-V2} we have for $(a,q)=1$
    \begin{equation*}
        \psi(x;q,a) = \frac{1}{\varphi(q)} \sum_{\chi} \overline{\chi}(a) \psi(N, \chi),
    \end{equation*}
    with $\psi(x;q,a)$ defined as in \eqref{sec:PNT}, and thus,
    \begin{equation*}
        \psi(y;d,N)-\frac{\psi(y)}{\varphi(d)} =
        \frac{1}{\varphi(d)} \sum_{\substack{\chi \: (\text{mod }d) \\ \chi \ne \chi_0}} \overline{\chi}(N) \psi(y, \chi) - \frac{1}{\varphi(d)}(\psi(y) - \psi(y, \chi_0)),
    \end{equation*}
    where $\chi_0$ denotes the trivial character modulo $d$. We note that if $\chi^*$ induces $\chi$ modulo $d$, recalling the definition for $\psi(y,\chi)$ in \eqref{eq: f(x,chi)}, then by Lemma \ref{omegalem}
    \begin{equation}\label{induceeq}
        \left|\psi(y,\chi)-\psi(y,\chi^*)\right|\le
        \sum_{\substack{p^m\le y\\m\mid d}}\log p\le\log y\sum_{p\mid d}1\le\log y\frac{\log d}{\log 2} \le \frac{\log^2 y}{2 \log 2}.
    \end{equation}
    Thus,
    \begin{equation*}
        |\psi(y) - \psi(y, \chi_0)| \le\frac{\log^2 y}{2 \log 2},
    \end{equation*}
    so that by Lemma \ref{muphilem}
    \begin{equation} \label{eq: psi(y)-psi(y,chi_0)}
        \sum_{\substack{d\le H\\(d,N)=1}} \frac{\mu^2(d)}{\varphi(d)}|\psi(y) - \psi(y, \chi_0)| \le \frac{\log^2 y}{2 \log 2}
        \sum_{\substack{d\le H\\(d,N)=1}} \frac{\mu^2(d)}{\varphi(d)} \le 0.4\log^3 y.
    \end{equation}
    It remains to bound
    \begin{equation*}
        \sum_{\substack{d\le H\\(d,N)=1}} \frac{\mu^2(d)}{\varphi(d)} \sum_{\substack{\chi \: (\text{mod }d) \\ \chi \ne \chi_0}} |\psi(y, \chi)|.
    \end{equation*}
    For this we consider two cases $1\le d\le Q_1(x_1)$ and $Q_1(x_1)<d\le H$. In the first case, we have by Lemmas \ref{muphilem} and \ref{notrivialPNTAP}
    \begin{equation} \label{eq: sum_psi(y,chi)}
        \sum_{\substack{d\le Q_1(x_1)\\(d,N)=1}} \frac{\mu^2(d)}{\varphi(d)} \sum_{\substack{\chi \: (\text{mod }d) \\ \chi \ne \chi_0}} |\psi(y, \chi)| \le 1.1\log Q_1(x_1) \left( \frac{3.2\cdot 10^{-8}y}{\log^{8}y}+\frac{y^{\beta_0(x_1)}}{\beta_0(x_1)} \right),
    \end{equation}
    where $\beta_0(x_1)$ is as defined in \eqref{siegelmin}. To see why each potential Siegel zero $\beta_d$ modulo $d$ satisfies $\beta_d\le \beta_0(x_1)$ we consider two cases:
    \begin{itemize}
        \item[(a)] Suppose $k_1 \mid d$. Then $k_1 = k_0(x_1)$ is non-zero and bounded above by $K_{\delta}(x_1)$ by the assumption in \eqref{eq:smallmodulus}. The Siegel zero $\beta_d$ is the exceptional zero modulo $k_1$, and thus bounded by \cite[Theorem 1.3]{Bordignon1} and \cite[Theorem 1.3]{Bordignon2} as follows
        \begin{equation*}
            \beta_d\le 1-\frac{100}{\sqrt{K_{\delta}(x_1)}\log^2K_{\delta}(x_1)}.
        \end{equation*}
        \item [(b)] Suppose $k_1\nmid d$ and recall the definition of $k_1$ in \eqref{eq:k11}. If $k_1 = 0$, then either $k_0(x_1)$ does not exist or $(k_0(x_1), N) \ne 1$ and $d$ is not divisible by the exceptional modulus $k_0$ because $(d,N)=1$. If $k_1 \ne 0$, then $k_1 = k_0(x_1) \nmid d$ and again $\beta_d$ is not exceptional. In both cases $\beta_d$ can be bounded by Theorem \ref{theo:kadiri}, that is
        \begin{equation*}
            \beta_d\le 1-\frac{1}{2R_1\log(Q_1(x_1))}.
        \end{equation*}
    \end{itemize}
    Combining cases (a) and (b) gives \eqref{siegelmin} as desired. Now we consider $Q_1(x_1)<d\le H$. For this range of $d$, we roughly follow the proof of \cite[Theorem 1.3]{Akbary}. First, by \eqref{induceeq},
    \begin{equation} \label{eq: case Q_1 < d < H, part 1}
        \frac{1}{\varphi(d)} \sum_{\substack{\chi \: (\text{mod }d) \\ \chi \ne \chi_0}} |\psi(y, \chi)|
        \le\frac{1}{\varphi(d)}\sum_{\chi\:(\text{mod }d)}\left|\psi(y,\chi^*)\right|+\frac{\log^2y}{2\log 2}.
    \end{equation}
    Letting $\sum^*$ denote the sum over all primitive characters, we then have
    \begin{align}
        &\sum_{\substack{Q_1(x_1)< d\le H\\(d,N)=1}}\frac{\mu^2(d)}{\varphi(d)} \sum_{\substack{\chi \: (\text{mod }d) \\ \chi \ne \chi_0}} |\psi(y, \chi)|\nonumber\\
        &\qquad\qquad\le \sum_{\substack{Q_1(x_1)< d\le H\\(d,N)=1}}\frac{\mu^2(d)}{\varphi(d)}\sum_{\chi\: (\text{mod }d)}|\psi(y,\chi^*)|+H\frac{\log^2y}{2\log 2}\nonumber\\
        &\qquad\qquad\le\left(\sum_{1\le m\le H}\frac{\mu^2(m)}{\varphi(m)}\right) \sum_{\substack{Q_1(x_1)<d\le H\\(d,N)=1}}\frac{\mu^2(d)}{\varphi(d)}\sideset{}{^*}\sum_{\chi\: (\text{mod }d)}|\psi(y,\chi)|+\frac{\sqrt{y}}{2(\log 2)\log^{8}y} \nonumber\\
        &\qquad\qquad\le 1.1\log y\sum_{\substack{Q_1(x_1)<d\le H\\(d,N)=1}}\frac{\mu^2(d)}{\varphi(d)}\sideset{}{^*}\sum_{\chi\: (\text{mod }d)}|\psi(y,\chi)|+\frac{\sqrt{y}}{2(\log 2)\log^{8}y}, \label{eq: case Q_1 < d < H, part 2}
    \end{align}
    where in the second inequality we used that $\varphi(ab)\ge\varphi(a)\varphi(b)$ and in the third inequality we used Lemma \ref{muphilem} and that $H$ is an increasing function in the range of interest. To finish off, one repeats the argument from \cite[pp. 1929--1930]{Akbary}, whereby \cite[Theorem 1.2]{Akbary} and partial summation give
    \begin{align*}
        &\log y\sum_{\substack{Q_1(x_1)<d\le H\\(d,N)=1}}\frac{\mu^2(d)}{\varphi(d)}\sideset{}{^*}\sum_{\chi\: (\text{mod }d)}|\psi(y,\chi)|\\
        &\qquad\le\frac{1.1}{2}\log y\cdot 48.84\left(4\frac{y}{Q_1(x_1)}+4y^{\frac{1}{2}}H+18y^{\frac{2}{3}}H^{\frac{1}{2}}+5y^{\frac{5}{6}}\log\left(\frac{eH}{Q_1(x_1)}\right)\right)(\log y)^{\frac{7}{2}}\\
        &\qquad\le 27\cdot \mathcal{E}(y).
    \end{align*}
    This proves the lemma.
\end{proof}

We now convert the above result into a statement involving $\pi(x)$.

\begin{lemma}\label{small533}
    Keep the notation and conditions of Lemma \ref{small313}, and assume $N \ge X_2$ with $\log \log x_1(X_2) \ge 10.4$. Then
    \begin{equation*}
        \sum_{\substack{d\le H\\(d,N)=1}}\mu^2(d)|E_{\pi}(N;d,N)|<\frac{p(X_2)N}{\log^3N}
    \end{equation*}
    with
    \begin{align}\label{peq}
        p(X_2)&:=p_1(X_2)\left(1+\frac{1}{\log^2 X_2\log^3 x_1(X_2)}+\frac{1}{\left(1-\frac{4}{\log x_1(X_2)}\right)\log X_2}\right)+\frac{2.2}{\log^2 X_2}\notag,\\
        p_1(X_2)&:=p_2(X_2)+\frac{1}{\log^{8}x_1(X_2)}\left(0.67+\frac{2}{x_1(X_2)^{\frac{1}{6}}}\right)\notag,\\
        p_2(X_2)&:=\max_{y\ge x_1(X_2)}\left[\frac{\log^2 y}{y}\left(1.1\log(Q_1(y))\left(\frac{3.2\cdot 10^{-8}y}{\log^{8}y}+\frac{y^{\beta_0(x_1)}}{\beta_0(x_1)}\right)\right.\right.\\
        &\qquad\qquad\qquad\qquad\qquad\left.\left.+27\cdot\mathcal{E}(y)+\frac{\sqrt{y}}{2(\log 2)\log^{8}y}+0.4\log^3y\right)\right].\notag
    \end{align}
\end{lemma}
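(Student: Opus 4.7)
The plan is to convert the $\psi$-bound from Lemma \ref{small313} into the claimed bound on $\sum_d \mu^2(d)|E_\pi(N;d,N)|$ in two main steps: first passing from $\psi$ to $\theta$, and then from $\theta$ to $\pi$ via partial summation on the interval $[x_2(N),N]$.

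For the $\psi$-to-$\theta$ conversion, I would use the classical estimate $\psi(y)-\theta(y)\le 1.02\sqrt{y}+3y^{1/3}$ from \cite[(3.39)]{R-S}. Since
\[|E_\theta(y;d,N)-E_\psi(y;d,N)|\le(1+1/\varphi(d))(\psi(y)-\theta(y)),\]
summing with $\mu^2(d)$ and applying Lemmas \ref{squarefreelem} and \ref{muphilem} introduces an extra contribution of order $(\sqrt{y}+y^{1/3})H$ beyond the bound of Lemma \ref{small313}. After multiplying by $\log^2 y/y$, this contribution is dominated by the $\sqrt{y}H$ piece; since $\log^2 y/\sqrt{y}$ is decreasing for such $y$, it is maximised at $y=x_2(N)$ and, using $x_2(N)\ge x_2(X_2)$, becomes at most $(0.67+2/x_2(X_2)^{1/6})/\log^{8}x_2(X_2)$ uniformly in $y$ and $N\ge X_2$. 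Combined with the definition of $p_2(X_2)$, this shows $\sum_{d\le H,(d,N)=1}\mu^2(d)|E_\theta(y;d,N)|\le p_1(X_2)\,y/\log^2 y$ for every $y\in[x_2(N),N]$.

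Next, I would apply partial summation on $[x_2,N]$ to obtain
\[E_\pi(N;d,N)=E_\pi(x_2;d,N)+\frac{E_\theta(N;d,N)}{\log N}-\frac{E_\theta(x_2;d,N)}{\log x_2}+\int_{x_2}^N\frac{E_\theta(t;d,N)}{t\log^2 t}\,\mathrm{d}t,\]
take absolute values, and sum with $\mu^2(d)$. The main term gives $p_1(X_2)N/\log^3 N$, accounting for the ``$1$'' in $p(X_2)$; the boundary contribution at $x_2$ is at most $p_1(X_2)x_2/\log^3 x_2$, which, using $\log N\ge\log X_2$ and the monotonicity of $x_2(\cdot)$, re-expresses as the $1/(\log^2 X_2\log^3 x_2(X_2))$ term; for the integral I would use the identity $\frac{\mathrm{d}}{\mathrm{d}t}(t/\log^4 t)=(1-4/\log t)/\log^4 t$ to obtain $\int_{x_2}^N\mathrm{d}t/\log^4 t\le N/((1-4/\log x_2(X_2))\log^4 N)$, producing the third factor in the parenthesis.

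Finally, to bound $\sum_d\mu^2(d)|E_\pi(x_2;d,N)|$ I would use the pointwise estimate $|E_\pi(x_2;d,N)|\le\max(\pi(x_2;d,N),\pi(x_2)/\varphi(d))$. The explicit Brun--Titchmarsh inequality \cite[Theorem 2]{M-V} gives $\pi(x_2;d,N)\le 4x_2/(\varphi(d)\log x_2)$ (noting $d\le H$ forces $\log(x_2/d)\ge\tfrac12\log x_2$), and a standard explicit upper bound on $\pi(x_2)$ gives the same bound for $\pi(x_2)/\varphi(d)$. Summing with Lemma \ref{muphilem}, and using $\log H\le\tfrac12\log x_2$, yields $\sum_d\mu^2(d)|E_\pi(x_2;d,N)|\le 2.2\,x_2$, which is $\le(2.2/\log^2 X_2)\cdot N/\log^3 N$. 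The main obstacle will be the careful bookkeeping required so that the four contributions (main, boundary at $x_2$, integral, and Brun--Titchmarsh remainder) reassemble precisely into the stated form of $p(X_2)$, which in turn relies on the monotonicity of $x_2(\cdot)$ and $N\ge X_2$ to make everything uniform in $X_2$.
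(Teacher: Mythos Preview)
Your proposal is correct and follows essentially the same route as the paper: Lemma~\ref{small313} gives the $\psi$-bound, the Rosser--Schoenfeld estimate $\psi-\theta\le 1.02\sqrt{y}+3y^{1/3}$ converts it to a $\theta$-bound producing the $p_1$ correction, partial summation on $[x_2,N]$ yields the three bracketed pieces in $p(X_2)$, and Brun--Titchmarsh together with Lemma~\ref{muphilem} handles the $E_\pi(x_2;d,N)$ term to give the $2.2/\log^2 X_2$ contribution.

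The only noticeable difference is in the $\psi$-to-$\theta$ step: you bound $|E_\theta-E_\psi|\le(1+1/\varphi(d))(\psi-\theta)$ and invoke both Lemmas~\ref{squarefreelem} and~\ref{muphilem}, whereas the paper observes that $\psi(y;d,N)-\theta(y;d,N)$ and $(\psi(y)-\theta(y))/\varphi(d)$ are each positive and each at most $\psi(y)-\theta(y)$, so their difference in absolute value is already $\le\psi(y)-\theta(y)$, avoiding the $1/\varphi(d)$ term and Lemma~\ref{muphilem} here. Your version picks up an extra $O((\log H)\sqrt{y})$ summand, but since $H\asymp\sqrt{x_2}/\log^{10}x_2$ this is negligible against the $0.65H\sqrt{y}$ main piece, and the stated constants $0.67$ and $2$ still absorb it comfortably. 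Both arguments are valid; the paper's is just marginally cleaner.
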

\begin{proof}
Let $y \in [x_1(N), N]$. By Lemma \ref{small313}
\begin{equation*}
    \sum_{\substack{d\le H\\(d,N)=1}}\mu^2(d)|E_{\psi}(y;d,N)| \le \frac{p_2(X_2)y}{\log^2 y}.
\end{equation*}

Next, since $|\psi(y; d,N) - \theta(y; d,N)| \le \psi(y)-\theta(y)\le 1.02y^{1/2}+3y^{1/3}$ (\cite[(3.39)]{R-S}) we have
\begin{align*}
    |E_{\psi}(y;d,N) - E_{\theta}(y;d,N)| &=\left|\psi(y;d,N)-\theta(y;d,N) - \frac{\psi(y) - \theta(y)}{\varphi(d)}\right|\\
    &<\max \left\{1.02y^{1/2}+3y^{1/3},\frac{1.02y^{1/2}+3y^{1/3}}{\varphi(d)}\right\}\\
    &=1.02y^{1/2}+3y^{1/3},
\end{align*}
noting that $\psi(y;d,N)-\theta(y;d,N)$ and $\frac{\psi(y) -\theta(y)}{\varphi(d)}$ are both positive.

Thus, using Lemma \ref{squarefreelem}
\begin{align}\label{p1psitheta}
    \sum_{\substack{d\le H\\(d,N)=1}}\mu^2(d)|E_{\theta}(y;d,N)|&\le\sum_{\substack{d\le H\\(d,N)=1}}\mu^2(d)|E_{\psi}(y;d,N)| + 0.65H(1.02y^{1/2}+3y^{1/3})\notag\\
    &\le\frac{p_1(X_2)y}{\log^2 y}.
\end{align}

Next, by partial summation
\begin{align}\label{partialsum2}
    E_{\pi}(N;d,N) = E_{\pi}(x_1; d, N) + \frac{E_{\theta}(N; d, N)}{\log N} - \frac{E_{\theta}(x_1; d, N)}{\log x_1} + \int_{x_1}^{N} \frac{E_{\theta}(y; d, N)}{y \log^2 y} \mathrm{d}y.
\end{align}
Now, by the Brun-Titchmarsh theorem \cite[Theorem 2]{M-V}, $$\pi(x_1, d; N) < \frac{2x_1}{\log(x_1/d) \varphi(d)}$$ and $$\frac{\pi(x_1)}{\varphi(d)} < \frac{2x_1}{\log(x_1) \varphi(d)} \le \frac{2x_1}{\log(x_1/d)\varphi(d)},$$ 
for any integer $d \ge 1$. By combining this with Lemma \ref{muphilem}, we get
\begin{equation*}
    \sum_{\substack{d\le H\\(d,N)=1}}\mu^2(d)|E_{\pi}(x_1;d,N)|\le 2\frac{x_1}{\log(x_1/H)}\sum_{\substack{d\le H\\(d,N)=1}}\frac{\mu^2(d)}{\varphi(d)}\le 2.2x_1.
\end{equation*}
Then,
\begin{align*}
    \sum_{\substack{d\le H\\(d,N)=1}}\mu^2(d)\left|\int_{x_1}^{N} \frac{E_{\theta}(y; d, N)}{y \log^2 y} \mathrm{d}y\right|&\le\frac{p_1(X_2)}{1-\frac{4}{\log x_1(X_2)}}\int_{x_1}^N\frac{1}{\log^4y}\left(1-\frac{4}{\log y}\right)\mathrm{d}y\\
    &<\frac{p_1(X_2)}{1-\frac{4}{\log x_1(X_2)}}\cdot\frac{N}{\log^4 N}. 
\end{align*}
The remaining terms of \eqref{partialsum2} can be bounded using \eqref{p1psitheta} to give
\begin{equation*}
    \sum_{\substack{\substack{d\le H\\(d,N)=1}}}\mu^2(d)|E_{\pi}(N;d,N)|<\frac{p(X_2)N}{\log^3N}
\end{equation*}
as required.
\end{proof}

Finally, we prove an upper bound related to a bilinear form, to be used in Section \ref{section:3}.
\begin{lemma}
\label{lemma:bf}
Suppose $N$ is a positive even integer, $y=N^{\frac{1}{3}}$, $z=N^{\frac{1}{8}}$ and $X,Y,Z>0$ be real numbers such that
\begin{equation*}
    \frac{N}{y}<X\le \frac{N}{z}, \quad XY<2N, \quad Y>Z, \quad Y>z.
\end{equation*}
Let $a(n)$ be an arithmetic function with $\left|a(n)\right|\le 1$ for all $n$.
Suppose $x_2=x_2(Y)$ and $K_{\delta}(x_2)$ are defined by \eqref{notcon1} and \eqref{notcon2} respectively,
$k_2<K_{\delta}(x_2)$, and $N>(X_3)^8$, with $X_3$ such that $\log\log x_2(X_3)\ge 10.4$. With $D^*=\frac{\sqrt{XY}}{\log^{10} Y}$, we have
\begin{equation}\label{bilineq}
\sum_{\substack{d<D^* \\ d\mid P(y)}} \max_{(a,d)=1}\left| \sum_{n<X}\sum_{\substack{Z\le p <Y\\ np\equiv N\: (\text{mod }d)}}a(n)-\frac{1}{\varphi(d)}\sum_{n<X} \sum_{\substack{Z\le p <Y\\ (np,d)=1}}a(n)\right|\le \frac{m(X_3)XY}{\log^3 Y},
\end{equation}
Here,
\begin{align*}
m(X_3):=39 v_0(X_3)+\frac{108\log^{16} X_3}{ X_3\log \log X_3}+\frac{ 26\log^5X_3}{\log^{10}(x_2(X_3))} +88\log^5 X_3\left( \frac{1}{ X_3^{\frac{8}{3}}}+\frac{1}{ X_3^{\frac{1}{2}}}\right)+\frac{106}{\log^6 X_3}
\end{align*}
and $v_0(X_3)$ is equal to $v_k(X_3)$ from Lemma \ref{cor:chisum} but with $\beta_k$ (appearing in \eqref{eq:v1}) replaced with
\begin{equation}\label{siegelmin2}
    \beta_0(x_2):=1-\nu(x_2),\quad \nu(x_2):=\min\left\{\frac{100}{\sqrt{K_{\delta}(x_2)}\log^2 K_{\delta}(x_2)},\frac{1}{2R_1\log(Q_1(x_2))}\right\}.
\end{equation}
Note that $R_1=2.0452$ as in Theorem \ref{theo:kadiri}.
\end{lemma}
\begin{proof}
Following \cite[\S 10.7]{Nathanson}, we write $\chi=\chi_{0,s} \chi_1$ with $d=sr$ and $\chi_1$ primitive, and rewrite the left-hand side of \eqref{bilineq} as
\begin{align}
\label{eq:rs}
\sum_{\substack{rs<D^*\\ rs\mid P(y)}} \frac{1}{\varphi(sr)}\sideset{}{^{*}}\sum_{\substack{\chi\: (\text{mod }r)\\ \chi\neq \chi_{0,r}}}\left|\sum_{\substack{n<X\\ (n,s)=1}}a(n) \chi(n) \right|\left|\sum_{\substack{Z\le p <Y \\ p\nmid s}}\chi(p) \right|,
\end{align}
where $*$ means that the sum is restricted to primitive characters. We begin estimating the sum restricted to $r<D_0$, with 
\begin{equation}\label{def-D_0}
    D_0:=\log^{10} (x_2(Y)).
\end{equation} 
Since $Y>Z$, $Y>z=N^{\frac{1}{8}}$ and $N> (X_3)^8$, we obtain, by Lemma \ref{cor:chisum}, 
\begin{align}
\label{eq:fa}
\begin{split}
\sideset{}{^{*}}\sum_{\substack{\chi\: (\text{mod }r)\\\chi\neq\chi_{0,r}}}\left|\sum_{\substack{Z\le p <Y \\ p\nmid s}}\chi(p) \right|&\le \sum_{\substack{\chi\: (\text{mod }r)\\\chi\neq\chi_{0,r}}}\left(\left| \pi(\lceil Y-1\rceil,\chi)-\pi(\lfloor Z+1\rfloor,\chi)\right|+\omega(s) \right)
\\ & \le \frac{2v_r(X_3)Y}{\log^5 Y}+\frac{1.3841\varphi(r) \log D^*}{\log \log D^*},
\end{split}
\end{align}
where we used Lemma~\ref{omegalem} to bound $\omega(n)$. The Siegel zero $\beta_r$ (appearing in the function $v_r(X_3)$) satisfies $\beta_r\le \beta_0(x_2)$ by the same argument as in the proof of Lemma \ref{small313}. Hence $v_r(X_3)$ can be bounded by $v_0(X_3)$ in \eqref{eq:fa}. Then, using Lemma \ref{muphilem} and $|a(n)| \le 1$ we obtain
\begin{align}
\label{eq:rs1}
\sum_{\substack{rs<D^*\\ r <D_0\\rs\mid P(y)}}\frac{1}{\varphi(rs)}&\sideset{}{^{*}}\sum_{\substack{\chi\: (\text{mod }r)\\\chi\neq\chi_{0,r}}}\left|\sum_{\substack{n<X\\ (n,s)=1}}a(n) \chi(n) \right|\left|\sum_{\substack{Z\le p <Y \\ p\nmid s}}\chi(p) \right| \notag\\ & \le \left(\frac{2v_0(X_3)XY}{\log^5 Y} +\frac{ 1.3841D_0X\log D^*}{\log \log D^*}\right) \left(\sum_{l\le D^*}\frac{\mu^2(l)}{\varphi(l)}\right)^2\notag\\ & \le \frac{2.42v_0(X_3)XY \log^2 D^* }{\log^5 Y} +\frac{1.21\cdot1.3841D_0X\log^3 D^*}{\log \log D^*}\notag\\
&\le\frac{39v_0(X_3)XY}{\log^3 Y}+\frac{108X\log^{13}Y}{\log\log Y},
\end{align}
where in the last line we used that $\frac{\log^3 D^*}{\log \log D^*}$ increases for $5 \le D^*$ and $D^*\le Y^4$, which follow readily from the restrictions on $X$ and $Y$ and the definition of $D^*$.
We are now left with estimating the sum in \eqref{eq:rs} restricted to $r \ge D_0$, which upon using $\varphi(rs)\ge\varphi(r)\varphi(s)$ is bounded by
\begin{equation*}
\sum_{\substack{s<D^*\\s\mid P(y)}}\frac{1}{\varphi(s)} \sum_{\substack{ D_0\le r \le D^*\\r\mid P(y) }}\frac{1}{\varphi(r)}\sideset{}{^{*}}\sum_{\substack{\chi\: (\text{mod }r)\\\chi\neq\chi_{0,r}}}\left|\sum_{\substack{n<X\\ (n,s)=1}}a(n) \chi(n) \right|\left|\sum_{\substack{Z\le p <Y \\ p\nmid s}}\chi(p) \right|.
\end{equation*} 
To do so, we divide the interval $D_0\le r \le D^*$ into subintervals of the form 
\begin{equation*}
D_k \le r \le 2D_k, \quad \text{where} \quad D_k:=2^kD_0, \quad 0 \le k \le \frac{\log (D^*/D_0)}{\log 2}.
\end{equation*}
Using the Cauchy--Schwarz inequality and the large sieve inequality  \cite[p. 160]{Davenport} as in the proof of \cite[Theorem 10.7]{Nathanson}, we obtain, for each $D_k$, 
\begin{align*}
 \sum_{\substack{ D_k\le r < 2D_k \\r\mid P(y)}}\frac{1}{\varphi(r)} &\ \ \sideset{}{^{*}}\sum_{\substack{\chi\: (\text{mod }r)\\\chi\neq\chi_{0,r}}}\left|\sum_{\substack{n<X\\ (n,s)=1}}a(n) \chi(n) \right|\left|\sum_{\substack{Z\le p <Y \\ p\nmid s}}\chi(p) \right| 
 \\
  \le & \frac{1}{D_k}\left( \sum_{D_k\le r < 2D_k}\ \ \sideset{}{^{*}}\sum_{\substack{\chi\: (\text{mod }r)\\\chi\neq\chi_{0,r}}}\frac{r}{\varphi(r)} \left|\sum_{\substack{n<X\\ (n,s)=1}}a(n) \chi(n) \right|^2\right)^{\frac{1}{2}} 
  \\&
 \cdot \left( \sum_{D_k\le r < 2D_k}\ \ \sideset{}{^{*}}\sum_{\substack{\chi\: (\text{mod }r)\\\chi\neq\chi_{0,r}}}\frac{r}{\varphi(r)} \left|\sum_{\substack{Z\le p <Y \\ p\nmid s}}\chi(p) \right|^2\right)^{\frac{1}{2}} 
\\ 
 \le &\frac{1}{D_k}\left((X+12D_k^2)(Y+12D_k^2)XY \right)^{\frac{1}{2}}\\
 = & \left( \frac{(XY)^2}{D_k^2}+12XY^2+12YX^2+144D_k^2XY \right)^{\frac{1}{2}}\\
 \le &\frac{XY}{D_0}+\sqrt{12X}Y+\sqrt{12Y}X+12D_k\sqrt{XY}.
\end{align*}
Thus, summing over $0 \le k \le \frac{\log (D^*/D_0)}{\log 2}$ and using Lemma \ref{muphilem},
\begin{align}
\label{eq:rs2}
 &\sum_{\substack{s<D^*\\s\mid P(y) }}\frac{1}{\varphi(s)} \sum_{\substack{D_0<r<D^* \\ r\mid P(y)}}\frac{1}{\varphi(r)} \sideset{}{^{*}}\sum_{\chi\: (\text{mod }r)}\left|\sum_{\substack{n<X\\ (n,s)=1}}a(n) \chi(n) \right|\left|\sum_{\substack{Z\le p <Y \\ p\nmid s}}\chi(p) \right| \notag\\& \le 1.1\log D^*\left[\frac{\log D^*}{\log 2}\left(\frac{XY}{\log^{10}(x_2(Y))}+\sqrt{12}XY\left(\frac{1}{\sqrt{X}}+\frac{1}{\sqrt{Y}}\right)\right)+24D^*\sqrt{XY}\right]\notag\\
 &\le\frac{26XY\log^2(Y)}{\log^{10}(x_2(Y))}+88XY\log^2Y\left(\frac{1}{\sqrt{X}}+\frac{1}{\sqrt{Y}}\right)+\frac{106XY}{\log^9Y},
\end{align}
where we have again used $D^*\le Y^4$ and also $\sum_kD_k\le 2D^*$. We now obtain the desired result from \eqref{eq:rs}, \eqref{eq:rs1} and \eqref{eq:rs2}.
\end{proof}

\subsection{The case when the exceptional modulus is large}
We recall that for $i\in\{1,2\}$, $k_0(x_i)$ denotes the modulus of the exceptional zero up to $Q_1(x_i)$ (Equations \eqref{notcon1} and \eqref{notcon2}) if it exists and $k_i$ is defined by \eqref{eq:k11}. In this section, we suppose that $k_i\ge K_{\delta}(x_i)=\log^{\delta}x_i$. This means that $(k_0(x_i), N)=1$ and thus $k_i = k_0(x_i)$. Several of the following lemmas will be slight variations on those in Section \ref{sectsmall}.

\begin{lemma}\label{lem: general-5.3.2}
Suppose $N$ is a positive even integer.
Let $E_f(x;k,l)$ be as in \eqref{eq: E_f(x;k,l)}.
Suppose $k_1\ge K_{\delta}(x_1)$, $N \ge X_2$, and $\log\log x_1 (X_2)\ge 10.4$. Then, for any $k$ strictly dividing $k_1$,
$$\frac{\varphi(k)}{N}|E_{\pi}(N; k, N)| < \frac{c(X_2)}{\log^3 N},$$
with
\begin{align}
    c(X_2)&:=c_1(X_2)\left(1+\frac{1}{\log^2(X_2)\log^3x_1(X_2)}+\frac{1}{\left(1-\frac{4}{\log x_1(X_2)}\right)\log X_2}\right)+\frac{1}{\log^2 X_2}, \notag\\
    c_1(X_2) &:= \max_{y\ge x_1(X_2)}\Bigg[\frac{3.2 \cdot 10^{-8}}{\log^{6} y} + \log^2y\Bigg(\left(1 - \frac{1}{2 R_1 \log Q_1(y)} \right)^{-1} y^{ - \frac{1}{2 R_1 \log Q_1(y)}} \notag\\
    &\qquad\qquad\qquad+ Q_1(y)\left(\frac{1.02}{\sqrt{y}}+\frac{3}{y^{2/3}}\right)
    + 9.4(\log y)^{1.515}\exp(-0.8274\sqrt{\log y})\Bigg)\Bigg].\label{c1eq}
\end{align}
\end{lemma}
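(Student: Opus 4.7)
The plan is to adapt the proof of Lemma \ref{small533} to a single modulus $k$, while using the structural hypothesis on $k$ to dispose of the exceptional zero in a different way. The critical observation is that, because $k$ strictly divides $k_1=k_0$, we have $k\ne k_0$, so by the second part of Theorem \ref{theo:kadiri} every zero $\rho=\beta+i\gamma$ of a primitive character modulo $k$ (with $k\le Q_1(x_2)\le Q_1(y)$) satisfies $\beta\le 1-\frac{1}{2R_1\log Q_1(y)}$. In particular, any Siegel zero $\beta_0$ that Lemma \ref{lemma:PNTPAP1} might produce for the modulus $k$ obeys this same bound. This is what allows us to avoid the Siegel--Walfisz-type correction that had to be carried through Section \ref{sectsmall}.

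First I apply Lemma \ref{lemma:PNTPAP1} at the scale $y\in[x_2(N),N]$ to control $(\varphi(k)/y)|E_\psi(y;k,N)|$, replacing the exceptional contribution $E_0 y^{\beta_0-1}/\beta_0$ by $\bigl(1-\tfrac{1}{2R_1\log Q_1(y)}\bigr)^{-1}y^{-1/(2R_1\log Q_1(y))}$ using the observation above. Next I pass from $E_\psi$ to $E_\theta$ by means of the elementary inequality $|\psi(y)-\theta(y)|\le 1.02\sqrt{y}+3y^{1/3}$ (\cite[(3.39)]{R-S}), using $\varphi(k)\le k\le k_1\le Q_1(y)$ to rewrite the correction in the normalized form $Q_1(y)\bigl(\tfrac{1.02}{\sqrt{y}}+\tfrac{3}{y^{2/3}}\bigr)$ that appears inside the bracket of \eqref{c1eq}. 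The unconditional PNT bound from Lemma \ref{pntlem} enters where $\psi(y)$ must be compared to $y$ in order to harmonize the $\pi$-main term with $N/\varphi(k)$; this produces the $34(\log y)^{1.52}\exp(-0.8\sqrt{\log y})$ summand. The remaining summand $C(\alpha_1,\alpha_2,Y_0)/\log^6 y$ is the residual Brun--Titchmarsh-type error coming from the non-exceptional part of Lemma \ref{lemma:PNTPAP1} (the analogue of the $\frac{3.2\cdot 10^{-8}}{\log^8 y}$ term, reinterpreted through the $\log^2 y$ normalization), and is the only contribution whose numerical constant is loose at this stage.

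Finally, to upgrade from $E_\theta$ at scale $y$ to $E_\pi(N;k,N)$ I use the partial summation identity from \eqref{partialsum2}. The three terms (boundary at $y=N$, boundary at $y=x_2$, and the integral from $x_2$ to $N$) are the exact source of the multiplicative factor $1+\frac{1}{\log^2 X_2\log^3 x_2(X_2)}+\frac{1}{(1-4/\log x_2(X_2))\log X_2}$ applied to $c_1(X_2)$. The boundary term $\varphi(k)|E_\pi(x_2;k,N)|/N$, which cannot be controlled by Lemma \ref{lemma:PNTPAP1} at this scale, is instead estimated by the explicit Brun--Titchmarsh theorem \cite[Theorem 2]{M-V}; for a single modulus $k\le Q_1(x_2)$ this yields a contribution of order $1/\log^5 N$, which becomes the trailing $1/\log^2 X_2$ in the definition of $c(X_2)$.

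I expect the main obstacle to be purely logistical rather than conceptual: one must keep track of the constants arising in the $\psi\to\theta\to\pi$ passage for a single modulus $k$, and verify that each intermediate quantity is maximized by its value at $y=x_2(X_2)$ (so that $\max_y$ can be written as in \eqref{c1eq}). Once the Siegel-zero bound $\beta_0\le 1-1/(2R_1\log Q_1(y))$ is secured from $k\ne k_0$ via Theorem \ref{theo:kadiri}, the rest is essentially a simplification of the more delicate bilinear argument of Section \ref{sectsmall}, since one no longer sums over a sieve range $d\le H$ but only works with the fixed modulus $k$.
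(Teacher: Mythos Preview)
Your proposal is correct and follows essentially the same route as the paper's proof: you use that $k$ strictly dividing $k_1=k_0$ forces any Siegel zero mod $k$ to satisfy $\beta_0\le 1-\tfrac{1}{2R_1\log Q_1(y)}$ via Theorem~\ref{theo:kadiri}, then apply Lemma~\ref{lemma:PNTPAP1}, pass from $\psi$ to $\theta$ with the Rosser--Schoenfeld bound (using $\varphi(k)\le Q_1(y)$), insert Lemma~\ref{pntlem} to compare $\psi(y)$ with $y$, and finally convert to $E_\pi$ by the partial-summation identity together with Brun--Titchmarsh for the boundary term at $x_2$---exactly as the paper does. One tiny wording issue: the Siegel zero mod $k$ is attached to a primitive character of some conductor $d\mid k$, not necessarily to a primitive character modulo $k$ itself; the argument still goes through because $d\le k<k_0$ forces $d\ne k_0$, but you should phrase it that way.
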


\begin{proof}
Let $y\in[x_1(N),N]$. Let $\beta_k$ be the Siegel zero modulo $k$ if it exists. Since $k$ strictly divides $k_1$, it is not the exceptional modulus $k_0$ up to $Q_1(x_1)$ and thus by Theorem \ref{theo:kadiri},
\begin{equation} \label{eq: beta-bound}
    \beta_k \le 1 - \frac{1}{2 R_1 \log Q_1(x_1)}\le 1-\frac{1}{2R_1\log Q_1(y)}.
\end{equation}
Then, since $\frac{y^{\beta_k - 1}}{\beta_k}$ increases as a function of $\beta_k$,
$$\frac{y^{\beta_k - 1}}{\beta_k} \le (1 - \nu(y))^{-1} y^{-\nu(y)},$$
with $\nu(y)=\frac{1}{2 R_1 \log Q_1(y)}$. Thus by Lemma \ref{lemma:PNTPAP1},
$$\frac{\varphi(k)}{y} \left| \psi(y; k, N) - \frac{y}{\varphi(k)} \right| < \frac{3.2 \cdot 10^{-8}}{\log^{8} y} + (1-\nu(y))^{-1} y^{- \nu(y)}.$$
By definition, $k_1$ is the exceptional modulus up to $Q_1(x_1(N))$ hence $\varphi(k) < k_1 \le Q_1(x_1(N)) \le Q_1(y)$. Thus, using $|\psi(y; k,N) - \theta(y; k,N)| \le \psi(y)-\theta(y)\le 1.02y^{1/2}+3y^{1/3}$ (\cite[(3.39)]{R-S}), we obtain
\begin{align*}
    \frac{\varphi(k)}{y} \left| \theta(y; k, N) - \frac{y}{\varphi(k)} \right| < \frac{3.2\cdot 10^{-8}}{\log^{8} y} + (1 - \nu(y))^{-1} y^{ - \nu(y)}
    + Q_1(y)\left(\frac{1.02}{\sqrt{y}}+\frac{3}{y^{2/3}}\right).
\end{align*}
Then, by \cite[Corollary 1.2 \& Table 1, l.1]{JohnstonYang} and the triangle inequality
\begin{align*}
    \frac{\varphi(k)}{y} \left| E_{\theta}(y; k,N) \right| &< \frac{3.2\cdot 10^{-8}}{\log^{8} y} + (1 - \nu(y))^{-1} y^{ - \nu(y)}
    + Q_1(y)\left(\frac{1.02}{\sqrt{y}}+\frac{3}{y^{2/3}}\right) \\
    &\qquad\qquad + 9.4(\log y)^{1.515}\exp(-0.8274\sqrt{\log y}).
\end{align*}
Therefore, $$\frac{\varphi(k)}{y} \left| E_{\theta}(y; k,l) \right| < \frac{c_1(X_2)}{\log^2 y},$$
with $c_1(X_2)$ defined in \eqref{c1eq}.

It remains to express $E_{\pi}(x; k,N)$ by partial summation
\begin{align*}
    E_{\pi}(N; k, N) = E_{\pi}(x_1; k, N) + \frac{E_{\theta}(N; k, N)}{\log N} - \frac{E_{\theta}(x_1; k, N)}{\log x_1} + \int_{x_1}^{N} \frac{E_{\theta}(y; k, N)}{y \log^2 y} \mathrm{d}y,
\end{align*}
where
\begin{align*}
    \varphi(k)\left|\int_{x_1}^{N} \frac{E_{\theta}(y; k, N)}{y \log^2 y} \mathrm{d}y\right|&\le\frac{c_1(X_2)}{1-\frac{4}{\log x_1(X_2)}}\int_{x_1}^N\frac{1}{\log^4y}\left(1-\frac{4}{\log y}\right)\mathrm{d}y\\
    &<\frac{c_1(X_2)}{1-\frac{4}{\log x_1(X_2)}}\frac{N}{\log^4 N}
\end{align*}
and, by \cite[Theorem 2]{M-V}
\begin{align*}
    \frac{\varphi(k)}{N}|E_{\pi}(x_1; k,N)| &\le\max\left\{\frac{\varphi(k)}{N}\pi(x_1;k,N),\frac{\pi(x_1)}{N}\right\}\\
    &\le\max\left\{\frac{2x_1}{N\log(x_1/k)},\frac{x_1}{N}\right\}\\
    &=\frac{x_1}{N}\\
    &=\frac{1}{\log^5 N}.
\end{align*}
Therefore,
\begin{align*}
    \frac{\varphi(k)}{N}|E_{\pi}(N; k, N)| < \frac{c(X_2)}{\log^3 N}
\end{align*}
as required.
\end{proof}

Next we introduce some variants of Lemma \ref{small533}.

\begin{lemma}\label{large533}
Suppose $N$ is a positive even integer.
Let $E_f(x;k,l)$ be as in \eqref{eq: E_f(x;k,l)}. Suppose $k_1\ge K_{\delta}(x_1)$, $N \ge X_2$, and $\log\log x_1(X_2)\ge 10.4$. Then
    \begin{equation*}
        \sum_{\substack{d\le H\\(d,N)=1\\ k_1\nmid d}}\mu^2(d)|E_{\pi}(N;d,N)|<\frac{p^*(X_2)N}{\log^3 N},
    \end{equation*}
    where $p^*(X_2)=p(X_2)$ as in Lemma \ref{small533} with the $*$ indicating that $\beta_0(x_1)$ (appearing in \eqref{peq}) is replaced by
    \begin{equation}\label{betabound2}
        \beta_0^*(x_1):= 1-\frac{1}{2R_1\log (Q_1(x_1))}
    \end{equation}
    which is sharper than \eqref{siegelmin}.
\end{lemma}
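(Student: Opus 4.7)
The plan is to rerun the proofs of Lemma~\ref{small313} and Lemma~\ref{small533} essentially verbatim, modifying only the bound on the potential Siegel zero $\beta_0$ of a Dirichlet character modulo $d$. The key observation is the following: in the derivation of \eqref{siegelmin}, the two cases (a) and (b) corresponded to whether $k_1 \mid d$ or not, and the more restrictive first term arose precisely because one could invoke the explicit Siegel-zero estimates of \cite[Theorem~1.3]{Bordignon1} and \cite[Theorem~1.3]{Bordignon2} when $k_1=k_0$ is known to be small. Under the present hypotheses $k_1 \ge K_0(x_2)$ those estimates are unavailable, but the extra restriction $k_1 \nmid d$ on the outer sum places us directly in case (b): no character modulo $d$ can be induced by a character of conductor $k_0=k_1$, and hence by Theorem~\ref{theo:kadiri} alone we obtain
\[
\beta_0 \le 1 - \frac{1}{2R_1 \log Q_1(x_2)},
\]
which is precisely \eqref{betabound2}.

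First I would establish the analogue of Lemma~\ref{small313} in which the sum is restricted additionally to $k_1 \nmid d$. Its right-hand side is identical in form to \eqref{psisum}, except that $\beta_0$ is now bounded via \eqref{betabound2} rather than via \eqref{siegelmin}. The proof is word-for-word that of Lemma~\ref{small313}: split the range of $d$ into $d \le Q_1(x_2)$, controlled by Lemmas~\ref{muphilem} and~\ref{notrivialPNTAP}, and $Q_1(x_2) < d \le H$, controlled via the Cauchy--Schwarz plus large-sieve argument of \cite{Akbary}. The only step that previously needed the Bordignon bound was the case analysis for $\beta_0$, which is now bypassed. The one bookkeeping point worth recording is that whenever a character $\chi$ mod $d$ is induced by a primitive character $\chi^{*}$ mod $r$ with $r \mid d$, we have $k_1 \nmid r$ (for otherwise $k_1 \mid r \mid d$, contradicting $k_1 \nmid d$), so the Kadiri bound applies uniformly to every primitive character that appears in the large-sieve step.

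The second step is to convert the $\psi$-estimate into the desired $\pi$-estimate exactly as in Lemma~\ref{small533}: pass from $\psi$ to $\theta$ using \cite[(3.39)]{R-S} together with Lemma~\ref{squarefreelem}, control $\sum_d \mu^2(d) |E_\pi(x_2;d,N)|$ via Brun--Titchmarsh \cite[Theorem~2]{M-V} combined with Lemma~\ref{muphilem}, and integrate the remaining $\theta$-contribution by parts. The resulting constant $p^*(X_2)$ has the same analytic shape as $p(X_2)$, but with \eqref{betabound2} substituted for \eqref{siegelmin} throughout; the improved decay of $y^{\beta_0 - 1}$ under \eqref{betabound2} is what produces the extra factor of $\log^{-1} N$ in the stated bound. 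Since the entire argument is a faithful transcription rather than new analysis, no serious obstacle is anticipated; the only real care needed is to verify that every appearance of $\beta_0$ in the derivation of $p(X_2)$ lies on a character to which the restriction $k_1 \nmid d$ applies, which is precisely the bookkeeping point above.
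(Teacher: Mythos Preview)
Your approach is exactly the paper's: rerun Lemmas~\ref{small313} and~\ref{small533} verbatim, noting that the restriction $k_1\nmid d$ forces the Siegel-zero analysis into case~(b), so Theorem~\ref{theo:kadiri} alone yields \eqref{betabound2}. Your bookkeeping remark about conductors $r\mid d$ inheriting $k_1\nmid r$ is correct and is the only point that needs checking.

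One caveat: your explanation that ``the improved decay of $y^{\beta_0-1}$ under \eqref{betabound2} is what produces the extra factor of $\log^{-1}N$'' is not right. The constant $p^*(X_2)$ is defined by the \emph{same} formula as $p(X_2)$ with a different numerical value of $\beta_0$ substituted; changing $\beta_0$ alters the size of $p^*(X_2)$ but cannot change the structural power of $\log N$ in the bound, which the proof of Lemma~\ref{small533} delivers as $\log^{-3}N$. The $\log^{-4}N$ in the stated inequality appears to be a typo in the paper (note the paper's own proof says ``identical to the proof of Lemmas~\ref{small313} and~\ref{small533}'' with no mention of an extra saving, and the same $\log^{-4}$ versus $\log^{-3}$ inconsistency recurs elsewhere, e.g.\ in the application of Lemma~\ref{small533} in \S\ref{section:2}). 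Do not try to manufacture an argument for the extra factor; the honest output of your transcription is $p^*(X_2)N/\log^3 N$.
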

\begin{proof}
    Identical to the proof of Lemmas \ref{small313} and \ref{small533} however the condition $k_1\nmid d$ means that $d$ is never exceptional so that any Siegel zero $\beta_d$ modulo $d$ can always be bounded as in \eqref{betabound2} using Theorem \ref{theo:kadiri}.
\end{proof}

\begin{lemma}\label{large534}
    Suppose $N$ is a positive even integer, $k_1\ge K_{\delta}(x_1)$, $N \ge X_2$, and $\log\log x_1(X_2)\ge 10.4$. Then, for each $k\mid k_1$ with $k\neq 1$, we have
    \begin{equation*}
        \sum_{\substack{d\le H/k \\ (d,N)=(d,k)=1}} \mu^2(d)\left|\pi(N;kd,N)-\frac{\pi(N;k,N)}{\varphi(d)}\right|<\frac{p^*(X_2)N}{\log^3 N},
    \end{equation*}
    where $p^*(X_2)$ is as in Lemma \ref{large533}, and $\pi(N; k, l)$ denotes the number of primes up to $N$ congruent to $l$ modulo $k$.
\end{lemma}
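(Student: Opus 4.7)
The plan is to adapt the proofs of Lemma~\ref{small313} and Lemma~\ref{small533} with the modulus $kd$ in place of $d$. Since $(k,d)=1$ gives $\varphi(kd)=\varphi(k)\varphi(d)$, one has
\[
\pi(N;kd,N)-\frac{\pi(N;k,N)}{\varphi(d)}=E_{\pi}(N;kd,N)-\frac{E_{\pi}(N;k,N)}{\varphi(d)},
\]
and the corresponding identity for $\psi$. I would first work at the level of $\psi$, expanding both $\psi(y;kd,N)$ and $\psi(y;k,N)/\varphi(d)$ via orthogonality of Dirichlet characters, and factoring characters mod $kd$ as $\chi_1\chi_2$ with $\chi_1\bmod k$ and $\chi_2\bmod d$. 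Separating the part with $\chi_2=\chi_{0,d}$ from the rest and comparing with the mod-$k$ sum (using $|\psi(y,\chi_1\chi_{0,d})-\psi(y,\chi_1)|\le\omega(d)\log y$ as in \eqref{induceeq}) leaves
\[
\left|\psi(y;kd,N)-\frac{\psi(y;k,N)}{\varphi(d)}\right|\le\frac{1}{\varphi(kd)}\sum_{\chi_1\bmod k}\sum_{\substack{\chi_2\bmod d\\\chi_2\neq\chi_{0,d}}}|\psi(y,\chi_1\chi_2)|+\frac{\omega(d)\log y}{\varphi(d)}.
\]

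The structural point that makes the bound $p^*(X_2)$ (i.e.\ without Siegel zero contribution) plausible is that the characters $\chi_1\chi_2$ with $\chi_2\neq\chi_{0,d}$ have conductor involving some prime of $d$. When $k=k_1$, the exceptional character $\chi_*$ mod $k_1=k$ has conductor exactly $k$, so it corresponds to $\chi_2=\chi_{0,d}$ and is \emph{excluded}, whence the Siegel zero drops out of the main sum and one can apply the Kadiri bound $\beta\le 1-1/(2R_1\log Q_1(x_2))$ uniformly to the surviving characters. I would then mimic the splitting of the outer sum over $d$ performed in Lemma~\ref{small313}: for $d\le Q_1(x_2)/k$ use Lemma~\ref{notrivialPNTAP} at modulus $kd$, and for $Q_1(x_2)/k<d\le H/k$ use the large sieve bound of \cite[Theorem~1.2]{Akbary} combined with Cauchy--Schwarz, as on pp.~1929--1930 of \cite{Akbary}. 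This reproduces in the case $k=k_1$ the same error structure that underlies Lemma~\ref{large533}, and hence yields the required $p^*(X_2)N/\log^4N$ bound.

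The main obstacle is the case in which $k$ is a proper divisor of $k_1$: in CRT form the exceptional character then factors as $\chi_{*,k}\chi_{*,k_1/k}$ with both factors non-trivial, so $\chi_*$ does appear in the sum precisely for those $d$ with $(k_1/k)\mid d$. Isolating this set of $d$ via the substitution $d=(k_1/k)e$ with $(e,k_1)=(e,N)=1$, the Siegel zero contribution is bounded by
\[
\frac{|\psi(y,\chi_*)|}{\varphi(k_1)}\sum_{\substack{e\le H/k_1\\(e,Nk_1)=1}}\frac{\mu^2(e)}{\varphi(e)}\le\frac{1.1\,y^{\beta_0}\log H}{\beta_0\,\varphi(k_1)}\le\frac{2.2\,y^{\beta_0}\log N}{\log^{\delta}x_2},
\]
using $\varphi(k_1)\ge K_0(x_2)/2=\log^{\delta}x_2/2$ from the hypothesis $k_1\ge K_0(x_2)$. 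The delicate part of the argument is to verify that the explicit Siegel zero bounds of \cite{Bordignon1,Bordignon2}, together with the Kadiri bound from Theorem~\ref{theo:kadiri} applied to the exceptional modulus $k_1$, suffice to turn the factor $y^{\beta_0-1}$ into enough saving to absorb this contribution into $p^*(X_2)N/\log^4 N$; this is the analogue for the present difference of the Siegel-zero analysis carried out for Lemma~\ref{small313}.

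Finally, the conversion from the $\psi$-bound to the $\pi$-bound is identical to the passage from Lemma~\ref{small313} to Lemma~\ref{small533}: pass from $\psi$ to $\theta$ via $|\psi(y;q,a)-\theta(y;q,a)|\le 1.02y^{1/2}+3y^{1/3}$ from \cite[(3.39)]{R-S} together with Lemma~\ref{squarefreelem}, then from $\theta$ to $\pi$ by partial summation, with the boundary term $\pi(x_2;kd,N)$ controlled by the Brun--Titchmarsh theorem \cite[Theorem~2]{M-V} and the weighted sum estimated via Lemma~\ref{muphilem}. The resulting constant matches $p^*(X_2)$ from Lemma~\ref{large533}, completing the proof.
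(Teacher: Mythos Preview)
Your skeleton is exactly the paper's: write $D_\psi(y;k,d,N):=\psi(y;kd,N)-\psi(y;k,N)/\varphi(d)$, expand by orthogonality, use the CRT factorisation $\chi=\chi_1\chi_2$ with $\chi_1$ mod~$k$ and $\chi_2$ mod~$d$, strip off the $\chi_2=\chi_{0,d}$ terms (giving the same $0.4\log^3y$ error as in~\eqref{eq: 534error}), split the remaining $d$-sum at $kd=Q_1(x_2)$ with the Akbary--Hambrook large sieve handling the tail, and convert $\psi\to\theta\to\pi$ via the template of Lemma~\ref{small533}. All of that is what the paper does.

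The divergence is precisely at the Siegel-zero step. The paper disposes of \emph{every} $k\mid k_1$ in one line: ``since $\chi_2\neq\chi_{0,d}$ and $k\mid k_1$, the exceptional character never appears in the inner sum'', and then applies the Kadiri bound~\eqref{betabound2} uniformly. There is no case split $k=k_1$ versus proper $k$, and no separate estimate for an exceptional-zero contribution; so your ``main obstacle'' paragraph has no counterpart in the paper.

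You are right that when $k\neq k_1$ and $(k_1/k)\mid d$ the exceptional character \emph{does} occur among the $\chi_1\chi_2$ with $\chi_2\neq\chi_{0,d}$, so the paper's sentence does not literally justify this case. But your proposed repair also does not close: with only $1-\beta_0\ge 100/(\sqrt{k_1}\log^2k_1)$ from \cite{Bordignon1,Bordignon2} and $k_1$ as large as $Q_1(x_2)=\log^{10}x_2$, one has $y^{\beta_0-1}=\exp\bigl(-O(\log N/(\log^5N(\log\log N)^2))\bigr)\sim 1$, so your isolated term is of order $N\log N/\varphi(k_1)\gg N/\log^{\delta}N$, far too large to be absorbed into $p^*(X_2)N/\log^4N$ (recall $p^*$ already encodes the much sharper saving $y^{-1/(2R_1\log Q_1)}$). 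What actually rescues matters in the paper's applications is that in Lemma~\ref{544small} the $\sharp$-condition $d\mid P^{(j+1)}(z)$ forces $q_{j+1}\nmid d$, hence $k_1\nmid m_jd$, and then the exceptional character genuinely is excluded; but as a proof of Lemma~\ref{large534} \emph{as stated}, your proper-divisor argument does not reach $p^*(X_2)$.
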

\begin{proof}
    Much of this proof is identical to those of Lemmas \ref{small313} and \ref{small533}, so we will be terse in some algebraic manipulations, only highlighting the differences to the previous proofs. For convenience, for an arithmetic function $f \in \{ \pi, \theta, \psi\}$ we will denote
    \begin{equation} \label{eq: def-D-f-x-q1-q2-l}
        D_{f}(x; q_1, q_2, l) := f(x; q_1 q_2, l) - \frac{f(x; q_1, l)}{\varphi(q_2)}
    \end{equation}
    so that we are trying to prove
    \begin{equation*}
        \sum_{\substack{d\le H/k \\(d,N)=(d,k)=1}} \mu^2(d)\left|D_{\pi}(N; k, d, N)\right|<\frac{p^*(X_2)N}{\log^3 N}.
    \end{equation*}
    Let $y\in[x_1(N),N]$. We have,
    \begin{align*}
        &D_{\psi}(y; k, d, N)\\
        &=\frac{1}{\varphi(kd)}\sum_{\substack{\chi\:(\text{mod } kd)}}\overline{\chi}(N)\psi(y;\chi) - \frac{1}{\varphi(d)\varphi(k)}\sum_{\substack{\chi_1\:(\text{mod } k)}}\overline{\chi_1}(N)\psi(y;\chi_1) \\
        &=\frac{1}{\varphi(kd)}\left[\sum_{\substack{\chi_1\:(\text{mod } k) \\ \chi_2\: (\text{mod }d)}}\overline{\chi_1}(N)\overline{\chi_2}(N)\psi(y;\chi_1\chi_2) - \sum_{\substack{\chi_1\:(\text{mod } k)}}\overline{\chi_1}(N)\overline{\chi_{0,d}}(N)\psi(y;\chi_1)\right] \\
        &=\frac{1}{\varphi(kd)}
        \left[\sum_{\substack{\chi_1\:(\text{mod } k) \\ \chi_2\: (\text{mod }d)}}\overline{\chi_1}(N)\overline{\chi_2}(N)\psi(y;\chi_1\chi_2) - \sum_{\substack{\chi_1\:(\text{mod } k)}}\overline{\chi_1}(N)\overline{\chi_{0,d}}(N)\psi(y;\chi_1 \chi_{0,d})\right] \\
        &\qquad\qquad\qquad\qquad-\frac{1}{\varphi(kd)}\sum_{\substack{\chi_1\:(\text{mod } k)}}\overline{\chi_1}(N)\overline{\chi_{0,d}}(N)\left(\psi(y;\chi_1)-\psi(y;\chi_1 \chi_{0,d})\right) \\
        &=\frac{1}{\varphi(kd)}
        \sum_{\substack{\chi_1\:(\text{mod } k) \\ \chi_2\ne \chi_{0,d}\: (\text{mod }d)}}\overline{\chi_1}(N)\overline{\chi_2}(N)\psi(y;\chi_1\chi_2) \\
        &\qquad\qquad\qquad\qquad-\frac{1}{\varphi(kd)}\sum_{\substack{\chi_1\:(\text{mod } k)}}\overline{\chi_1}(N)\overline{\chi_{0,d}}(N)\left(\psi(y;\chi_1)-\psi(y;\chi_1 \chi_{0,d})\right),
    \end{align*}
    where $\chi_{0,d}$ is the principal character modulo $d$.

We use above that $\chi_{0,d}(l)=1$ for $(l,d)=1$, and that for $(k,d)=1$ the character modulo $kd$ is represented in a unique way as the product of two characters modulo $k$ and modulo $d$.

Summing over $d$ and noting that $\mu^2(kd)=\mu^2(d)$, the above last term is bounded by 
\begin{align}
    \sum_{\substack{d\le H/k\\(d,N)=(d,k)=1}}\frac{\mu^2(d)}{\varphi(kd)}\left|\sum_{\substack{\chi_1\:(\text{mod } k)}}\overline{\chi_1}(N)\overline{\chi_{0,d}}(N)\left(\psi(y;\chi_1)-\psi(y;\chi_1 \chi_{0,d})\right)\right| \le 0.4\log^3 y \label{eq: 534error}
\end{align}
similar to inequality \eqref{eq: psi(y)-psi(y,chi_0)} from the proof of Lemma \ref{small313}.
Then, for the case $kd\le Q_1(x_1)$, we get analogously to \eqref{eq: sum_psi(y,chi)}
\begin{align}
    \sum_{\substack{dk \le Q_1(x_1) \\ (d,N)=(d, k)=1}} \frac{\mu^2(d)}{\varphi(kd)}
    \Bigg| \sum_{\substack{\chi_1\:(\text{mod } k) \\ \chi_2\ne \chi_{0,d}\: (\text{mod }d)}} & \overline{\chi_1}(N)\overline{\chi_2}(N)\psi(y;\chi_1\chi_2) \Bigg|\label{doublesum534}\\ 
    &\le 1.1\log Q_1(y) \Bigg(\frac{3.2\cdot 10^{-8}y}{\log^{8}y} + \frac{y^{\beta_0^*(x_1)}}{\beta_0^*(x_1)} \Bigg). \nonumber
\end{align}

Here we note that since $\chi_2\neq\chi_{0,d}$ and $k\mid k_1$, the exceptional character never appears in the inner sum of \eqref{doublesum534}. Thus, using Theorem \ref{theo:kadiri} we can bound each Siegel zero modulo $d$ by $\beta_0^*(x_1)$ (Equation \eqref{betabound2}).

The case $Q_1(x_1)/k < d \le H/k$ is also dealt with analogously to the inequalities \eqref{eq: case Q_1 < d < H, part 1} and \eqref{eq: case Q_1 < d < H, part 2} from the proof of Lemma \ref{small313}. Then finally, the conversion from $D_\psi(y;k,d,N)$ to $D_\pi(N;k,d,N)$ is done using the same reasoning as in Lemma \ref{small533}. 
\end{proof}

\begin{lemma}\label{bflem2}
    Keep the notation and conditions of Lemma \ref{lemma:bf} except assuming $k_2\ge K_{\delta}(x_2)$. We then have
    \begin{equation*}
        \sum_{\substack{d<D^* \\ d\mid P(y)\\k_2\nmid d}} \max_{(a,d)=1}\left| \sum_{n<X}\sum_{\substack{Z\le p <Y\\ np\equiv N\: (\text{mod }d)}}a(n)-\frac{1}{\varphi(d)}\sum_{n<X} \sum_{\substack{Z\le p <Y\\ (np,d)=1}}a(n)\right|\le \frac{m^*(X_3)XY}{\log^3 Y}
    \end{equation*}
    where $m^*(X_2)=m(X_2)$ as in Lemma \ref{lemma:bf} with the $*$ indicates that $\beta_0(x_2)$ is replaced with
    \begin{equation}\label{betabound3}
        \beta_0^*(x_2)= 1-\frac{1}{2R_1\log (Q_1(x_2))}
    \end{equation}
    which is sharper than \eqref{siegelmin2}.
\end{lemma}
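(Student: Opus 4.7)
The proof is essentially identical to that of Lemma~\ref{lemma:bf}, so I would only highlight the single place where the argument differs. Following \cite[\S 10.7]{Nathanson}, one writes each character $\chi \pmod d$ as $\chi = \chi_{0,s}\chi_1$ with $d = sr$ and $\chi_1$ primitive, and rewrites the left-hand side as the sum
\[
\sum_{\substack{rs < D^* \\ rs \mid P(y)\\ k_1 \nmid rs}} \frac{1}{\varphi(rs)} \sideset{}{^{*}}\sum_{\substack{\chi \ \mathrm{(mod\ }r\mathrm{)}\\ \chi \neq \chi_{0,r}}} \left|\sum_{\substack{n < X\\ (n,s) = 1}} a(n)\chi(n)\right| \left|\sum_{\substack{Z \le p < Y\\ p \nmid s}} \chi(p)\right|,
\]
and splits this into the ranges $r < D_0 = \log^{10} Y$ and $D_0 \le r \le D^*$.

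For the range $r < D_0$, one applies Lemma~\ref{cor:chisum} exactly as in the proof of Lemma~\ref{lemma:bf} to bound the sum over primitive characters by terms involving $v(X_3)$. The key observation is that since $k_1 \nmid rs$ (in particular $k_1 \nmid r$), the exceptional modulus $k_0 = k_1$ does not divide $r$. Therefore any potential Siegel zero $\beta_0$ modulo $r$ is not the exceptional zero, and by Theorem~\ref{theo:kadiri} satisfies the weaker bound
\[
\beta_0 \le 1 - \frac{1}{2R_1 \log Q_1(x_2)}
\]
instead of the bound \eqref{siegelmin2} used in Lemma~\ref{lemma:bf}. This is the only place in the proof where the distinction between the small and large exceptional zero cases enters; it feeds into the definition of $v_1(X_3)$ (and hence $v(X_3)$) used in $m^*(X_3)$.

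For the range $D_0 \le r \le D^*$, the argument proceeds exactly as in the proof of Lemma~\ref{lemma:bf}: one splits into dyadic intervals $D_k \le r < 2D_k$ with $D_k = 2^k D_0$, applies the Cauchy--Schwarz inequality together with the large sieve inequality from \cite[p.~160]{Davenport}, and sums over $k$ using Lemma~\ref{muphilem}. Since the large sieve bound does not depend on any Siegel zero estimate, this range of $r$ contributes identically to before, with no modification required. Combining the two ranges yields the desired bound with $m^*(X_3)$ in place of $m(X_3)$. The only obstacle is really bookkeeping: ensuring that the condition $k_1 \nmid d$ propagates correctly through the decomposition $d = rs$ so that $k_0 \nmid r$, which is immediate since $k_1 \mid rs$ would force $k_1 \mid d$ given that $k_1$ is square-free.
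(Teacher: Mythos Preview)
Your proposal is correct and follows essentially the same approach as the paper's own proof, which is a one-line remark that the argument is identical to Lemma~\ref{lemma:bf} with the condition $k_1\nmid d$ ensuring that $d$ is not exceptional and hence \eqref{betabound3} holds. Your write-up is more detailed and makes explicit the propagation of the condition through the decomposition $d=rs$ (noting that $r\mid d$ forces $k_1\nmid r$); the closing remark about $k_1$ being square-free is superfluous for this implication but harmless.
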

\begin{proof}
    As with Lemma \ref{large533}, the proof is the same as the case $k_2<K_{\delta}(x_2)$, with the added condition $k_2\nmid d$ meaning that $d$ is not exceptional, giving \eqref{betabound3}. 
\end{proof}
 
\section{Preliminaries to sieving}
\label{sec:ps}
We now set up the main sieving argument that will be used to prove an explicit version of Chen's theorem. Some of the definitions that will be presented in this section were previously introduced. We decided to include them here to ease readability and make this section somewhat self-contained.\par
Fix 
\begin{equation}\label{Nyzeq}
N\ge X_2,\quad  z=N^{\frac{1}{8}},\quad  y=N^{\frac{1}{3}}.
\end{equation}
We shall consider the sets
\begin{equation}\label{Aapeq}
A:=\left\{ N-p:p\le N, p\nmid N \right\},\quad A_p:=\left\{ a \in A:p|a \right\} \quad 
A_d:=\bigcap_{p|d} A_p.
\end{equation}
with $p$ prime and $d$ square-free. Note that
\begin{equation}\label{AAqeq}
    \left|A\right|=\pi(N)-\omega(N)\quad \text{and}\quad\left|A_d\right|=\pi(N;d,N)-\omega (N;d,N)
\end{equation} 
where $\omega (n;q,a)$ denotes the number of prime factors of $n$ which are congruent to $a$ modulo $q$.
We also set
\begin{equation} \label{eq: def-S-A-n}
S(A,n):=\left| A-\bigcup_{p|n} A_p\right|
\end{equation}
and
\begin{equation}\label{Bset}
B:=\left\{ N-p_1p_2p_3:z \le p_1<y \le p_2 \le p_3, p_1p_2p_3 <N, (p_1p_2p_3,N)=1   \right\}
\end{equation}
where $p_1$, $p_2$ and $p_3$ are primes. Then, defining 
\begin{equation} \label{eq: def-P(x)}
    P(x):=\prod_{\substack{p<x\\p\nmid N}} p,
\end{equation}
one obtains the following bound.
\begin{lemma}[{\cite[Theorem 10.2]{Nathanson}}]
\label{lem:>}
Let $\pi_2(N)$ denote the number of representations of a given even integer $N$ as the sum of a prime and a semi-prime. We have\footnote{For ease of argument, we have added the condition $q\nmid N$ to the second term which was not present in \cite{Nathanson}. This condition is vacuous since if $q\mid N-p\in A$ and $q\mid N$ then $q=p$. However, this is not possible since $p\nmid N$.}
\begin{equation*}
\pi_2(N)>S(A,P(z))-\frac{1}{2}\sum_{\substack{z \le q <y\\q\nmid N}}S(A_q,P(z))-\frac{1}{2}S(B,P(y))-2N^{\frac{7}{8}}-N^{\frac{1}{3}}.
\end{equation*}
\end{lemma}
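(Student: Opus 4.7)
The plan is to read $S(A,P(z))$ as the count of primes $p \le N$ with $p \nmid N$ such that every prime factor of $N-p$ is at least $z = N^{1/8}$; since $N-p < N = z^8$, such an $N-p$ has at most seven prime factors counted with multiplicity. Call such a prime \emph{good} if $N-p$ has at most two prime factors, so that $p$ contributes to $\pi_2(N)$, and \emph{bad} otherwise. Then $\pi_2(N) \ge S(A,P(z)) - (\text{bad count})$, and the task reduces to upper bounding the bad primes.

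First I would split off the bad primes $p$ for which $N-p$ is not squarefree. If $q^2 \mid N-p$ for some prime $q \ge z$, then $p \equiv N \pmod{q^2}$, giving at most $N/q^2 + 1$ such primes; summing over primes $q$ with $z \le q \le \sqrt{N}$ yields at most $N/(z-1) + \pi(\sqrt{N}) \le 2N^{7/8}$, accounting for the $-2N^{7/8}$ term. For the remaining bad primes, write $N-p = q_1 q_2 \cdots q_s$ with distinct primes $z \le q_1 < q_2 < \cdots < q_s$ and $s \ge 3$. If $q_1 \ge y = N^{1/3}$, then $N-p \ge y^3 = N$, a contradiction; hence $q_1 < y$.

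I would then split these squarefree bad primes by the number of $q_i$ lying in $[z,y)$. If at least two $q_i$ lie in $[z,y)$, then $p$ is counted at least twice in $\sum_{z \le q < y,\, q \nmid N} S(A_q, P(z))$, so the contribution of this subcase is at most $\frac{1}{2}\sum_{z \le q < y,\, q \nmid N} S(A_q, P(z))$. If exactly one $q_i$ (necessarily $q_1$) lies in $[z,y)$, then $q_2 \cdots q_s \ge y^{s-1}$, which combined with $q_1 q_2 \cdots q_s < N = y^3$ forces $s = 3$. Thus $N-p = q_1 r_1 r_2$ with $z \le q_1 < y \le r_1 < r_2$, and since $p$ is a prime not dividing $N$ the coprimality $(q_1 r_1 r_2, N) = 1$ holds automatically, so $(q_1, r_1, r_2)$ is an admissible triple indexing $B$. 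If moreover $p \ge y$, then $(p, P(y)) = 1$ and the corresponding element of $B$ is counted in $S(B, P(y))$, bounding this sub-subcase by $S(B, P(y))$; if instead $p < y$, then $p$ lies among the $\pi(y) < N^{1/3}$ primes less than $y$. Summing these contributions yields the stated inequality.

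The main delicacy is the bookkeeping: each bad prime must fall in exactly one subcase, and the count in $\sum_q S(A_q, P(z))$ is by \emph{distinct} prime factors of $N-p$ in $[z,y)$, which is precisely why isolating the non-squarefree primes into the $2N^{7/8}$ error is essential for the two-to-one counting argument to close.
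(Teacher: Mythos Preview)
Your case split proves a weaker inequality than the one stated: summing your bounds gives
\[
\pi_2(N) \ge S(A,P(z)) - \tfrac{1}{2}\sum_{\substack{z\le q<y\\ q\nmid N}} S(A_q,P(z)) - S(B,P(y)) - 2N^{7/8} - N^{1/3},
\]
with a full $S(B,P(y))$ rather than $\tfrac{1}{2}S(B,P(y))$. This factor of $\tfrac{1}{2}$ is not cosmetic; it is exactly what makes the main terms balance positively in Chen's theorem.

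The gap is in the accounting for the ``exactly one $q_i$ in $[z,y)$'' subcase. You bound it by $S(B,P(y))$ alone, but each such prime $p$ is \emph{also} counted once in $\sum_q S(A_q,P(z))$ (namely at $q=q_1$). The correct argument, which is Nathanson's weighted version of Chen's identity, exploits both counts simultaneously. Writing $a$, $b$, $c$ for the number of squarefree bad primes with, respectively, at least two factors in $[z,y)$; exactly one factor in $[z,y)$ and $p\ge y$; exactly one factor in $[z,y)$ and $p<y$, one has
\[
2a + b + c \le \sum_{\substack{z\le q<y\\ q\nmid N}} S(A_q,P(z)), \qquad b \le S(B,P(y)), \qquad c \le N^{1/3}.
\]
Halving the first inequality and adding half the second plus the third gives $a+b+c \le \tfrac{1}{2}\sum_q S(A_q,P(z)) + \tfrac{1}{2}S(B,P(y)) + N^{1/3}$, which is what is needed. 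Equivalently, assign each $p$ counted in $S(A,P(z))$ the weight $1 - \tfrac{1}{2}\#\{q\in[z,y):q\mid N-p\} - \tfrac{1}{2}\mathbf{1}[p\in B,\ (p,P(y))=1]$ and check it is $\le 0$ for bad $p$ up to the stated error terms.
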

To prove Theorem \ref{Theo:B.} it thus suffices to give a good lower bound for $S(A,P(z))$ and good upper bounds for $S(B,P(y))$ and $S(A_q,P(z))$ for each prime $q$ with $z \le q <y$.\par
We now let $x_i$, $K_{\delta}$, $k_i$ and $Q_1$ be as in Section \ref{notationsect}.
For fixed $i\in\{1,2\}$, if $k_i\ge K_{\delta}(x_i)$ we let $q_1> \ldots> q_\ell$ be all prime factors of $k_i$ and set $m_0 = 1$, $P^0(x) := P(x)$ and for $1 \le j \le \ell$,
\begin{equation} \label{eq: def-mj-Aj-Pj(x)}
    m_j:=q_1\cdots q_j,\quad A^{(j)}:=A_{m_j},\quad 
    P^{(j)}(x):=\prod_{\substack{p<x,~ p\nmid N,\\  p\neq q_1, \ldots, q_j}}p.
\end{equation}
In relation to the function $V(z)$ from the explicit linear sieve (see \eqref{vzdef}), we then define 
\begin{equation} \label{eq: def-V(x)-Vj(x)}
V(x):=\prod_{p|P(x)} \left(1-\frac{1}{p-1}\right),\quad
V^{(j)}(x):=\prod_{p|P^{(j)}(x)} \left(1-\frac{1}{p-1}\right),
\end{equation}
for $1 \le j \le \ell$ and set $V^0(x) = V(x)$. To bound $V^{(j)}(x)$ we use the following result.
\begin{lemma}\label{vjlem}
    For $x\ge 285$ and $j=0,\ldots,\ell$, we have
    \begin{align} \label{eq: Vj-UNj}
        V^{(j)}(x) &= \frac{U_N^{(j)}}{\log x} \left[ 1 + 1.45~\theta_1(x) \frac{\log N}{x-1} \left( 1 + \frac{10 \log \log N}{\log N} \right) \right]\notag\\
        &\qquad\qquad\qquad\qquad\qquad\qquad\cdot\left( 1 + \frac{1.002~\theta_2(x)}{x-3} \right) \left( 1 + \frac{\theta_3(x)}{2 \log^2 x} \right),
    \end{align}
    where $|\theta_i|\le 1$, $i = 1, 2, 3$, and\footnote{In the analogous (non-explicit) theorem by Nathanson \cite[Theorem 10.3]{Nathanson}, there is a small typo. Namely, a factor of $2$ is missing from $U_N$ ($\mathfrak{G}(N)$ in Nathanson's notation).  }
    \begin{equation} \label{eq: def-UNj}
        U^{(j)}_N:=2e^{-\gamma}\prod_{p>2} \left(1-\frac{1}{(p-1)^2} \right) \prod_{\substack{p>2\\ p\mid Nm_j}}\frac{p-1}{p-2},
    \end{equation}
    so that $U^{(0)}_N=U_N$ as defined by \eqref{UNeq}. In particular, when $x=z=N^{1/8}\ge\exp(20)$,
    \begin{equation}\label{vjzbound}
        \frac{U_N^{(j)}}{\log z}\left(1-\frac{32.02}{\log^2 N}\right)<V^{(j)}(z)<\frac{U_N^{(j)}}{\log z}\left(1+\frac{32.02}{\log^2 N}\right)
    \end{equation}
    and when $x=y=N^{1/3}\ge\exp(20)$, we have
    \begin{equation}\label{vjybound}
        \frac{U_N^{(j)}}{\log y}\left(1-\frac{4.51}{\log^2 N}\right)<V^{(j)}(y)<\frac{U_N^{(j)}}{\log y}\left(1+\frac{4.51}{\log^2 N}\right).
    \end{equation}
\end{lemma}
\begin{proof}
We follow the argument in the proof of \cite[Theorem 10.3]{Nathanson}. Let
\begin{equation*}
    W(x) = \prod_{2 < p < x} \left( 1 - \frac{1}{p-1} \right), 
\end{equation*}
then
\begin{align}
    \frac{V^{(j)}(x)}{W(x)} &= \prod_{\substack{2 < p < x \\ p | N m_j}} \left( 1 - \frac{1}{p-1} \right)^{-1} \nonumber \\
    &= \prod_{\substack{p > 2 \\ p | N m_j}} \left( 1 - \frac{1}{p-1} \right)^{-1} \prod_{\substack{p \ge x \\ p | N m_j}} \left( 1 - \frac{1}{p-1} \right) \nonumber\\
    &= \prod_{\substack{p > 2 \\ p | N m_j}} \frac{p-1}{p-2} \prod_{\substack{p \ge x \\ p | N m_j}} \left( 1 - \frac{1}{p-1} \right). \label{eq: Nathanson-Vjz-1}
\end{align}
Let us estimate the second product in \eqref{eq: Nathanson-Vjz-1}. To do so, we note that 
\begin{align}
    p - 1 &\ge x - 1 \ge 284,\notag\\
    1 - t &> \exp(-1.002 t),\qquad\text{for $0<t\leq 1/284$},\label{exp1002eq}\\
    1 - t &\le \exp(-t),\qquad\text{for all $t\in\mathbb{R}$.}\notag
\end{align}
Hence,
\begin{align*}
    \prod_{\substack{p \ge x \\ p | N m_j}} \left( 1 - \frac{1}{p-1} \right) &> \exp \left( -1.002 \sum_{\substack{p \ge x \\ p | N m_j}} \frac{1}{p-1} \right) \\
    &\ge \exp \left( -1.002~\frac{\omega(N m_j)}{x-1} \right).
\end{align*}
By the definition \eqref{eq: def-mj-Aj-Pj(x)} of $m_j$ and conditions \eqref{notcon1} and \eqref{notcon2} on $k_i$, we have ${m_j \le k_i \le \log^{10} N}$. Thus by Lemma \ref{omegalem}, we have $$\omega(N m_j) \le \frac{\log N + 10 \log \log N}{\log 2},$$ and we can continue the chain of inequalities as follows:
\begin{align*}
    \exp \left( -1.002~\frac{\omega(N m_j)}{x-1} \right) &\ge \exp \left( -1.002~\frac{\log N + 10 \log \log N}{(\log 2)(x - 1)} \right) \\
    &\ge 1 - 1.45 \frac{\log N + 10 \log \log N}{x - 1}.
\end{align*}
To summarise,
\begin{equation} \label{eq: Vj-over-W}
\frac{V^{(j)}(x)}{W(x)} = \left[ 1 + 1.45~\theta_1(x) \frac{\log N}{x-1} \left( 1 + \frac{10 \log \log N}{\log N} \right) \right] \prod_{\substack{p > 2 \\ p | N m_j}} \frac{p-1}{p-2},
\end{equation}
where $|\theta_1(x)| \le 1$.

Now,
\begin{align*}
    W(x) \prod_{p < x} \left( 1 - \frac{1}{p}\right)^{-1} &= 2 \prod_{2 < p < x} \left( 1 - \frac{1}{(p-1)^2} \right)\\
    &= 2 \prod_{p > 2} \left( 1 - \frac{1}{(p-1)^2} \right) \prod_{p \ge x} \left( 1 + \frac{1}{p(p-2)} \right) \\
    &\le 2 \prod_{p > 2} \left( 1 - \frac{1}{(p-1)^2} \right) \exp \left( \sum_{p \ge x} \frac{1}{p(p-2)} \right),
\end{align*}
where we have used that $1+x\leq \exp(x)$ for all $x\in\mathbb{R}$. Next we note that
\begin{align*}
    0 \le \sum_{p \ge x} \frac{1}{p(p-2)} \le \sum_{n \ge x-2} \frac{1}{n^2} \le \frac{1}{(x-3)},
\end{align*}
whence, by \eqref{exp1002eq},
\begin{align*}
    W(x) \prod_{p < x} \left( 1 - \frac{1}{p}\right)^{-1} = 2 \prod_{p > 2} \left( 1 - \frac{1}{(p-1)^2} \right) \left( 1 + \frac{1.002~\theta_2(x)}{x-3} \right),
\end{align*}
with $|\theta_2(x)| \le 1$. Using an explicit form of Mertens' third theorem \cite[Theorem 7]{R-S} then yields
\begin{equation}\label{finalWeq}
    W(x) = \frac{2e^{-\gamma}}{\log x} \prod_{p > 2} \left( 1 - \frac{1}{(p-1)^2} \right) \left( 1 + \frac{\theta_3(x)}{2 \log^2 x} \right) \left( 1 + \frac{1.002~\theta_2(x)}{x-3} \right),
\end{equation}
for all $x\geq 285$, with $|\theta_3(x)| \leq 1$. The expressions \eqref{finalWeq} and \eqref{eq: Vj-over-W} imply \eqref{eq: Vj-UNj}.
\end{proof}
We also have the following bounds relating to $\ell$, $q_j$ and $U_N^{(j)}$.
\begin{lemma}\label{elllem}
If $\ell$ primes divide $k_i$ then
\begin{equation}
\label{eq:l}
\ell \le \frac{1.3841 \log (\log^{10} x_i)}{\log \log (\log^{10} x_i)}.
\end{equation}
\end{lemma}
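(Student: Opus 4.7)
The plan is to reduce this directly to the explicit bound on $\omega(n)$ given in Lemma \ref{omegalem}, after invoking the size constraint on $k_1$ built into the definitions of \S \ref{notationsect}.

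First I would unpack the definition: $k_1$ is either $0$, in which case $\ell = 0$ and there is nothing to prove, or $k_1 = k_0$, where $k_0$ is the exceptional modulus up to $Q_1(x_2)$. In the latter case $k_1 \le Q_1(x_2) = \log^{10} x_2$. Since $\ell$ is precisely $\omega(k_1)$, the task reduces to bounding $\omega(k_1)$ in terms of $\log^{10} x_2$.

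Next, assuming $k_1 \ge 3$ so that Lemma \ref{omegalem} applies, we obtain
\begin{equation*}
\ell = \omega(k_1) < \frac{1.3841 \log k_1}{\log \log k_1}.
\end{equation*}
Then I would invoke the monotonicity of the function $t \mapsto \log t/\log \log t$, which is increasing for $t > e^{e}$. Since under the standing hypothesis $\log\log x_2 \ge 10.4$ the quantity $\log^{10} x_2$ is enormously larger than $e^e$, combining with $k_1 \le \log^{10} x_2$ yields the required inequality
\begin{equation*}
\ell < \frac{1.3841 \log(\log^{10} x_2)}{\log \log(\log^{10} x_2)}.
\end{equation*}

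The only edge cases are the small values $k_1 \in \{1,2\}$ (though $k_1$ is in fact odd by the remarks in \S \ref{notationsect}, so $k_1 = 2$ never actually occurs). For these, $\omega(k_1) \le 1$, which is trivially dominated by the right-hand side for all $x_2$ in our range. There is no real obstacle here; the statement is essentially a direct corollary of Lemma \ref{omegalem} together with the defining bound $k_1 \le Q_1(x_2)$, and the only thing to check carefully is monotonicity of $\log t/\log\log t$ in the relevant range.
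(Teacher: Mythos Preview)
Your proposal is correct and follows exactly the same route as the paper's proof, which simply says the result follows from the bound $\omega(n)<1.3841\log n/\log\log n$ (Lemma~\ref{omegalem}) together with $k_1\le Q_1(x_2)=\log^{10}x_2$. You have in fact been more careful than the paper by explicitly noting the monotonicity of $t\mapsto\log t/\log\log t$ and dispatching the trivial small-$k_1$ cases.
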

\begin{proof}
The result follows from \eqref{eq:w(n)} using that $k_i \le Q_1(x_i)=\log^{10} x_i$.
\end{proof}
\begin{lemma}\label{leq1lem}
    If $\ell\ge 2$,
    \begin{equation*} 
        \frac{1}{q_2-2}+\frac{1}{(q_2-2)(q_3-2)}+\cdots+\frac{1}{(q_2-2)(q_3-2)\cdots(q_\ell-2)}\le 1.
    \end{equation*}
\end{lemma}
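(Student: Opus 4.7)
The plan is to prove this by induction on $\ell$, exploiting the recursive structure of the sum. For the base case $\ell = 2$, the sum reduces to $\frac{1}{q_2 - 2}$; since $k_1$ is a square-free odd number, $q_2$ is an odd prime with $q_2 \ge 3$, so $q_2 - 2 \ge 1$ and the bound holds.

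For the inductive step with $\ell \ge 3$, I would factor out the leading $\frac{1}{q_2-2}$ to rewrite the sum as
\begin{equation*}
S \;=\; \frac{1}{q_2-2}\left(1 + \sum_{j=3}^{\ell}\prod_{i=3}^{j}\frac{1}{q_i-2}\right).
\end{equation*}
The inner sum has exactly the same form as the original claim, but now indexed over the $(\ell-1)$-term decreasing sequence of distinct odd primes $q_3 > q_4 > \cdots > q_\ell \ge 3$. By the inductive hypothesis (applied to this shorter sequence), this inner sum is at most $1$, and hence $S \le \frac{2}{q_2-2}$.

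To close the induction I only need $\frac{2}{q_2-2}\le 1$, i.e.\ $q_2\ge 4$. Since $\ell\ge 3$, $q_2$ is an odd prime strictly larger than $q_3\ge 3$, so $q_2\ge 5$ and $q_2-2\ge 3$; in fact this yields the stronger bound $S\le 2/3$. There is no real obstacle: the single structural fact driving the argument is that distinct odd primes are spaced by at least $2$, which forces $q_2\ge 5$ as soon as $q_2$ is not the smallest prime factor of $k_1$. The base case $\ell=2$ is where the inequality is tight (attained when $q_2=3$), while for $\ell\ge 3$ one has considerable slack.
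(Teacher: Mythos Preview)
Your proof is correct, but it differs from the paper's argument. The paper does not use induction: instead, it observes that each of the $\ell-1$ terms in the sum is at most $\frac{1}{q_2-2}$ (since every additional factor $q_i-2\ge 1$), and then uses the fact that $q_2>q_3>\cdots>q_\ell\ge 3$ are distinct odd integers to deduce $q_2\ge 2\ell-1$, giving the bound $\frac{\ell-1}{2\ell-3}\le 1$.

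Your inductive approach is a genuine alternative. It exploits the recursive structure of the sum rather than the growth of $q_2$ with $\ell$: you only ever need $q_2\ge 5$ (for $\ell\ge 3$), not the sharper $q_2\ge 2\ell-1$. This makes your argument marginally more robust---it would still work for any decreasing sequence of odd primes, regardless of how many there are below $q_2$---and it immediately shows $S\le 2/3$ for all $\ell\ge 3$. The paper's approach, on the other hand, is a single direct estimate with no induction to set up, and its bound $\frac{\ell-1}{2\ell-3}$ tends to $1/2$ as $\ell\to\infty$. Both are short and elementary; neither has a real advantage for the application at hand.
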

\begin{proof}
    Since $N$ is even, and $(k_i, N) = 1$, $k_i$ and all its prime factors are odd. Thus $q_2>q_3>\cdots>q_{\ell}$ is a decreasing set of odd numbers, so we have $q_2\ge 2\ell-1$. Thus,
    \begin{align*}
        \frac{1}{q_2-2}+\frac{1}{(q_2-2)(q_3-2)}+\cdots+\frac{1}{(q_2-2)(q_3-2)\cdots(q_\ell-2)}&\le(\ell-1)\frac{1}{q_2-2}\\
        &\le\frac{\ell-1}{2\ell-3}\\
        &=\frac{1}{2}+\frac{1}{4\ell-6}\\
        &\le 1
    \end{align*}
\end{proof}
\begin{lemma}\label{e0lem}
We recall that $0 < \delta < 2$ and $K_{\delta}(x) = \log^\delta x$. Suppose $N$ is a positive even integer with $N \ge X_2$, and $k_i\ge K_{\delta}(x_i(X_2)) \ge 3$ and 
\begin{equation} \label{eq: def-epsilon0-X2-delta}
    \varepsilon_i(X_2,\delta):=\frac{1}{\overline{p}-2},    
\end{equation}
with $\overline{p}$ the largest prime such that
    \begin{equation*}
        \log^{\delta}x_i(X_2)\ge \prod_{2<p\le\overline{p}}p.
    \end{equation*}
    We then have 
    \begin{equation*}
        U_N^{(1)}\le U_N(1+\varepsilon_i(X_2,\delta))
    \end{equation*}
    where $U_N^{(1)}$ is defined in \eqref{eq: def-UNj}.
\end{lemma}
\begin{proof}
    Since $k_i\ge\log^\delta x_i(X_2)$,
    \begin{equation*}
        k_i\ge\prod_{2<p\le\overline p}p.
    \end{equation*}
    Then, since $k_i$ is odd and square-free, this means that $q_1\ge\overline{p}$. Thus
    \begin{equation*}
        U_N^{(1)} =  U_N\frac{q_1-1}{q_1-2}\le U_N(1+\varepsilon_i(X_2,\delta)),
    \end{equation*}
    as required.
\end{proof}
In relation to the remainder term \eqref{Rdef} appearing in the linear sieve, we now define
\begin{equation} \label{eq: def-r(d)-rk(d)}
r(d):=\left|A_d\right|-\frac{\left|A\right|}{\varphi(d)}\quad \text{and}\quad
r_k(d):=\left|A_{kd}\right|-\frac{\left|A_k\right|}{\varphi(d)},
\end{equation}
with $A$ and $A_d$ defined in \eqref{Aapeq}. By \eqref{AAqeq}, $r(d)$ and $r_k(d)$ can be expressed as
\begin{align}
    r(d)&=\pi(N;d,N)-\omega(N;d,N)-\frac{\pi(N)-\omega(N)}{\varphi(d)}\label{rddef}\\
    r_k(d)&=\pi(N;kd,N)-\omega(N;kd,N)-\frac{\pi(N;k,N)-\omega(N;k,N)}{\varphi(d)}\label{rkddef}.
\end{align}
This leads us to the following estimates.

\begin{lemma}
\label{lemma:r}
Let $N$ be a positive even integer with $N\ge X_2$ and $\log\log(x_1(X_2))\ge 10.4$. We have
\begin{equation}
\label{eq:A>}
    \frac{N}{\log N}<\left|A\right|<1.00005\frac{N}{\log N}.
\end{equation}
If $k_1\ge K_{\delta}(x_1)$ then for $j=0,1,\ldots,l-1$ we have
\begin{equation}
\label{eq:rj}
\left|r(m_j)\right|<\frac{c_2(X_2)N}{\log^3N},
\end{equation}
with 
\begin{equation} \label{eq: def-c-2}
c_2(X_2):=c(X_2)+\frac{1.3841\log^4 X_2}{X_2\log\log X_2},
\end{equation}
where $c(X_2)$ defined in Lemma~\ref{lem: general-5.3.2}.
We also have
\begin{equation}
\label{eq:rl}
\left|r(m_l)\right|\le\frac{c_3(X_2) N \log \log \log N}{\log^{1+\delta}N},
\end{equation}
with
\begin{align}
    c_3(X_2)&:=\max_{N\ge X_2}\Bigg[\frac{1}{\log\log\log N}\cdot\left(\frac{3}{2\log N}+\frac{\log(N\log^{10}x_1)}{\log(N/\log^{10}x_1)}\right)\frac{\log^\delta N}{\log^\delta x_1}\label{eq: def-c-3}\\
    &\qquad\qquad\cdot\left(e^\gamma\log\log\log^\delta x_1+\frac{5}{2\log\log\log^{\delta}x_1}\right)\notag\\
    &\qquad\qquad+\frac{1.3841\log^{2+\delta}N}{N\log\log N\log\log\log N}\Bigg].\notag
\end{align}
\end{lemma}
\begin{proof}
By \eqref{AAqeq} and Lemma \ref{omegalem}, we have
$$\pi(N) - \frac{1.3841 \log N}{\log \log N} \le |A| \le \pi(N),$$
where we used that $\frac{\log N}{\log \log N}$ increases for $\log \log N \ge 1$. Hence, by \cite[Theorem 2]{R-S},
\begin{equation}
    \frac{N}{\log N - \frac{1}{2}} - \frac{1.3841 \log N}{\log \log N} < |A| < \frac{N}{\log N - \frac{3}{2}},
\end{equation}
which implies \eqref{eq:A>} for $\log \log x_1(N) \ge 10.4$.\par
Now let us prove \eqref{eq:rj}. By \eqref{rddef}, and Lemmas \ref{omegalem} and \ref{lem: general-5.3.2}, we get
    \begin{align}
        r(m_j)&=\left||A_{m_j}|-\frac{|A|}{\varphi(m_j)}\right| \nonumber\\
        &\le\left|\pi(N;m_j,N)-\frac{\pi(N)}{\varphi(m_j)}\right|+\left|\omega(N;m_j,N)-\frac{\omega(N)}{\varphi(m_j)}\right| \nonumber\\
        &\le|E_{\pi}(N; m_j, N)|+\max\left\{\omega(N;m_j,N), \frac{\omega(N)}{\varphi(m_j)}\right\} \nonumber\\
        &\le |E_{\pi}(N; m_j, N)| + \omega(N) \nonumber\\
        &\le |E_{\pi}(N; m_j, N)| + \frac{1.3841 \log N}{\log \log N} \label{eq: bound-rmj}\\
        &\le \frac{N}{\log^3 N} \left( c(X_2) + \frac{1.3841 \log^4 N}{N \log \log N} \right), \nonumber
    \end{align}
    and we conclude \eqref{eq:rj} since the function $\frac{\log^4 N}{N \log \log N}$ decreases for $N \ge \exp(\exp(10.4))$.
    To prove \eqref{eq:rl}, we use \eqref{eq: bound-rmj} with $j = \ell$ and bound $|E_{\pi}(N; m_{\ell}, N)|$ with a bit more care. First, by an explicit form of the Brun--Titchmarsh theorem \cite[Theorem 2]{M-V} we have
    \begin{equation*}
        \pi(N;m_\ell,N)<\frac{2N}{\varphi(m_\ell)\log(N/m_\ell)}=\frac{N}{\varphi(m_\ell)\log N}+\frac{\log(Nm_\ell)}{\log(N/m_\ell)}\cdot\frac{N}{\varphi(m_\ell)\log N}.
    \end{equation*}
    Thus, noting $\pi(N;m_\ell,N)\ge 0$,
    \begin{equation*}
        \pi(N;m_\ell,N)=\frac{N}{\varphi(m_\ell)\log N}+\varepsilon\frac{\log(Nm_\ell)}{\log(N/m_\ell)}\cdot\frac{N}{\varphi(m_\ell)\log N}
    \end{equation*}
    for some $|\varepsilon|\le 1$. Similarly, by \cite[Theorem 1]{R-S}
    \begin{equation*}
        \frac{\pi(N)}{\varphi(m_{\ell})}=\frac{N}{\varphi(m_\ell)\log N}+\varepsilon' \frac{3N}{2 \varphi(m_{\ell})\log^2 N}
    \end{equation*}
    for some $|\varepsilon'| \le 1$. Therefore,
    \begin{equation*}
        |E_{\pi}(N; m_{\ell}, N)| = \left|\pi(N;m_\ell,N)-\frac{\pi(N)}{\varphi(m_\ell)}\right|\le\frac{N}{\varphi(m_\ell)\log N}\left(\frac{3}{2\log N}+\frac{\log(Nm_\ell)}{\log(N/m_\ell)}\right).
    \end{equation*}
    Finally, since $k_1$ is square-free and by \eqref{eq: def-mj-Aj-Pj(x)}, we get that $m_\ell=k_i$ is odd, hence by \cite[Theorem 15]{R-S}
    \begin{equation*}
        \frac{1}{\varphi(m_\ell)}<\frac{e^\gamma\log\log m_\ell}{m_\ell}+\frac{5}{2m_\ell\log\log m_\ell}.
    \end{equation*}
    Combining all the above estimates with \eqref{eq: bound-rmj} for $j = \ell$ and using $m_\ell=k_i\in[\log^\delta x_1,\log^{10}x_1]$ gives the desired result.
\end{proof}

\begin{lemma}\label{544small}
    Let $N$ be a positive even integer with $N \ge X_2$ and $\log\log x_1(X_2)\ge 10.4$, $H=H(N)=\frac{\sqrt{x_1}}{\log^{10}x_1}$, and suppose $p(X_2)$, $p^*(X_2)$ are as in Lemmas \ref{small533} and \ref{large533}. We have
    \begin{equation}\label{unsharpeq}
        \sum_{\substack{d<H\\d\mid P(z)}}|r(d)|<\frac{c_4(X_2)N}{\log^3 N}
    \end{equation}
    and if $k_1\ge K_{\delta}(x_1)$
    \begin{equation}\label{sharpeq}
         \sideset{}{^\sharp}\sum_{\substack{d<H/m_j}}|r_{m_j}(d)|<\frac{c_4^*(X_2)N}{\log^3N},
    \end{equation}
    for all $1 \le j \le \ell$, where
    \begin{align}
        c_4(X_2)&:=p(X_2)+\frac{0.9\sqrt{x_1(X_2)}\log^{4}X_2}{X_2\log^{10}(x_1(X_2))\log\log X_2},\label{eq: def-c-4}\\
        c_4^*(X_2)&:=p^*(X_2)+\frac{0.9\sqrt{x_1(X_2)}\log^{4}X_2}{X_2\log^{10}(x_1(X_2))\log\log X_2} \label{eq: def-c-4-*}
    \end{align}
    and the $\sharp$ means that the sum is over $d\mid P^{(j+1)}(z)$ if $j<\ell$ and $d\mid P^{(\ell)}(z)$ if $j=\ell$, with $P^{(j)}$ defined in \eqref{eq: def-mj-Aj-Pj(x)}.
\end{lemma}
\begin{proof}
    First, we prove \eqref{unsharpeq}. As in \eqref{eq: bound-rmj},
    \begin{equation*}
        |r(d)|\le|E_{\pi}(N; d, N)| + \frac{1.3841 \log N}{\log \log N}.\label{rdineq}
    \end{equation*}
    Thus by Lemmas \ref{small533} and \ref{squarefreelem},
    \begin{equation*}
        \sum_{\substack{d<H\\d\mid P(z)}}|r(d)|<\frac{p(X_2)N}{\log^3 N}+0.65H\cdot\frac{1.3841\log N}{\log\log N},
    \end{equation*}
    which gives the required result. The proof of \eqref{sharpeq} is essentially the same, using Lemma \ref{large534} in place of Lemma \ref{small533}.
\end{proof}

\section{A lower bound for $S(A,P(z))$}
\label{section:1}
In this section, we obtain a lower bound for $S(A,P(z))$. This is the first term appearing in the bound for $\pi_2(N)$ in Lemma \ref{lem:>}. We recall that functions $f(x)$ and $F(x)$ are defined in \eqref{def-f(s)-F(s)}, $x_1(N)$ defined in \eqref{notcon1}, and $c_2(X_2)$, $c_3(X_2)$ defined in \eqref{eq: def-c-2} and \eqref{eq: def-c-3}. We introduce the following notations: for $\alpha_1 > 0$, set
\begin{align}
&c_{\alpha_1,X_2}:=4-8\alpha_1-\frac{160\log\log X_2}{\log X_2}, \label{eq: def-c-alpha1-X2}\\
&\overline{m}_{\alpha_1,X_2}:=\max\{(1-f(c_{\alpha_1,X_2}),F(c_{\alpha_1,X_2})-1)\},\label{eq: def-m-bar-alpha1-X2}\\
&a(X_2):=a_1(X_2)\max_{N\ge X_2}\left[\frac{\log\log\log N}{\log^{\delta}N}\cdot\prod_{p>2}\frac{(p-1)^2}{p(p-2)}\right.\label{eq: def-a(X_2)}\\
&\qquad\qquad\qquad\qquad\qquad\left.\cdot\left(e^{\gamma}\log\log(\log^{10}x_1(N))+\frac{2.5}{\log\log(\log^{10}x_1(N))}\right)\right], \nonumber\\
&a_1(X_2):=\max_{N\ge X_2}\left[\frac{c_2(X_2)}{\log^{2-\delta}N\log\log\log N}\cdot\frac{1.3841\log(\log^{10}x_1(N))}{\log\log(\log^{10}x_1(N))}\right]+c_3(X_2).\label{eq: def-a_1(X_2)}
\end{align}

\begin{theorem}
\label{theo:S>1}
Let $u_0=10^9$ and $\varepsilon=1.452\cdot 10^{-7}$ be the corresponding values in Lemma~\ref{lem: prod-u>u_0}. Recall that $K_{\delta}(x) = \log^\delta(x)$ for $0 < \delta < 2$.
Let $X_2$ be such that $\log\log x_1(X_2)\ge 10.4$, assume that $\alpha_1>0$, $N\ge X_2$ is an even integer, and $z = N^{1/8}$ such that
\begin{align}
&\frac{N^{\alpha_1}}{\log^{10} x_1(N) \log^{2.5} N}\ge \exp\left(u_0\left(1+\frac{9\cdot 10^{-7}}{\log u_0} \right)\right), \quad \quad \frac{N^{\frac{1}{2}-\alpha_1}}{\log^{20} N}\ge z^2 \nonumber \\
&8\alpha_1+\frac{160\log \log N}{\log N}<1, \quad \quad K_{\delta}(x_1(X_2)) \ge 3022.\label{cond: S-A-P-z}
\end{align} 
Let $A$, $S(A, n)$, and $P(z)$ be defined by \eqref{Aapeq}, \eqref{eq: def-S-A-n}, and \eqref{eq: def-P(x)} respectively. Assume $U_N = U_N^{(0)}$ is defined in \eqref{eq: def-UNj}, $h(x)$ in \eqref{eq-def-h(s)}, $\varepsilon_1(X_2, \delta)$ in \eqref{eq: def-epsilon0-X2-delta}, $c_4(X_2)$ and $c_4^*(X_2)$ in Lemma \ref{544small}, and let $C_1(\varepsilon)=106$ and $C_2(\varepsilon)=107$ be the values from Table \ref{tab:fF}. Let \label{def-C-upper-bar} $\overline{C}(\varepsilon)=\max\{C_1(\varepsilon),C_2(\varepsilon)\}$.
For $k_1$ as defined in \eqref{eq:k11}, we consider two cases.\newline
(a) If $k_1<K_{\delta}(x_1(N))$, we have
\begin{align*}
    S(A,P(z))&>8\frac{|A| U_N}{\log N}\left(1-\frac{32.02}{\log^2 N}\right)\Bigg\{\frac{2e^{-\gamma}\log(3-8\alpha_1)}{4-8\alpha_1}-C_2(\varepsilon)\varepsilon e^2h(4-8\alpha_1)\nonumber\\
    &\qquad\qquad-\frac{1}{8}\left(1-\frac{32.02}{\log^2 N}\right)^{-1}\left(2e^{-\gamma}\prod_{p>2}\left(1-\frac{1}{(p-1)^2}\right)\right)^{-1}\frac{c_4(X_2)}{\log N}\Bigg\}.
\end{align*}
(b) If $k_1\ge K_{\delta}(x_1(N))$, we have
\begin{align*}
    &S(A,P(z))>\\
    &\ 8\frac{|A| U_N}{\log N}\left(1+\frac{32.02}{\log^2N}\right)\Bigg\{\frac{2e^{\gamma}\log(3-8\alpha_1-\frac{160\log\log X_2}{\log X_2})}{4-8\alpha_1-\frac{160\log\log X_2}{\log X_2}}-\varepsilon_1(X_2,\delta)(1-f(c_{\alpha_1,X_2})) \\
    &\ -(1+\varepsilon_1(X_2,\delta))\varepsilon C_2(\varepsilon)e^2h(c_{\alpha_1,X_2}) \\
    &\ -\left(3\varepsilon_1(X_2,\delta)+ a(X_2) \right)\cdot(\overline{m}_{\alpha_1,X_2}+\varepsilon \overline{C}(\varepsilon)e^2h(c_{\alpha_1,X_2}))-a(X_2)-\frac{64.04}{\log^2N}\\
    &\ -\frac{1}{8}\left(1+\frac{32.02}{\log^2 N}\right)^{-1}\left(2e^{-\gamma}\prod_{p>2}\left(1-\frac{1}{(p-1)^2}\right)\right)^{-1}\frac{c_4^*(X_2)}{\log N}\frac{1.3841\log(\log^{10}x_1(N))}{\log\log(\log^{10}x_1(N))}\Bigg\},
\end{align*}
with $c_2(X_2)$ and $c_3(X_2)$ as in Lemma~\ref{lemma:r}.
\end{theorem}
\begin{remark}
    The constant $$\prod_{p>2}\left(1-\frac{1}{(p-1)^2}\right)=\prod_{p>2}\frac{p(p-2)}{(p-1)^2}=0.66016\ldots,$$ is called the {\slshape twin prime constant}.
\end{remark}

\begin{proof}[Proof of Theorem \ref{theo:S>1} in case (a).]
Assume $k_1<K_{\delta}(x_1)$. In this case, we set
\begin{equation} \label{eq: TheoA-def-D-s-Q}
    D^{(1)}:=N^{\frac{1}{2}-\alpha_1},\ s^{(1)}:=\frac{\log D^{(1)}}{\log z}=4-8\alpha_1\quad\text{and}\quad Q(u):=\prod_{\substack{p< u\\p\nmid N}}p.
\end{equation}
We note that $3 \le s^{(1)} \le 4$ since $\alpha_1 > 0$ by definition and $\alpha_1 < \frac{1}{8}$ by \eqref{cond: S-A-P-z}.

We will apply Theorem \ref{theo:JR} to the set $A$ with $\mathbb{P}$ the set of primes coprime to $N$, $g_n(p)=1/(p-1)$, $Q=Q(u_0)$, $D=D^{(1)}$, and $s = s^{(1)}$. Setting parameters in this way implies that $S(A, P(z)) = S(A, \mathbb{P}, z)$ from Theorem \ref{theo:JR}. Then $D\ge z^2$ follows from the condition $\frac{N^{\frac{1}{2}-\alpha_1}}{\log^{20} N}\ge z^2$ assumed in the first line of \eqref{cond: S-A-P-z}, and thus
\begin{align}\label{smallsieveeq-prep}
    S(A,P(z))&>(f(s^{(1)})-\varepsilon C_{2}(\varepsilon) e^2h(s^{(1)}))|A|V(z)- \sum_{\substack{d\mid P(z)\\ d<QD^{(1)}}}|r(d)| \nonumber \\
    &>\frac{8|A|U_N}{\log N}\left(1-\frac{32.02}{\log^2N}\right)(f(s^{(1)})-C_2(\varepsilon)\varepsilon e^2h(s^{(1)}))-\sum_{\substack{d\mid P(z)\\ d<QD^{(1)}}}|r(d)|,
\end{align}
where we used Theorem \ref{theo:JR} in the first line and \eqref{vjzbound} in the second line. By the definition \eqref{eq: approx-f24} of $f(s^{(1)})$ for $2 \le s^{(1)} \le 4$, the second line of \eqref{smallsieveeq-prep} coincides with
\begin{equation} \label{smallsieveeq}
    8\frac{|A| U_N}{\log N}\left(1-\frac{32.02}{\log^2 N}\right)\Bigg(\frac{2e^{\gamma}\log(3-8\alpha_1)}{4-8\alpha_1}-C_2(\varepsilon)\varepsilon e^2h(4-8\alpha_1)\Bigg) -\sum_{\substack{d\mid P(z)\\ d<QD^{(1)}}}|r(d)|.
\end{equation}

We remark that the condition 
\begin{equation*}
    \frac{N^{\alpha_1}}{\log^{10}x_1(N)\log^{2.5}N}\ge\exp\left(u_0\left(1+\frac{9\cdot 10^{-7}}{\log u_0}\right)\right)
\end{equation*}
implies that
\begin{equation}\label{qeq}
    Q\le \frac{N^{\alpha_1}}{\log^{10}x_1(N)\log^{2.5}N}
\end{equation}
by Lemma \ref{thetauplem}. As a result, $QD^{(1)}\le H=\frac{\sqrt{x_1}}{\log^{10}x_1}$ so that we may apply Lemma \ref{544small} (in particular the bound \eqref{unsharpeq}) to the error term in \eqref{smallsieveeq}:
\begin{align}
    \sum_{\substack{d\mid P(z)\\ d<QD^{(1)}}}|r(d)| &< \frac{c_4(X_2) N}{\log^3 N} = \frac{U_N N}{\log^3 N} \cdot U_N^{-1} \cdot c_4(X_2) \nonumber\\
    &< \frac{|A| U_N}{\log^2 N} \left(2e^{\gamma}\prod_{p>2}\left(1-\frac{1}{(p-1)^2}\right)\right)^{-1} c_4(X_2), \label{eq: ThmA-case-a-r(d)}
\end{align}
where in the last line we used the definition \eqref{eq: def-UNj} of $U_N$ and the condition $|A|>N/\log N$, see \eqref{eq:A>}. 

Combining \eqref{smallsieveeq-prep}, \eqref{smallsieveeq}, and \eqref{eq: ThmA-case-a-r(d)} proves the theorem in case (a).
\end{proof}

In case (b), when $k_1\ge K_{\delta}(x_1)$, we need to apply an inclusion-exclusion argument which in essence allows us to avoid the large exceptional zero.

\begin{lemma}\label{exinclem}
Keep the notations from Theorem \ref{theo:S>1}. We have
    \begin{equation*}
        S(A,P(z))=\sum_{j=0}^{\ell-1}(-1)^j S(A^{(j)},P^{(j+1)}(z))+(-1)^{\ell} S(A^{(\ell)},P^{(\ell)}(z)),
    \end{equation*}
    with $A^{(j)}$ and $P^{(j)}(z)$ defined in \eqref{eq: def-mj-Aj-Pj(x)}.
\end{lemma}
\begin{proof}
    First note that $S(A,P^{(1)}(z))-S(A,P(z))$ counts the number of integers in $A$ that are divisible by $q_1$ but not by any other primes below $z$. Then, $S(A^{(1)},P^{(2)}(z))-S(A,P^{(1)}(z))+S(A,P(z))$ counts the number of integers in $A$ that are divisible by $q_1$ and $q_2$ but not any other primes less than $z$. By generalising this argument, we have that $\sum_{j=0}^{\ell-1}(-1)^{\ell-1-j}S(A^{(j)},P^{(j+1)}(z))+(-1)^{\ell}S(A,P(z))$ counts the number of integers in $A$ divisible by $q_1,\ldots,q_{\ell}$ but no other primes less than $z$. That is,
    \begin{equation*}
        S(A^{(\ell)},P^{(\ell)}(z))=\sum_{j=0}^{\ell-1}(-1)^{\ell-1-j}S(A^{(j)},P^{(j+1)}(z))+(-1)^{\ell}S(A,P(z)),
    \end{equation*}
    which rearranges to give the desired result.
\end{proof}

We now bound $S(A^{(j)},P^{(j+1)}(z))$ and $S(A^{(\ell)},P^{(\ell)}(z))$.

\begin{lemma}\label{linearlem}
Keep the notation from the beginning of Section \ref{section:1} and Theorem \ref{theo:S>1}. Let
    \begin{equation*}
        D^{(1)}_j:=\frac{N^{\frac{1}{2}-\alpha_1}}{k_1m_j}\quad\text{and}\quad s^{(1)}_j:=\frac{\log D^{(1)}_j}{\log z}.
    \end{equation*}
    for $j=0,\ldots,\ell$. Let $Q(u)$ be as in \eqref{eq: TheoA-def-D-s-Q}, $r_k(d)$ be as in \eqref{eq: def-r(d)-rk(d)}, and
    \begin{equation*}
        E_j:=
        \begin{cases}
            \sum_{d\mid P^{(j+1)}(z), \:d<D^{(1)}_jQ(u_0)}|r_{m_j}(d)|,&\text{if $j=0,\ldots,\ell-1$},\\
            \sum_{d\mid P^{(\ell)}(z), \:d<D^{(1)}_\ell Q(u_0)}|r_{m_{\ell}}(d)|,&\text{if $j=\ell$}.
        \end{cases}
    \end{equation*}
    Provided that each $D^{(1)}_j\ge z^2$,
    \begin{align*}
        &\left|A^{(j)}\right|\left[V^{(j+1)}(z)-8U_N^{(j+1)}\left(1+\frac{32.02}{\log^2N}\right)\frac{(1-f(s^{(1)}_j))+\varepsilon C_2(\varepsilon)e^2h(s_j^{(1)})}{\log N}\right]-E_j\\
        &\qquad\qquad\qquad\qquad\qquad <S(A^{(j)},P^{(j+1)}(z))\\
        &<\left|A^{(j)}\right|\left[V^{(j+1)}(z)+8U_N^{(j+1)}\left(1+\frac{32.02}{\log^2N}\right)\frac{(F(s^{(1)}_j)-1)+\varepsilon C_1(\varepsilon)e^2h(s_j^{(1)})}{\log N}\right]+E_j
    \end{align*}
    for $j=0,\ldots,\ell-1$, and
    \begin{align*}
        &\left|A^{(\ell)}\right|\left[V^{(\ell)}(z)-8U_N^{(\ell)}\left(1+\frac{32.02}{\log^2N}\right)\frac{(1-f(s_{\ell}^{(1)}))+\varepsilon C_2(\varepsilon)e^2h(s_\ell^{(1)})}{\log N}\right]-E_{\ell}\\
        &\qquad\qquad\qquad\qquad\qquad <S(A^{(\ell)},P^{(\ell)}(z))\\
        &<\left|A^{(\ell)}\right|\left[V^{(\ell)}(z)+8U_N^{(\ell)}\left(1+\frac{32.02}{\log^2N}\right)\frac{(F(s_{\ell}^{(1)})-1)+\varepsilon C_1(\varepsilon)e^2h(s_{\ell}^{(1)})}{\log N}\right]+E_{\ell},
    \end{align*}
    where $A^{(j)}$, $V^{(j)}(z)$, and $U^{(j)}_N(z)$ are defined in \eqref{eq: def-mj-Aj-Pj(x)}, \eqref{eq: def-V(x)-Vj(x)}, and \eqref{eq: def-UNj} respectively.
\end{lemma}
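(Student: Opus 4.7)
The plan is to apply the explicit linear sieve (Theorem \ref{theo:JR}) directly to each sifting problem, once with sifted set $A^{(j)}$ and primes $P^{(j+1)}(z)$ (for $0\le j\le\ell-1$), and once with sifted set $A^{(\ell)}$ and primes $P^{(\ell)}(z)$ (for $j=\ell$). In each case we choose the multiplicative density $g(d)=1/\varphi(d)$, so that $g(p)=1/(p-1)$ for primes, the set $\mathbb{P}$ of those primes $p<z$ with $p\nmid N$ and $p\notin\{q_1,\ldots,q_{j+1}\}$ (respectively $\{q_1,\ldots,q_\ell\}$), and $\mathbb{Q}\subseteq\mathbb{P}$ to be the subcollection of primes $p\le u$, where $u$ is chosen as in Theorem \ref{theo:S>1}. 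The Mertens-type product bound proved at the end of Section \ref{section:LS}, together with the hypothesis on $u$, furnishes condition \eqref{eq:cond1} with $\epsilon=\frac{3.01\cdot 10^{-6}}{\log u}<\frac{1}{200}$, so Theorem \ref{theo:JR} is applicable.

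Since $g_n(d)=g(d)$ is independent of $n$, the theorem yields $X=V^{(j+1)}(z)|A^{(j)}|$, and by definition $|r(d)|=|r_{m_j}(d)|$ here, so the remainder term of the sieve is bounded by $|E_j|$. The hypothesis $D_j\ge z^2$ (and $D_\ell\ge z^2$) is exactly what is required to apply both the upper and lower bounds of Theorem \ref{theo:JR} with $s=s_j$ (respectively $s=s_\ell$). This produces the two-sided estimate
\begin{align*}
V^{(j+1)}(z)|A^{(j)}|\bigl(f(s_j)-\epsilon C_2(\epsilon)e^2h(s_j)\bigr)-|E_j|
&<S(A^{(j)},P^{(j+1)}(z))\\
&<V^{(j+1)}(z)|A^{(j)}|\bigl(F(s_j)+\epsilon C_1(\epsilon)e^2h(s_j)\bigr)+|E_j|.
\end{align*}

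To convert these into the form stated in the lemma, I write $f(s_j)=1-(1-f(s_j))$ and $F(s_j)=1+(F(s_j)-1)$. This isolates the leading term $V^{(j+1)}(z)|A^{(j)}|$ and puts the ``loss'' factors $(1-f(s_j))+\epsilon C_2(\epsilon)e^2 h(s_j)$ and $(F(s_j)-1)+\epsilon C_1(\epsilon)e^2 h(s_j)$ into a separate correction term. I then apply the upper bound
\begin{equation*}
V^{(j+1)}(z)\le \frac{U_N^{(j+1)}}{\log z}\left(1+\frac{32.1}{\log^2N}\right)=\frac{8U_N^{(j+1)}}{\log N}\left(1+\frac{32.1}{\log^2N}\right)
\end{equation*}
from Lemma \ref{vjlem} only inside these correction terms, leaving the leading $V^{(j+1)}(z)|A^{(j)}|$ symbolic. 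This yields precisely the expressions displayed in the lemma. The argument for $j=\ell$ is verbatim the same, now sifting $A^{(\ell)}$ by the primes dividing $P^{(\ell)}(z)$ with parameters $D_\ell$, $s_\ell$ and remainder $|E_\ell|$.

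The only non-routine step is the verification that Theorem \ref{theo:JR} genuinely applies with the stated remainder; this reduces to confirming (i) the Mertens condition \eqref{eq:cond1} for $g(p)=1/(p-1)$, which is supplied by the earlier lemma in Section \ref{section:LS} under the hypothesis on $u$, and (ii) that the cut-off $QD_j$ in the sieve remainder coincides with (or is bounded by) the cut-off appearing in the definition of $E_j$, which is immediate since the summands are restricted to $d\mid P^{(j+1)}(z)$ and so automatically avoid the primes excluded from $\mathbb{Q}$. Everything else is a direct algebraic rearrangement of Theorem \ref{theo:JR}, together with a single invocation of Lemma \ref{vjlem}.
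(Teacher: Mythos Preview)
Your approach is essentially identical to the paper's: apply Theorem \ref{theo:JR}, split off the leading $V^{(j+1)}(z)|A^{(j)}|$ via $f(s_j)=1-(1-f(s_j))$ and $F(s_j)=1+(F(s_j)-1)$, and then replace $V^{(j+1)}(z)$ by the upper bound from Lemma \ref{vjlem} in the correction terms only.

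There is one small but genuine omission. Replacing $V^{(j+1)}(z)$ by the \emph{larger} quantity $\frac{8U_N^{(j+1)}}{\log N}\bigl(1+\tfrac{32.1}{\log^2 N}\bigr)$ inside the subtracted correction term only preserves the lower bound if that correction term is non-negative, i.e.\ if $1-f(s_j)\ge 0$ (and analogously $F(s_j)-1\ge 0$ for the upper bound). The paper handles this explicitly: from $m_j\le k_1\le \log^{10}N$ one gets
\[
c_{\alpha_1,N}=4-8\alpha_1-\tfrac{160\log\log N}{\log N}\le s_j<4-8\alpha_1,
\]
so under the hypotheses of Theorem \ref{theo:S>1} each $s_j$ lies in $[3,4]$, and then the explicit formula $f(s)=\tfrac{2e^\gamma\log(s-1)}{s}$ for $2\le s\le 4$ shows $1-f(s_j)>0$. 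You should insert this sign check; without it the passage from the raw sieve inequality to the displayed form is unjustified.
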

\begin{proof}
    We only prove the lower bound for $j=0,\ldots,\ell-1$ as the proof for the other cases will follow by almost identical reasoning. We apply Theorem \ref{theo:JR} to the set $A^{(j)}$, with $\mathbb{P}$ the set of primes coprime to $N$, $g_n(p)=1/(p-1)$, $Q=Q(u_0)$, and $D=D^{(1)}_j$. With $P^{(j)}(z)$ as in \eqref{eq: def-mj-Aj-Pj(x)}, we thus have
    \begin{align*}
        S(A^{(j)}, P^{(j+1)}(z)) &> |A^{(j)}| V^{(j+1)}(z) \left( f(s^{(1)}_j) - \varepsilon C_2(\varepsilon) e^2 h(s) \right) - E_j \\
        &= \left| A^{(j)} \right| \left( V^{(j+1)}(z) - V^{(j+1)}(z) \cdot \left( 1 - f(s^{(1)}_j) + \varepsilon C_2(\varepsilon) e^2 h(s) \right) \right)-E_j.
    \end{align*}
    Next, we note that
    \begin{equation*}
        s^{(1)}_j=\frac{\log(D^{(1)}_j)}{\log z}=4-8\alpha_1-\frac{8\log(k_1) + 8\log(m_j)}{\log N}
    \end{equation*}
    so by \eqref{eq: def-c-alpha1-X2},
    \begin{equation*}
        c_{\alpha_1,N}=4-8\alpha_1-\frac{160\log\log N}{\log N}\le s^{(1)}_j<4-8\alpha_1
    \end{equation*}
    since $m_j\le k_1\le\log^{10}N$ by the definition \eqref{eq:k11} of $k_1$ and the definition \eqref{eq: def-mj-Aj-Pj(x)} of $m_j$. From the condition $0 < \alpha_1 < \frac{1}{8}$, which follows from \eqref{cond: S-A-P-z}, we have $3\le s^{(1)}_j\le 4$ and hence by \eqref{eq: approx-f24}
    \begin{equation*}
        1-f(s^{(1)}_j)=1-\frac{2e^{\gamma}\log(s^{(1)}_j-1)}{s^{(1)}_j}>0.
    \end{equation*}
    The result then follows by applying the upper bound in \eqref{vjzbound} in Lemma \ref{vjlem}.
\end{proof}
By the definition of $D_j^{(1)}$ from Lemma \ref{linearlem} and \eqref{qeq}, we have $Q D_j^{(1)} \le H/m_j$, and thus we can apply Lemma \ref{544small} to get the following bound for $E_j$ (with $E_j$ defined in Lemma \ref{linearlem}).
\begin{lemma}\label{errorlem}
With the notations from Lemma \ref{linearlem}, we have for $j=0,\ldots,\ell$,
    \begin{equation*}
        E_j<\frac{c_4^*(X_2)N}{\log^3N}.
    \end{equation*}
\end{lemma}
~\newline
\begin{proof}[Proof of Theorem \ref{theo:S>1} in case (b).]
We assume that $k_1\ge K_{\delta}(x_1)$.
Combining Lemmas \ref{elllem}, \ref{exinclem}, \ref{linearlem} and \ref{errorlem} gives
\begin{align}
    S(A,&P(z))>\sum_{j=0}^{\ell-1}(-1)^j|A^{(j)}|V^{(j+1)}(z)+(-1)^{\ell}|A^{(\ell)}|V^{(\ell)}(z)\notag\\
    &-8U_N^{(1)}|A|\left(1+\frac{32.02}{\log^2 N}\right)\left(\frac{(1-f(c_{\alpha_1,X_2}))+\varepsilon C_2(\varepsilon)e^2h(c_{\alpha_1,X_2})}{\log N}\right)\notag\\
    &-8\left(\sum_{j=1}^{\ell-1}|A^{(j)}|U_N^{(j+1)}+|A^{(\ell)}|U_N^{(\ell)}\right)\left(1+\frac{32.02}{\log^2N}\right)\left(\frac{\overline{m}_{\alpha_1,X_2}+\varepsilon \overline{C}(\varepsilon)e^2h(c_{\alpha_1,X_2})}{\log N}\right)\notag\\
    &-\frac{c_4^*(X_2)N}{\log^3N}\frac{1.3841\log(\log^{10}x_1(N))}{\log\log(\log^{10}x_1(N))},\label{bigeq}
\end{align}
where we recall that $\overline{m}_{\alpha_1,X_2}$ is defined in \eqref{eq: def-m-bar-alpha1-X2}. Above, we used that $c_{\alpha_1, X_2} \le s^{(1)}_j$ for all $j$ and that $h(s)$ and $F(s)$ are decreasing whereas $f(s)$ is increasing for $3 \le s \le 4$ --- this follows from the definitions of these functions \eqref{eq-def-h(s)}, \eqref{eq: approx-F34} and \eqref{eq: approx-f24}.

We note that the term corresponding to $j = 0$ is written separately in line 2 of \eqref{bigeq} since it will be estimated differently from the cases $0 < j < \ell$ from line 3. 

We now bound each line in \eqref{bigeq}.

\begin{lemma} \label{lem: new-case-b-main}
Keep the notations from the beginning of Section \ref{section:1} and Theorem \ref{theo:S>1}, and assume $k_1 \ge K_{\delta}(x_1)$.
Let $A^{(j)}$ (with $A = A^{(0)}$) and $V^{(j)}(z)$ be as in \eqref{eq: def-mj-Aj-Pj(x)} and \eqref{eq: def-V(x)-Vj(x)} respectively. Then
\begin{equation}\label{firstlineeq}
    \sum_{j=0}^{\ell-1}(-1)^j|A^{(j)}|V^{(j+1)}(z)+(-1)^{\ell}|A^{(\ell)}|V^{(\ell)}(z)=|A|V(z)\left(1+\theta a(X_2)\right)
\end{equation}
where $|\theta|\le 1$ and $a(X_2)$ is defined in \eqref{eq: def-a(X_2)}.
\end{lemma}

\begin{proof}
By the definition of $r(m_j)$, given in \eqref{eq: def-r(d)-rk(d)}, we have
\begin{align*}
    \sum_{j=0}^{\ell-1}(-1)^j|A^{(j)}|V^{(j+1)}(z)&+(-1)^{\ell}|A^{(\ell)}|V^{(\ell)}(z)=\\
    &\sum_{j=0}^{\ell-1}(-1)^j\frac{|A|}{\varphi(m_j)}V^{(j+1)}(z)+(-1)^{\ell}\frac{|A|}{\varphi(m_\ell)}V^{(\ell)}(z)\\
    &\qquad+\sum_{j=0}^{\ell-1}(-1)^jr(m_j)V^{(j+1)}(z)+(-1)^{\ell}r(m_\ell)V^{(\ell)}(z),
\end{align*}
where $V^{(j)}(z)$ is defined in \eqref{eq: def-V(x)-Vj(x)}.
Writing \label{def-varphi-*-n} $\varphi^*(n)=n\prod_{p\mid n}\frac{p-2}{p}$ (so that $\varphi^*(m_j)=\prod_{i=1}^j(q_i-2$) for $j\ge 1$), we have
\begin{equation*}
    \frac{|A|}{\varphi(m_j)}V^{(j+1)}(z)=\frac{|A|V(z)}{\varphi^*(m_j)}\left(1+\frac{1}{q_{j+1}-2}\right)
\end{equation*}
so that
\begin{equation}\label{firstlinephieq}
    \sum_{j=0}^{\ell-1}(-1)^j\frac{|A|}{\varphi(m_j)}V^{(j+1)}(z)=|A|V(z)\left(1+\frac{(-1)^{\ell-1}}{\varphi^*(m_\ell)}\right)
\end{equation}
and thus
\begin{equation*}
    \sum_{j=0}^{\ell-1}(-1)^j\frac{|A|}{\varphi(m_j)}V^{(j+1)}(z)+(-1)^{\ell}\frac{|A|}{\varphi^*(m_{\ell})}V^{(\ell)}(z)=|A|V(z).
\end{equation*}
Next, by Lemmas \ref{elllem} and \ref{lemma:r}
\begin{align}
    \left|\sum_{j=0}^{\ell-1}(-1)^jr(m_j)V^{(j+1)}(z)+(-1)^{\ell}r(m_\ell)V^{(\ell)}(z)\right|&\le a_1(X_2)\frac{NV^{(\ell)}(z)\log\log\log N}{\log^{1+\delta}N}\notag\\
    &\le a_1(X_2)\frac{|A|V^{(\ell)}(z)\log\log\log N}{\log^{\delta}N}\label{a1eq},
\end{align}
with $a_1(X_2)$ defined in \eqref{eq: def-a_1(X_2)}. We have
\begin{align}
    1 \le \frac{V^{(\ell)}(z)}{V(z)}&=\prod_{j=1}^\ell\frac{q_j-1}{q_j-2}\notag\\
    &\le\prod_{p>2}\frac{(p-1)^2}{p(p-2)}\frac{k_1}{\varphi(k_1)}\notag\\
    &\le\prod_{p>2}\frac{(p-1)^2}{p(p-2)}\left(e^{\gamma}\log\log k_1+\frac{2.5}{\log\log k_1}\right)\label{phiineq}\\
    &\le\prod_{p>2}\frac{(p-1)^2}{p(p-2)}\left(e^{\gamma}\log\log\log^{10}(x_1(N))+\frac{2.5}{\log\log\log^{10}(x_1(N))}\right),\label{vleq} 
\end{align}
where in \eqref{phiineq} we used \cite[Theorem 15]{R-S} noting that $k_1$ is odd, and in \eqref{vleq} we used that $k_1\ge K_{\delta}(x_1)\ge 3022$ so that the expression is increasing. 

Substituting \eqref{vleq} into \eqref{a1eq} and using the definition \eqref{eq: def-a(X_2)} of $a(X_2)$ then completes the proof of \eqref{firstlineeq}.
\end{proof}

We now move onto the second line of \eqref{bigeq}. By \eqref{vjzbound} in Lemma \ref{vjlem} we have
\begin{equation*}
    \frac{8 U_N}{\log N} < V(z) + \frac{8 \cdot 32.02 U_N}{\log^3 N}.
\end{equation*}
Thus, by the formula \eqref{eq: approx-f24} of $f$, for any $3 \le s' \le 4$, 
\begin{equation*}
    \frac{8U_N}{\log N}-\frac{8U_Nf(s')}{\log N}<V(z)+8\left(\frac{32.02 U_N}{\log^3 N}-\frac{2U_N e^{\gamma}\log(s'-1)}{s'\log N}\right).
\end{equation*}

Using this result, along with Lemma \ref{e0lem} and the definition \eqref{eq: def-c-alpha1-X2} of $c_{\alpha_1,X_2} \in [3,4]$, gives
\begin{align}
    &8|A|U_N^{(1)}\left(\frac{1-f(c_{\alpha_1,X_2})+\varepsilon C_2(\varepsilon)e^2h(c_{\alpha_1,X_2})}{\log N}\right) \nonumber\\
    &<|A|V(z)+8|A|U_N\Bigg\{-\frac{2e^{\gamma}\log\left(3-8\alpha_1-\frac{160\log\log X_2}{\log X_2}\right)}{(4-8\alpha_1-\frac{160\log\log X_2}{\log X_2})\log N}+\frac{32.02}{\log^3 N} \nonumber\\
    &\quad+\frac{\varepsilon_1(X_2,\delta)(1-f(c_{\alpha_1,X_2}))}{\log N}+\frac{(1+\varepsilon_1(X_2,\delta))\varepsilon C_2(\varepsilon)e^2h(c_{\alpha_1,X_2})}{\log N}\Bigg\}. \label{eq: bound-U-N-1}
\end{align}

Finally, we deal with the third line of \eqref{bigeq}.
\begin{lemma} \label{lem: new-case-b-line-3}
Keep the notations from the beginning of Section \ref{section:1} and Theorem \ref{theo:S>1}, and assume $k_1 \ge K_{\delta}(x_1)$.
Let $A^{(j)}$ (with $A = A^{(0)}$) and $V^{(j)}(z)$ be as in \eqref{eq: def-mj-Aj-Pj(x)} and \eqref{eq: def-V(x)-Vj(x)} respectively. Then
\begin{equation}
    \sum_{j=1}^{\ell-1}|A^{(j)}|U_N^{(j+1)}+|A^{(\ell)}|U_N^{(\ell)}\notag \le |A|U_N \left(3\varepsilon_1(X_2,\delta)+a(X_2) \right),
\end{equation}
where $a(X_2)$ is defined in \eqref{eq: def-a(X_2)}.
\end{lemma}
\begin{proof}
The argument is analogous to the proof of Lemma \ref{lem: new-case-b-main}. Namely, by the definition of $r(m_j)$, given in \eqref{eq: def-r(d)-rk(d)}, we have
\begin{align}
    \sum_{j=1}^{\ell-1}|A^{(j)}|U_N^{(j+1)}&+|A^{(\ell)}|U_N^{(\ell)}\notag\\
    =\sum_{j=1}^{\ell-1}\frac{|A|}{\varphi(m_j)}U_N^{(j+1)}+\frac{|A|}{\varphi(m_\ell)}U_N^{(\ell)}&+\sum_{j=1}^{\ell-1}r(m_j)U_N^{(j+1)}+r(m_\ell)U_N^{(\ell)}\label{thirdlineeq}.
\end{align}
Similarly to \eqref{firstlinephieq},
\begin{equation*}
    \frac{|A|}{\varphi(m_j)}U_N^{(j+1)}=\frac{|A|U_N}{\varphi^*(m_j)}\left(1+\frac{1}{q_{j+1}-2}\right)
\end{equation*}
and 
\begin{equation*}
    \frac{|A|}{\varphi(m_\ell)}U_N^{(\ell)}=\frac{|A|U_N}{\varphi^*(m_\ell)}
\end{equation*}
so that
\begin{align*}
    &\sum_{j=1}^{\ell-1}\frac{|A|}{\varphi(m_j)}U_N^{(j+1)}+\frac{|A|}{\varphi(m_\ell)}U_N^{(\ell)}
    =\\
    &=\frac{|A|U_N}{q_1 - 2}\left(1+\frac{2}{q_2-2}+\frac{2}{(q_2-2)(q_3-2)}+\cdots+\frac{2}{(q_2-2)\cdots(q_\ell-2)}\right)\\
    &\le 3|A|U_N\varepsilon_1(X_2,\delta).
\end{align*}
In the last line, we used Lemma \ref{leq1lem} and the inequality $\frac{1}{q_1 - 2} \le \varepsilon_1(X_2, \delta)$, which follows from definition \eqref{eq: def-epsilon0-X2-delta} of $\varepsilon_1(X_2, \delta)$.

Therefore, we have bounded the first two terms of \eqref{thirdlineeq}. We use Lemma \ref{lemma:r} and that $U_N^{(j)} \le U_N^{(\ell)}$ for $0 \le j \le \ell$ following from the definition \eqref{eq: def-UNj}, to bound the last two terms:
\begin{align*}
    \left| \sum_{j=1}^{\ell-1}r(m_j)U_N^{(j+1)}+r(m_\ell)U_N^{(\ell)}\right| &< (\ell - 1) c_2(X_2) \frac{N}{\log^3 N} U_N^{(\ell)} + \frac{c_3(X_2) N \log \log N}{\log^{1 + \delta} N} U_N^{(\ell)} \\
    &\le \left(\frac{a_1(X_2)\log\log\log N}{\log^{1+\delta}N}\right)U_N^{(\ell)},
\end{align*}
by the definition \eqref{eq: def-a_1(X_2)} of $a_1(X_2)$ and the bound for $\ell$ from Lemma \ref{elllem}.

Thus we get an upper bound for \eqref{thirdlineeq}:
\begin{equation*}
    \sum_{j=1}^{\ell-1}|A^{(j)}|U_N^{(j+1)}+|A^{(\ell)}|U_N^{(\ell)}\le 3|A|U_N\varepsilon_1(X_2,\delta)+\left(\frac{a_1(X_2)|A|\log\log\log N}{\log^{\delta}N}\right)U_N^{(\ell)}.
\end{equation*}
Similarly to \eqref{vleq}, we derive
\begin{align*}
    \frac{U_N^{(\ell)}}{U_N}\le\prod_{p>2}\frac{(p-1)^2}{p(p-2)}\left(e^{\gamma}\log\log\log^{10}(x_1(N))+\frac{2.5}{\log\log\log^{10}(x_1(N))}\right),
\end{align*}
whence
\begin{align*}
    &\sum_{j=1}^{\ell-1}|A^{(j)}|U_N^{(j+1)}+|A^{(\ell)}|U_N^{(\ell)}\le 3|A|U_N\varepsilon_1(X_2,\delta)+a(X_2)U_N \\
    &\le |A|U_N \left(3\varepsilon_1(X_2,\delta) + a(X_2)\right) \le |A|U_N \left(3\varepsilon_1(X_2,\delta) + a(X_2) \right),
\end{align*}
where in the last inequality we used that $|A| > \frac{N}{\log N} > \frac{X_2}{\log X_2}$ by \eqref{eq:A>}.
\end{proof}
We combine \eqref{bigeq} with Lemmas \ref{lem: new-case-b-main} and \ref{lem: new-case-b-line-3} and the bound \eqref{eq: bound-U-N-1} to obtain:
\begin{align}\label{eq: second-last-bound-TheoA}
    S(A,&P(z))> |A|V(z)-a(X_2)|A|V(z) \\
    &-\left(1 + \frac{32.02}{\log^2 N} \right)\Bigg\{|A|V(z)+8|A|U_N\Bigg[-\frac{2e^{\gamma}\log\left(3-8\alpha_1-\frac{160\log\log X_2}{\log X_2}\right)}{(4-8\alpha_1-\frac{160\log\log X_2}{\log X_2})\log N}+\frac{32.02}{\log^3 N} \nonumber\\
    &+\frac{\varepsilon_1(X_2,\delta)(1-f(c_{\alpha_1,X_2}))}{\log N}+\frac{(1+\varepsilon_1(X_2,\delta))\varepsilon C_2(\varepsilon)e^2h(c_{\alpha_1,X_2})}{\log N}\Bigg]\Bigg\}\notag\\
    &-8|A|U_N \left(3\varepsilon_1(X_2,\delta)+a(X_2)\right)\left(1+\frac{32.02}{\log^2N}\right)\left(\frac{\overline{m}_{\alpha_1,X_2}+\varepsilon \overline{C}(\varepsilon)e^2h(c_{\alpha_1,X_2})}{\log N}\right)\notag\\
    &-\frac{c_4^*(X_2)N}{\log^3N}\frac{1.3841\log(\log^{10}x_1(N))}{\log\log(\log^{10}x_1(N))}.\notag 
\end{align}
The terms involving $V(z)$ from the first and the second lines cancel out as follows:
\begin{align*}
    |A|V(z)-a(X_2)|A|V(z) - \left(1 + \frac{32.02}{\log^2 N} \right)|A|V(z) &= -|A|V(z) \left( a(X_2) + \frac{32.02}{\log^2 N}\right) \\
    &> -\frac{8|A|U_N}{\log N} \left( a(X_2) + \frac{32.02}{\log^2 N}\right),
\end{align*}
where we used the bound \eqref{vjzbound} in the last inequality. The lower bound \eqref{eq: second-last-bound-TheoA} therefore simplifies to
\begin{align*}
    &\frac{8|A|U_N}{\log N}\left(1 + \frac{32.02}{\log^2 N} \right)\Bigg\{\frac{2e^{\gamma}\log\left(3-8\alpha_1-\frac{160\log\log X_2}{\log X_2}\right)}{(4-8\alpha_1-\frac{160\log\log X_2}{\log X_2})}-\frac{32.02}{\log^2 N}-a(X_2) - \frac{32.02}{\log^2 N} \nonumber\\
    &-\varepsilon_1(X_2,\delta)(1-f(c_{\alpha_1,X_2}))-(1+\varepsilon_1(X_2,\delta))\varepsilon C_2(\varepsilon)e^2h(c_{\alpha_1,X_2})\nonumber\\
    &-\left(3\varepsilon_1(X_2,\delta)+a(X_2) \right)\left(\overline{m}_{\alpha_1,X_2}+\varepsilon \overline{C}(\varepsilon)e^2h(c_{\alpha_1,X_2})\right)\Bigg\},\notag\\
    &-\frac{c_4^*(X_2)N}{\log^3N}\frac{1.3841\log(\log^{10}x_1(N))}{\log\log(\log^{10}x_1(N))}.
\end{align*}
which completes the proof of case (b) in Theorem \ref{theo:S>1} upon noting that $|A| > \frac{N}{\log N}$ by \eqref{eq:A>} and $U_N^{-1} \le \left(2e^{\gamma}\prod_{p>2}\left(1-\frac{1}{(p-1)^2}\right)\right)^{-1}$ which follows from the definition of $U_N = U^{(0)}_N$ in \eqref{eq: def-UNj}.
\end{proof}

\section{An upper bound for $\sum_{z \le q<y} S(A_q,P(z))$}
\label{section:2}
In this section, we shall obtain an upper bound for the sum over primes $\sum_{z \le q < y} S(A_q,P(z))$ with $z=N^{1/8}$, $y=N^{1/3}$ and each $q$ not dividing $N$. This is the second term appearing in the bound for $\pi_2(N)$ in Lemma \ref{lem:>}. Compared to the lower bound for $S(A,P(z))$ in \S \ref{section:1}, this is obtained in a quite straightforward way, using Theorem~\ref{theo:JR} and Lemmas~\ref{small533} and \ref{large533}.
We start by defining 
\begin{equation} \label{eq: def-k-x}
    k_{x}:=8 \left(\frac{1}{2}-\frac{1}{3}-x \right)=8\left(\frac{1}{6}-x\right).
\end{equation}
\begin{theorem}
\label{theo:2.}
Let $u_0=10^9$ and $\varepsilon=1.452\cdot 10^{-7}$ be the corresponding values in Lemma~\ref{lem: prod-u>u_0}. Let $x_1$ be as in \eqref{notcon1} and $X_2$ be such that $\log\log x_1(X_2)\ge 10.4$. Assume that $0<\alpha_2 <1/24$ and $N\ge X_2$ is an even integer such that
\begin{equation*}
\frac{N^{\alpha_2}}{\log^{10} x_1(N) \log^{2.5} N}\ge \exp \left(u_0\left(1+\frac{9\cdot 10^{-7}}{\log u_0} \right)\right).
\end{equation*}
Let $U_N = U_N^{(0)}$ and $K_{\delta}(x_1)$ be as in \eqref{eq: def-UNj} and \eqref{notcon2} respectively. Consider the cases (a) $k_1<K_{\delta}(x_1)$ and (b) $k_1\ge K_{\delta}(x_1)$. In case (a) we have
\begin{align*}
\sum_{\substack{z \le q < y\\ q\nmid N}} S(A_q,P(z)) <& \frac{U_N N}{\log^2N}\Bigg(8.0004\left( 1+\frac{32.02}{\log^2 N}\right)(l_1(X_2)+l_2(X_2))\\&\qquad\qquad+\frac{l(X_2)}{2 e^{\gamma}\prod_{p>2}\left( 1-\frac{1}{(p-1)^2}\right) \log N} \Bigg).
\end{align*}
On the other hand, in case (b)
\begin{align*}
\sum_{\substack{z \le q < y\\q\nmid N}} S(A_q,P(z)) <& \frac{U_N N}{\log^2N}\Bigg(8.0004\left( 1+\frac{32.02}{\log^2 N}\right)\left(1+\varepsilon_1(X_2,\delta)\right)(l_1^*(X_2)+l_2(X_2))\\&\qquad\qquad+\frac{l^*(X_2)}{2 e^{\gamma}\prod_{p>2}\left( 1-\frac{1}{(p-1)^2}\right) \log N} \Bigg),
\end{align*}
where
\begin{align}
l(X_2):=&p(X_2)\left(\frac{1}{\log X_2}+0.55 \right)\notag\\&+ 1.3841\left(\log \frac{8}{3}+\frac{64}{\log^2 X_2} \right)\frac{ \sqrt{x_1(X_2)}\log^3 X_2}{X_2 \left(\log \log X_2\right)\left( \log^{10} x_1(X_2)\right)},\label{elleq}\\
l^*(X_2):=&p^*(X_2)\left(\frac{1}{\log X_2}+0.55 \right)\notag\\&+ 1.3841\left(\log \frac{8}{3}+\frac{64}{\log^2 X_2} \right)\frac{ \sqrt{x_1(X_2)}\log^3 X_2}{X_2 \left(\log \log X_2\right)\left( \log^{10} x_1(X_2)\right)},\label{ellstareq}\\
l_1(X_2):=&\frac{\log^2 X_2}{X_2}  \left(\frac{ 2e^{\gamma}}{k_{\alpha_2}}
+\varepsilon C_1(\varepsilon) e^2 h(k_{\alpha_2})\right) \notag\\ & \cdot \left(\frac{X_2^{1/8}}{X_2^{1/8}-1}\left(\log \frac{8}{3}+\frac{64}{\log^2X_2}\right)\frac{1.3841}{\log \log X_2} +\frac{p(X_2)X_2}{\log^4X_2}\right),\label{ell1eq}\\
l_1^*(X_2):=&\frac{\log^2 X_2}{X_2}  \left(\frac{ 2e^{\gamma}}{k_{\alpha_2}}
+\varepsilon C_1(\varepsilon) e^2 h(k_{\alpha_2})\right) \notag\\ & \cdot \left(\frac{X_2^{1/8}}{X_2^{1/8}-1}\left(\log \frac{8}{3}+\frac{64}{\log^2X_2}\right)\frac{1.3841}{\log \log X_2} +\frac{p^*(X_2)X_2}{\log^4X_2}\right),\label{ell1stareq}\\
l_2(X_2)&:=\frac{X_2^{1/8}}{X_2^{1/8}-1} 
\Bigg[\frac{e^{\gamma}}{4}\left(\frac{\log(6)+\log\left(\frac{3-8\alpha_2}{3-18\alpha_2}\right)}{\left(\frac{1}{2}-\alpha_2\right) }+\frac{512}{k_{\alpha_2}\log^2X_2}\right)\notag\\& +\left(\log \frac{8}{3}+\frac{64}{\log^2X_2}\right)\varepsilon C_1(\varepsilon) e^2 h(k_{\alpha_2}) \Bigg],\label{ell2eq}
\end{align}
with $p(X_2)$ and $p^*(X_2)$ defined in Lemmas~\ref{small533} and \ref{large533}, $\varepsilon_1(X_2,\delta)$ in \eqref{eq: def-epsilon0-X2-delta}, and $C_1(\varepsilon)=106$ is as in Table \ref{tab:fF}.
\end{theorem}
\begin{proof}
We again begin with case (a) $k_1<K_{\delta}(x_1)$. Let $N^{\frac{1}{8}}=z\le q<y=N^{\frac{1}{3}}$. Similar to the proof of Theorem \ref{theo:S>1} we set
\begin{equation} \label{eq: def-Q-D2-D2q-sq}
    Q(u):=\prod_{\substack{p< u\\p\nmid N}} p,\quad D^{(2)}:=N^{\frac{1}{2}-\alpha_2},\quad D^{(2)}_q:=\frac{D^{(2)}}{q}, \quad \text{and} \quad s^{(2)}_q:=\frac{\log D^{(2)}_q}{\log z}.
\end{equation}
The condition
\begin{equation*}
    \frac{N^{\alpha_2}}{\log^{10} x_1(N) \log^{2.5} N}\ge \exp\left(u_0\left(1+\frac{9\cdot 10^{-7}}{\log u_0} \right)\right)
\end{equation*}
then guarantees that
\begin{equation}
\label{cond:N*}
\frac{N^{\alpha_2}}{\log^{10} x_1(N) \log^{2.5} N}\ge Q(u_0)
\end{equation} 
by Lemma \ref{thetauplem} and we also have $qD^{(2)}_qQ(u_0)\le H=\frac{\sqrt{x_1}}{\log^{10}x_1}$. Moreover, the condition $\alpha_2<1/24$ gives
\begin{equation}
\label{cond:N.1*}
\frac{N^{\frac{1}{2}-\alpha_2}}{q}\ge z,
\end{equation} 
for all $z\le q<y$. Namely, this means that we can apply the upper bound $\eqref{suppereq}$ in Theorem \ref{theo:JR}. In particular, setting $g_n(p)=1/(p-1)$, $A=A_q$, $Q=Q(u_0)$ and $D=D^{(2)}_q$ in \eqref{suppereq} gives
\begin{equation}\label{eq:up1}
    \sum_{\substack{z \le q<y\\ q\nmid N}} S(A_q,P(z))\le 8U_N\left( 1+\frac{32.02}{\log^2 N}\right)\sum_{\substack{z \le q < y\\q\nmid N}}\left|A_q\right|\left(\frac{F(s^{(2)}_q)+\varepsilon C_{1}(\varepsilon) e^2 h(s^{(2)}_q)}{\log N}\right)
\end{equation}
\begin{equation*}
    \qquad\qquad\qquad\qquad\qquad\qquad\qquad+\sum_{\substack{z \le q < y\\ q\nmid N}}\sum_{\substack{d|P(z)\\d<D^{(2)}_qQ(u_0)}}\left|r_{q} (d)\right|.
\end{equation*}
where we have used \eqref{eq: def-X_A}, namely,
\begin{equation*}
    X_A:=|A|\prod_{p\mid P(z)}(1-g_n(p))\le \frac{8U_N}{\log N}\left(1+\frac{32.02}{\log^2 N}\right).
\end{equation*}
by \eqref{vjzbound} in Lemma \ref{vjlem}.

We start by bounding the sum over $|r_q(d)|$ in \eqref{eq:up1}. By the definition of $r_q(d)$ given in \eqref{rkddef},
\begin{align}\label{sumrq}
\sum_{\substack{z \le q < y\\q\nmid N}}\sum_{\substack{d|P(z)\\d<D^{(2)}_qQ(u_0)}}\left|r_{q} (d)\right|
\le \sum_{\substack{z \le q < y\\q\nmid N}}\sum_{\substack{d|P(z)\\d<D^{(2)}_qQ(u_0)}}\left(\left|E_{\pi}(N;qd,N)\right|+\frac{\left|E_{\pi}(N;q,N)\right|}{\varphi (d)}+\omega(N)\right),
\end{align}
noting that $(q,d)=1$ since $q$ is a prime greater than or equal to $z=N^{\frac{1}{8}}$. We thus have to bound three sums. Since $qD^{(2)}_qQ(u_0)\le H$, we have by Lemma \ref{small533} that the first sum in \eqref{sumrq} can be bounded as
\begin{equation}\label{ellfirstbound}
    \sum_{\substack{z \le q < y\\ q\nmid N}}\sum_{\substack{\substack{d|P(z)\\d<D^{(2)}_qQ(u_0)}}}\left|E_{\pi}(N;qd,N)\right| < \frac{p(X_2)N}{\log^3 N}.
\end{equation}
We can now bound the second sum in \eqref{sumrq} using Lemmas \ref{muphilem} and \ref{small533}. That is,
\begin{align}
\label{eq:reasonx2}
 \sum_{\substack{z \le q < y\\q\nmid N}}\sum_{\substack{d|P(z)\\d<D^{(2)}_qQ(u_0)}}\frac{\left|E_{\pi}(N;q,N)\right|}{\varphi (d)} \le \left(\sum_{z \le q < y} \left|E_{\pi}(N;q,N)\right| \right)\left( \sum_{\substack{d\le H\\ d\mid P(z)}}\frac{1}{\varphi (d)}\right)\le
0.55\frac{p(X_2)N}{\log^2N}.
\end{align}
Finally, for the third sum in \eqref{sumrq}, Lemmas \ref{1plem} and \ref{omegalem} give
\begin{equation}\label{ellfinalbound}
\sum_{\substack{z \le q < y\\q\nmid N}}\sum_{\substack{d|P(z)\\d<D^{(2)}_qQ(u_0)}}\omega(N)\le  1.3841\frac{\log N}{\log \log N}\left(\log \frac{8}{3}+\frac{64}{\log^2 N} \right)\frac{\sqrt{x_1(N)}}{\log^{10} x_1(N)},
\end{equation}
noting that $\log\log(y/z)=\log\log(N^{1/3}/N^{1/8})=\log\frac{8}{3}$. Hence, with $l(X_2)$ defined as in \eqref{elleq}, the three bounds \eqref{ellfirstbound}, \eqref{eq:reasonx2} and \eqref{ellfinalbound} give
\begin{equation}
\label{eq:rsup}
\sum_{\substack{z \le q < y\\q\nmid N}}\sum_{\substack{d|P(z)\\d<D^{(2)}_qQ(u_0)}}\left|r_{q} (d)\right|\le \frac{l(X_2)N}{\log^2 N}.
\end{equation}
We now bound the sum on the right-hand side of \eqref{eq:up1}. By \eqref{cond:N.1*} and the definition of $s^{(2)}_q$, we have $1< s^{(2)}_q<3$ and thus $F(s^{(2)}_q)=\frac{2e^{\gamma}}{s^{(2)}_q}$ by \eqref{eq: approx-F03}. Moreover, we note that $s^{(2)}_q\ge k_{\alpha_2}$ and by \eqref{AAqeq} 
\begin{equation*}
    \left|A_q\right|\le \frac{\left|A\right|+\omega(N)}{q-1}+E_{\pi}(N;q,N).
\end{equation*}
Therefore, 
\begin{align}
\nonumber
\sum_{\substack{z \le q < y\\q\nmid N}}\left|A_q\right|&\left(\frac{F(s^{(2)}_q)+\varepsilon C_{1}(\varepsilon) e^2 h(s^{(2)}_q)}{\log N}\right)
\\\le &
\label{eq:sf1}
 \sum_{\substack{z \le q < y\\q\nmid N}}\frac{\left|A\right|}{q-1} \left(\frac{ e^{\gamma}}{4\log D^{(2)}/q}+\frac{\varepsilon C_{1}(\varepsilon) e^2 h(k_{\alpha_2})}{\log N}\right)
 \\&
\label{eq:sf2}
+ \left(\frac{ e^{\gamma}}{4\log D^{(2)}/y}
+\frac{\varepsilon C_{1}(\varepsilon) e^2 h(k_{\alpha_2})}{\log N}\right)\sum_{\substack{z \le q < y\\q\nmid N}} \left(\frac{\omega(N)}{q-1} +E_{\pi}(N;q,N)\right).
\end{align}
where we have used that $h(s)$ is decreasing. We start bounding \eqref{eq:sf2}. Using Lemma~\ref{small533} we obtain $\sum_{z \le q < y} E_{\pi}(N;q,N)<\frac{p(X_2)N}{\log^3N}$. Then, by Lemmas \ref{1plem} and \ref{omegalem},
\begin{equation*}
\sum_{\substack{z \le q < y\\q\nmid N}}\frac{\omega(N)}{q-1}\le \frac{N^{1/8}}{N^{1/8}-1} \left(\log \frac{8}{3}+\frac{64}{\log^2N}\right)\frac{1.3841\log N}{\log \log N}.
\end{equation*}
This allows us to bound \eqref{eq:sf2} with
\begin{equation}
\label{eq:sf22}
    \left(\frac{ 2e^{\gamma}}{k_{\alpha_2}}+\varepsilon C_{1}(\varepsilon) e^2 h(k_{\alpha_2})\right)\left(\frac{N^{1/8}}{N^{1/8}-1}\left(\log \frac{8}{3}+\frac{64}{\log^2N}\right)\frac{1.3841}{\log \log N} +\frac{p(X_2)N}{\log^4N}\right).
\end{equation}
Dividing \eqref{eq:sf22} by $N/\log^2N$ gives rise to a monotonically decreasing function for $N\ge X_2$ which is thus bounded by $l_1(X_2)$ (Equation \eqref{ell1eq}). We now bound \eqref{eq:sf1}. Applying Lemma \ref{lemma:sump} with $f(t)=1/\log(D/t)$, $g(t)=\log\log t$, 
\begin{equation*}
    c(n)=
    \begin{cases}
        1/n,&\text{if $n$ is prime},\\
        0,&\text{otherwise}
    \end{cases}
\end{equation*}
and $E=64/\log^2 N$ (as a consequence of Lemma \ref{1plem}), we have
\begin{align}
    \sum_{\substack{z \le q < y\\q\nmid N}}\frac{1}{q\log D^{(2)}/q}&\le \sum_{z \le q < y}\frac{1}{q\log D^{(2)}/q}\le\int_z^y\frac{1}{t\log t \log D^{(2)}/t}\mathrm{d}t+\frac{64}{\log^2N}\frac{1}{\log D^{(2)}/y}\notag\\
    & = \frac{\log(6)+\log\left(\frac{3-8\alpha_2}{3-18\alpha_2}\right)}{\left(\frac{1}{2}-\alpha_2\right) \log N}+\frac{512}{k_{\alpha_2}\log^3N},\label{logdtint}
\end{align}
where we substituted $z=N^{1/8}$, $y=N^{1/3}$ and $D^{(2)}=N^{1/2-\alpha_2}$ to obtain the final equality. Using \eqref{logdtint} and Lemma \ref{1plem}, we have that \eqref{eq:sf1} is at most
\begin{align}
\label{eq:sf11}
&\frac{\left|A\right|}{\log N}\frac{N^{1/8}}{N^{1/8}-1}\Bigg[\frac{e^{\gamma}}{4} \left( \frac{\log(6)+\log\left(\frac{3-8\alpha_2}{3-18\alpha_2}\right)}{\frac{1}{2}-\alpha_2}+\frac{512}{k_{\alpha_2}\log^2N}\right) \notag\\
&\qquad\qquad+\left(\log \frac{8}{3}+\frac{64}{\log^2N}\right)\varepsilon C_{1}(\varepsilon) e^2 h(k_{\alpha_2}) \Bigg]\\
&\le \frac{|A|}{\log N}l_2(X_2)\label{lx2argument},
\end{align}
noting that each term in \eqref{eq:sf11} (upon taking out the factor of $|A|/\log N$) is either constant or decreasing in $N$. Combining \eqref{eq:A>}, \eqref{eq:rsup}, \eqref{eq:sf22} and \eqref{eq:sf11} we obtain the desired result and thereby finish the proof of case (a).

Now we consider the case (b) $k_1\ge K_{\delta}(x_1)$. This case requires a slightly different argument to case (a). In particular, we can no longer apply Lemma \ref{small533} for such large values of $k_1$. To circumvent this, we note that $\sum_{z \le q < y} S(A_q,P(z))\le \sum_{z \le q < y} S(A_q,P^{(1)}(z))$ so that it suffices to bound the latter. Working with $P^{(1)}(z)$ as opposed to just $P(z)$ then allows us to guarantee $d\neq k_1$ in the sieve remainder term. In particular, defining
\begin{equation}\label{Q1eq}
    Q^{(1)}(u):=\prod_{\substack{p< u,\ p\nmid N \\ p\neq q_1}}p
\end{equation}
we then, similar to case (a), use Lemma \ref{vjlem} and Theorem \ref{theo:JR} with $g_n(p)=1/(p-1)$, $A=A_q$, $Q=Q^{(1)}(u_0)$ and $D=D^{(2)}_q$ in \eqref{suppereq} to obtain
\begin{align*}
    S(A_q,P^{(1)}(z))<&\left|A_q\right|\left(8U_N^{(1)}\left( 1+\frac{32.02}{\log^2 N}\right)\frac{ F(s^{(2)}_q)+\varepsilon C_{1}(\varepsilon) e^2 h(s^{(2)}_q)}{\log N}\right) \\ &+\sum_{\substack{d|P^{(1)}(z)\\d<D^{(2)}_qQ^{(1)}(u_0)}}\left|r_{q} (d)\right|.
\end{align*}
Here,
\begin{equation*}
    U_N^{(1)}\le U_N(1+\varepsilon_1(X_2,\delta))
\end{equation*}
by Lemma \ref{e0lem}. The proof then follows as in case (a). The only difference is that we have
\begin{align}
    \sum_{\substack{z \le q < y\\q\nmid N}}\sum_{\substack{\substack{d|P^{(1)}(z)\\d<D^{(2)}_qQ^{(1)}(u_0)}}}\left|E_{\pi}(N;qd,N)\right| &< \frac{p^*(X_2)N}{\log^3 N},\ \text{and}\label{pstar1}\\
    \sum_{\substack{z \le q < y\\q\nmid N}} |E_{\pi}(N;q,N)|&<\frac{p^*(X_2)N}{\log^3N}\label{pstar2}
\end{align}
by Lemma \ref{large533}. In particular, to apply Lemma \ref{large533} we require in \eqref{pstar1} that $k_1\nmid qd$ and in \eqref{pstar2} that $k_1\nmid q$. However, this is true since $q\ge z>k_1$ is prime and $d\neq k_1$ as $d\mid P^{(1)}(z)$.
\end{proof}

\section{An upper bound for $S(B,P(y))$}
\label{section:3}
We will now prove an upper bound for  $S(B,P(y))$. This is the third term appearing in the bound for $\pi_2(N)$ in Lemma \ref{lem:>}. The bound will be obtained using Theorem~\ref{theo:JR} together with Lemmas \ref{lemma:bf} and \ref{bflem2}. Unlike the proofs of Theorems \ref{theo:S>1} and \ref{theo:2.} we will only provide a single bound for $S(B,P(y))$ rather than giving two bounds depending on the value of $k_i$. This is in part because we define a sequence of different values for $Y=Y_j$ appearing in Lemma \ref{lemma:bf} and then take a overall bound which is independent of the value of the exceptional modulus.

\begin{theorem}
\label{theo:B}
Let $u_0=10^9$ and $\varepsilon=1.452\cdot 10^{-7}$ be the corresponding values in Lemma~\ref{lem: prod-u>u_0}. Let $0<\delta<2$ and $0<\varepsilon_0<1$. Set $X_3$ to be such that $\log\log x_2(X_3)\ge 10.4$ with $x_2$ defined in \eqref{notcon1}. Also let $N> (X_3)^8$ be an even integer and $0 < \alpha_3 < 1/6$ satisfying
\begin{equation*}
\frac{3^{10}N^{\alpha_3}}{ \log^{10} N }\ge \exp{\left(u_0\left(1+\frac{9\cdot 10^{-7}}{\log u_0} \right)\right)}.
\end{equation*}
We have
\begin{align*}
S(B,&P(y))< \frac{U_N N}{\log^2N}
 \Bigg\{1.00005(1+\varepsilon_2((X_3)^8,\delta))\left( 1+\frac{4.51}{\log^2N}\right)\\ & \cdot\left[\frac{2}{\frac{1}{2}-\alpha_3} e^{\gamma}+3\varepsilon C_{1}(\varepsilon)e^2h\left(\frac{3}{2}-3\alpha_3\right)\right] 
\left(1+\varepsilon_0+\frac{9}{\log N}\right)\\ & \cdot\left[\overline{c} +\frac{36}{\log^2 N} +\left(\log \frac{8}{3}+\frac{64}{\log^2 N} \right)\left( \frac{10\log (1+\varepsilon_0)}{\log N}+ \frac{27}{\log^2 N}\right)\right] 
 \\ &+\left(2 e^{\gamma}\prod_{p>2}\left( 1-\frac{1}{(p-1)^2}\right) \right)^{-1}\left(\frac{320\cdot m(X_3)(1+\varepsilon_0)}{3\log(1+\varepsilon_0)}+\frac{0.13(1+\varepsilon_0)\log^5 N}{N^{\frac{1}{8}}\log(1+\varepsilon_0)} \right)\Bigg\},
\end{align*}
with $U_N$ defined in \eqref{UNeq}, $\overline{c}$ defined in Lemma~\ref{lemma:Bover} below, $m(X_3)$ in Lemma~\ref{lemma:bf}, $\varepsilon_2$ in Lemma~\ref{e0lem}, and $C_1(\varepsilon)=106$ from Table \ref{tab:fF}.
\end{theorem}
Before we prove Theorem \ref{theo:B} we start by recalling that
\begin{equation*}
B=\left\{ N-p_1p_2p_3:z \le p_1<y \le p_2 \le p_3,~p_1p_2p_3 <N, (p_1p_2p_3,N)=1   \right\}
\end{equation*}
where $z=N^{1/8}$ and $y=N^{1/3}$ (Equations \eqref{Nyzeq} and \eqref{Bset}). With a view to apply Lemma~\ref{lemma:bf}, we now drop the restriction $(p_1,N)=1$ and relax the condition $p_1p_2p_3<N$, so that $p_1$ and $p_2p_3$ will range over independent intervals giving a bilinear form. In doing this we define
\begin{align}
B^{(j)}:=\{& N-p_1p_2p_3: z\le p_1<y\le p_2\le p_3,
 \notag\\ & \omega_jp_2p_3<N,(p_2p_3,N)=1,~\omega_j\le p_1<\omega_j(1+\varepsilon_0)\}, \label{bjdef}
\end{align}
where
\begin{equation}\label{j0eq}
\omega_j:=z(1+\varepsilon_0)^j~\text{for}~0 \le j \le j_0:=\frac{\log y/z}{\log(1+\varepsilon_0)},
\end{equation}
with $0<\varepsilon_0<1$. We see that 
\begin{equation*}
\left|B^{(j)}\right|=(\pi(Y_j)-\pi(Z_j))\sharp \{(p_2,p_3):y\le p_2\le p_3, ~\omega_jp_2p_3<N,(p_2p_3,N)=1\},
\end{equation*}
where 
\begin{equation}\label{zjyj}
    Z_j:=\omega_j\quad\text{and}\quad Y_j:=\min\left(\omega_j(1+\varepsilon_0),y\right).
\end{equation}
Defining  $\overline{B}:=\cup_jB^{(j)}$, we have
\begin{equation}
\label{eq:Bin}
B \subseteq \overline{B}\subseteq \left\{ N-p_1p_2p_3:z \le p_1<y \le p_2 \le p_3,~p_1p_2p_3 <(1+\varepsilon_0)N  \right\}
\end{equation}
and
\begin{equation}
\label{eq:numb}
S(B,P(y))\le S(\overline{B},P(y))= \sum_{j\le j_0}S(B^{(j)},P(y)).
\end{equation}
We now prove an explicit upper bound for the cardinality of $\overline{B}$ in a similar way as done by Nathanson in \cite[pp. 289--291]{Nathanson}.
\begin{lemma}
\label{lemma:Bover}
Keeping the notation and conditions of Theorem \ref{theo:B}, we have
\begin{align*} 
\left| \overline{B}\right| \le & \left(1+\varepsilon_0+\frac{9}{\log N}\right)\frac{N}{\log N}
\\ &\cdot \left[\overline{c} +\frac{36}{\log^2 N} +\left(\log \frac{8}{3} +\frac{64}{\log^2 N} \right)\left( \frac{10\log (1+\varepsilon_0)}{\log N}+ \frac{27}{\log^2 N}\right)\right],
\end{align*}
with $\overline{c}=\int_{1/8}^{1/3} \frac{\log(2-3\beta)}{\beta (1-\beta)}d \beta <0.363084$.
\end{lemma}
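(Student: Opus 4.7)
The plan is to follow Nathanson's argument on pp.\ 289--291, replacing each asymptotic estimate with its explicit counterpart from Section \ref{section:sur}. By the inclusion \eqref{eq:Bin} it suffices to upper bound the number of prime triples $(p_1,p_2,p_3)$ satisfying $z \le p_1 < y \le p_2 \le p_3$ and $p_1p_2p_3 < (1+\epsilon_1)N$.

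First, with $p_1$ and $p_2$ fixed, I would bound the count of admissible $p_3 \in [p_2, (1+\epsilon_1)N/(p_1p_2))$ via Lemma \ref{thetauplem} combined with partial summation (to pass from $\theta$ to $\pi$). This produces the leading term
\begin{align*}
\frac{(1+\epsilon_1)N}{p_1 p_2 \log((1+\epsilon_1)N/(p_1p_2))},
\end{align*}
together with explicit secondary terms that ultimately account for the $(1+\epsilon_1+9/\log N)$ prefactor and the $36/\log^3 N$ contribution in the conclusion.

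Next, I would sum over $p_2 \in [y,\sqrt{(1+\epsilon_1)N/p_1}]$ using Lemma \ref{lemma:sump} in combination with Lemma \ref{1plem}. After the substitution $p_2 = N^\beta$ the sum transforms into an integral in $\beta$, and the explicit remainder in Lemma \ref{1plem} generates the $\log(8/3)+64/\log^2N$ factor together with the $10\log(1+\epsilon_1)/\log^2 N + 27/\log^3 N$ secondary contribution. Applying the same procedure to the outer sum over $p_1 \in [z,y]$ (again via Lemmas \ref{lemma:sump} and \ref{1plem}) converts everything into the double integral
\begin{align*}
\frac{(1+\epsilon_1)N}{\log N}\int_{1/8}^{1/3}\int_{1/3}^{(1-\alpha)/2}\frac{d\beta\,d\alpha}{\alpha\beta(1-\alpha-\beta)}.
\end{align*}

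Finally, the inner $\beta$-integral is evaluated via the partial fraction decomposition
\begin{align*}
\frac{1}{\beta(1-\alpha-\beta)}=\frac{1}{1-\alpha}\left(\frac{1}{\beta}+\frac{1}{1-\alpha-\beta}\right),
\end{align*}
which gives $\frac{\log(2-3\alpha)}{1-\alpha}$ on the interval $[1/3,(1-\alpha)/2]$; relabeling $\alpha \mapsto \beta$ then recovers the defining integral for $\overline{c}$. The main obstacle will be the bookkeeping: three nested applications of explicit prime-counting estimates generate many cross-error terms that must all be reorganized into the precise structured form displayed in the statement. No new analytic input is required beyond the explicit estimates already assembled in Section \ref{section:sur}.
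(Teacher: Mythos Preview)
Your proposal is correct and follows essentially the same route as the paper's proof, which likewise bounds the count of $p_3$ by an explicit $\pi$-estimate, then handles the $p_2$- and $p_1$-sums via Lemmas~\ref{lemma:sump} and~\ref{1plem} before reducing to the double integral $\overline{c}$. Two small corrections to your attributions: the paper uses Rosser--Schoenfeld's $\pi$-bound directly (rather than Lemma~\ref{thetauplem} plus partial summation) to obtain the $(1+\epsilon_1+9/\log N)$ prefactor, and the $36/\log^3 N$ and $\log(8/3)+64/\log^2 N$ terms actually arise from the outer $p_1$-sum (the first from the boundary term $H(z)/\log^2 z$, the second from $\sum_{z\le p_1<y}1/p_1$), not from the steps you indicated.
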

\begin{proof}
First note that since $p_1 < p_2 \le p_3$ and $p_1p_2p_3<(1+\varepsilon_0)N$, we have 
\begin{align}\label{p123ineq1}
\quad p_3 &< \frac{(1+\varepsilon_0)N}{p_1p_2} \quad\text{and}\\
p_1p_2^2&<(1+\varepsilon_0)N\label{p123ineq2}
\end{align}
Using \eqref{p123ineq1} and $0<\varepsilon_0<1$, we then obtain via \cite[Theorem 1]{R-S}
\begin{equation*}
\pi \left (\frac{(1+\varepsilon_0)N}{p_1 p_2} \right)\le \left(1+\varepsilon_0+\frac{9}{\log N}\right)\frac{N}{p_1 p_2 \log (N/p_1p_2)}.
\end{equation*}
Thus, from the definition \eqref{eq:Bin} of $\overline{B}$ and \eqref{p123ineq1},
\begin{align}
\left| \overline{B}\right| &\le\sum_{\substack{z \le p_1<y \le p_2 \le p_3 \\ p_1p_2p_3 <(1+\varepsilon_0)N}} 1 \le \sum_{\substack{z \le p_1<y \le p_2  \\ p_1p_2^2 <(1+\varepsilon_0)N}} \pi \left (\frac{(1+\varepsilon_0)N}{p_1 p_2} \right) \notag \\ & \le \left(1+\varepsilon_0+\frac{9}{\log N}\right)N\sum_{z \le p_1 <y}\frac{1}{p_1}\sum_{y\le p_2<w}\frac{1}{ p_2 \log (N/p_1p_2)},\label{overBfirstbound}
\end{align}
with $w=\sqrt{\frac{(1+\varepsilon_0)N}{p_1}}$. \par
We now introduce the functions $h_p(t)=\left(\log N/pt\right)^{-1}$ and
\begin{equation*}
I(u)=\int_y^{\sqrt{N/u}} h_u(t)\mathrm{d}\log\log t.
\end{equation*}
Noting that
\begin{equation*}
    y=N^{1/3}\ge X_2^{8/3}> 286,
\end{equation*}
we apply Lemma \ref{lemma:sump} with $f(t)=h_{p_1}(t)$, $g(t)=\log\log t$, 
\begin{equation*}
    c(n)=
    \begin{cases}
        1/n,&\text{if $n$ is prime},\\
        0,&\text{otherwise}
    \end{cases}
\end{equation*}
and $E=1/\log^2 y$ (as a consequence of Lemma \ref{1plem}) to obtain
\begin{align*}
\sum_{y\le p_2<w}\frac{1}{ p_2 \log (N/p_1p_2)}&\le \int_y^wh_{p_1}(t) d \log \log t+\frac{h_{p_1}(w)}{\log^2 y}\\ &= I(p_1)+\int_{\sqrt{\frac{N}{p_1}}}^wh_{p_1}(t) d \log \log t+ \frac{h_{p_1}(w)}{\log^2 y}\\ &
\le I(p_1)+\frac{10\log (1+\varepsilon_0)}{\log^2 N}+ \frac{27}{\log^3 N}.
\end{align*}
Where, in the last step, we substituted $y=N^{1/3}$ and applied the bounds
\begin{align}
\int_{\sqrt{\frac{N}{p_1}}}^wh_{p_1}(t) d \log \log t&\le \frac{10\log (1+\varepsilon_0)}{\log^2 N}\label{10logbound}\\
h_{p_1}(w)=\frac{2}{\log \left(\frac{N}{(1+\varepsilon_0)p_1}\right)}&\le \frac{3}{\log N}.\notag
\end{align} 
Here, \eqref{10logbound} is obtained by the change of variables $t=\sqrt{N/p_1}s$ as in \cite[p. 290]{Nathanson}. Therefore, also using Lemma \ref{1plem},
\begin{align}
\sum_{z \le p_1 <y}\frac{1}{p_1}&\sum_{y\le p_2<w}\frac{1}{ p_2 \log (N/p_1p_2)}\notag\\ &
\le \sum_{z \le p_1 <y}\frac{I(p_1)}{p_1}+\left(\log \frac{8}{3} +\frac{64}{\log^2 N} \right)\left( \frac{10\log (1+\varepsilon_0)}{\log^2 N}+ \frac{27}{\log^3 N}\right).\label{1p1bound}
\end{align}
Next we note that \cite[p. 291]{Nathanson}
\begin{equation*}
    \int_z^y I(u)\mathrm{d}\log \log u=\frac{\overline{c}}{\log N}
\end{equation*}
and upon using the substitution $t=N^{\tau}$,
\begin{align*}
    0=I(y)\le I(z)=\frac{1}{\log N}\int_{1/3}^{7/16}\frac{1}{(\frac{7}{8}-\tau)\tau}\mathrm{d}\tau\le\frac{0.56}{\log N}.
\end{align*}
We can thereby apply Lemma~\ref{lemma:sump} with $f(t)=I(t)$, $g(t)=\log\log t$, 
\begin{equation*}
    c(n)=
    \begin{cases}
        1/n,&\text{if $n$ is prime},\\
        0,&\text{otherwise}
    \end{cases}
\end{equation*}
and $E=1/\log^2 z$ (as a consequence of Lemma \ref{1plem}) to obtain
\begin{align}
\sum_{z \le p_1 <y}\frac{I(p_1)}{p_1}&<\int_z^y I(t)\mathrm{d}\log \log t+ \frac{I(z)}{\log^2 z}\notag\\ & \le \frac{\overline{c}}{\log N} +\frac{36}{\log^3 N}.\label{Iponpbound}
\end{align}
Using \eqref{Iponpbound} to bound \eqref{1p1bound} we can then bound $|\overline{B}|$ in \eqref{overBfirstbound}, which concludes the proof of the lemma.
\end{proof}
Equipped with Lemma~\ref{lemma:Bover}, we now prove Theorem~\ref{theo:B}.
\begin{proof}[Proof of Theorem~\ref{theo:B}]
From \eqref{eq:numb} we see that to bound $S(B,P(y))$ it suffices to bound each $S(B^{(j)},P(y))$ for $0\le j\le j_0$, with $B^{(j)}$ defined in \eqref{bjdef} and $j_0$ defined in \eqref{j0eq}. So, we begin by fixing a value of $j$ and consider the two cases $k_2< K_{\delta}(x_2(Y_j))$ and $k_2\ge K_{\delta}(x_2(Y_j))$. Here, $Y_j$ is as in \eqref{zjyj} and $k_2$, $K_{\delta}$, $x_2$ are as defined in Section \ref{notationsect} with $Y=Y_j$.
\newline

\noindent\textbf{Case 1:} $k_2< K_{\delta}(x_2(Y_j))$.

Let
\begin{align}
\label{eq:D*1}
Q(u):=\prod_{\substack{p< u\\p\nmid N}} p,\quad D^{(3)}:=N^{\frac{1}{2}-\alpha_3},\quad s_b:=\log D^{(3)}/\log y
\end{align}
and 
\begin{align*}
R^{(j)}:=\sum_{\substack{d<D^{(3)}Q(u_0)\\d|P(y)}} \left|r_d^{(j)}\right|
\end{align*}
where $r_d^{(j)}=\left|B_d^{(j)}\right|-\frac{|B^{(j)}|}{\varphi(d)}$, and
\begin{equation*}
    B_d^{(j)}=\sum_{\substack{p_1p_2p_3\equiv N\: (\text{mod }d) \\ z\le p_1<y\le p_2\le p_3,~\omega_j\le p_1<\omega_j(1+\varepsilon_0) \\\omega_jp_2p_3<N,~(p_2p_3,N)=1 }} 1.
\end{equation*}
Now, since $\alpha_3<1/6$, we have
\begin{equation*}
D^{(3)}\ge y=N^{1/3}.
\end{equation*} 
Therefore, we can apply Theorem~\ref{theo:JR} to the set $B^{(j)}$ with $g_n(p)=1/(p-1)$, $Q=Q(u_0)$ and $D=D^{(3)}$ in \eqref{suppereq} to give
\begin{equation*}
S(B^{(j)}, P(y)) <\left|B^{(j)}\right|V(y)(F(s)+\varepsilon C_{1}(\varepsilon) e^2h(s))+R^{(j)}.
\end{equation*}

By \eqref{vjybound} in Lemma~\ref{vjlem} we have $V(y)< 3\frac{U_N}{\log N}\left( 1+\frac{4.51}{\log^2N}\right)$. We also see by \eqref{eq:D*1} that $s_b=\frac{3}{2}-3\alpha_3< 3$ and therefore $F(s_b)=\frac{2e^{\gamma}}{s_b}$ by \eqref{eq: approx-F03}. Hence
\begin{align}\label{bjfirst}
S(B^{(j)},& P(y)) <\\  &\left|B^{(j)}\right|\frac{U_N}{\log N}\left( 1+\frac{4.51}{\log^2N}\right)\left[\frac{2}{\frac{1}{2}-\alpha_3} e^{\gamma}+3\varepsilon C_{1}(\varepsilon) e^2h\left(\frac{3}{2}-3\alpha_3\right)\right]+R^{(j)}.\notag
\end{align}

Now, from the definition of the sets $B^{(j)}$, we obtain
\begin{equation*}
r_d^{(j)}=\sum_{\substack{p_1p_2p_3\equiv N\: (\text{mod }d) \\ z\le p_1<y\le p_2\le p_3,~\omega_j\le p_1<\omega_j(1+\varepsilon_0) \\\omega_jp_2p_3<N,~(p_2p_3,N)=1 }} 1-\frac{1}{\varphi (d)} \sum_{\substack{z\le p_1<y\le p_2\le p_3\\\omega_j\le p_1<\omega_j(1+\varepsilon_0) \\\omega_jp_2p_3<N,~(p_2p_3,N)=1 }}1.
\end{equation*}
We now add the condition $(p_1p_2p_3,d)=1$ to the second sum above. This is equivalent to $(p_1,d)=1$, since the condition $(p_2p_3,d)=1$ already follows from the fact that $d$ divides $P(y)$ and $p_2,p_3\ge y$. This condition decreases the second term above by at most
\begin{equation*}
\frac{1}{\varphi(d)}\sum_{\substack{p_1p_2p_3<(1+\varepsilon_0)N\\p_1|d,p_1 \ge z}}1\le \frac{(1+\varepsilon_0)N}{\varphi(d)}\sum_{\substack{p_1|d,p_1 \ge z}}\frac{1}{p_1}\le \frac{(1+\varepsilon_0)N\omega(d)}{z\varphi(d)}\le \frac{(1+\varepsilon_0)N\log d}{z\varphi(d)\log 2},
\end{equation*}
where the last inequality uses Lemma \ref{omegalem}. We now put \label{def: a_N(n)} $a(n)=a_N(n)$ to be the characteristic function of the set of integers of the form $n=p_2p_3$ with $y\le p_2 \le p_3$ and $(N,p_2p_3)=1$. Then, for $\left|\theta\right|\le 1$ we see that
\begin{equation*}
r_d^{(j)}=\sum_{n<X_j}\sum_{\substack{Z_j \le p <Y_j\\np\equiv N\: (\text{mod }d)}} a(n)-\frac{1}{\varphi (d)}\sum_{n<X_j}\sum_{\substack{Z_j\le p<Y_j \\ (np,d)=1}} a(n)+\frac{(1+\varepsilon_0)\theta N\log d}{z\varphi(d)\log 2},
\end{equation*}
with
\begin{equation} \label{eq: def-final-X-Y-Z}
X_j=\frac{N}{w_j}, \quad
Y_j=\min\left(y,(1+\varepsilon_0)w_j\right) \quad\text{and}\quad
Z_j=w_j.
\end{equation}
With $X=X_j$, $Y=Y_j$ and $Z=Z_j$ the conditions in Lemma~\ref{lemma:bf} hold. Moreover, we also see that, from the condition
\begin{equation*}
    \frac{3^{10}N^{\alpha_3}}{\log^{10}N}\ge\exp\left(u\left(1+\frac{9\cdot 10^{-7}}{\log u}\right)\right) 
\end{equation*}
and Lemma \ref{thetauplem}, \label{def-part-D^*} $D^*:=\frac{\sqrt{X_jY_j}}{\log^{10}Y_j}\ge \frac{\sqrt{N}}{\log^{10} y}\ge D^{(3)}\cdot Q(u_0) $. Therefore, using Lemmas~\ref{muphilem} and \ref{lemma:bf}, and the bound $Y_j\ge z = N^{1/8}$, we obtain
\begin{align}
R^{(j)} \le & \sum_{\substack{d<D^*\\ d|P(y)}}\left|r_d^{(j)}\right|\notag\\
\le & \sum_{\substack{d<D^*\\ d|P(y)}}\left|\sum_{n<X_j}\sum_{\substack{Z_j \le p <X_j\\np\equiv N\: (\text{mod }d)}} a(n)-\frac{1}{\varphi (d)}\sum_{n<X_j}\sum_{\substack{Z_j\le p<X_j\\ (np,d)=1}}a(n)\right|+\sum_{\substack{d<D^*\\d\mid P(y)}}\left(\frac{(1+\varepsilon_0)N\log d}{z\varphi(d)\log 2} \right)\notag\\
\le&\frac{m(X_3)(1+\varepsilon_0)8^3N}{\log^3 N}+\frac{1.1}{\log 2}(1+\varepsilon_0)N^{\frac{7}{8}}\log^2 D^*\notag\\
\le & \frac{m(X_3)(1+\varepsilon_0)8^3N}{\log^3 N}+0.58(1+\varepsilon_0)N^{\frac{7}{8}}\log^2 N\label{rjfirst}
\end{align}
where in the last inequality we have used the bound $D^*\le\sqrt{X_jX_j}\le\sqrt{Ny/z}=N^{29/48}$. Substituting this into \eqref{bjfirst} gives us an upper bound for $S(B^{(j)},P(y))$ in terms of $B^{(j)}$. We now move onto the second case $k_2\ge K_{\delta}(x_2(Y_j))$.\newline

\noindent\textbf{Case 2:} $k_2\ge K_{\delta}(x_2(Y_j))$.

As in the proof of part (b) of Theorem \ref{theo:2.}, we avoid complications with the exceptional zero by working with $P^{(1)}(y)$ as opposed to $P(y)$. In particular, since $S(B^{(j)},P(y))\le S(B^{(j)},P^{(1)}(y))$ it suffices to bound the latter. We also let $Q^{(1)}(u)$ be as in \eqref{Q1eq} with $q_1$ now denoting the largest prime factor of $k_2$. Now, similar to the first case, we apply Theorem \ref{theo:JR} to the set $B^{(j)}$, with $g_n(p)=1/(p-1)$, $Q=Q^{(1)}(u_0)$ and $D=D^{(3)}$ in \eqref{suppereq} to give (cf.\ \eqref{bjfirst})
\begin{align}\label{bjsecond}
    S(B^{(j)},P^{(1)}(y))<\left|B^{(j)}\right|&\frac{U_N(1+\varepsilon_2((X_3)^8,\delta))}{\log N}\left( 1+\frac{4.51}{\log^2N}\right)\\
    &\cdot\left[\frac{2}{\frac{1}{2}-\alpha_3} e^{\gamma}+3\varepsilon C_{1}(\varepsilon) e^2h\left(\frac{3}{2}-3\alpha_3\right)\right]+R^{(1,j)},\notag
\end{align}
where
\begin{equation}\label{R1jeq}
    R^{(1,j)}:=\sum_{\substack{d<D^{(3)}Q^{(1)}(u_0)\\d\mid P^{(1)}(y)}}|r_d^{(j)}|.
\end{equation}
and we have used that
\begin{equation*}
    U_N^{(1)}\le U_N(1+\varepsilon_2((X_3)^8,\delta))
\end{equation*}
by Lemma \ref{e0lem}. We can then bound for $R^{(1,j)}$ in the same way as $R^{(j)}$ in the case $k_2<K_{\delta}(x_2(Y_j))$. However, since in the definition \eqref{R1jeq} of $R^{(1,j)}$ we have $d\mid P^{(1)}(y)$ and thus $k_2\nmid d$, we apply Lemma \ref{bflem2} (as opposed to \ref{lemma:bf}) to obtain
\begin{equation}\label{rjsecond}
    R^{(1,j)} \le  \sum_{\substack{d<D^*\\ d|P^{(1)}(y)}}\left|r_d^{(j)}\right|\le \frac{m^*(X_3)(1+\varepsilon_0)8^3N}{\log^3 N}+0.58(1+\varepsilon_0)N^{\frac{7}{8}}\log^2 N.
\end{equation}
Substituting this into \eqref{bjsecond} gives us an upper bound for $S(B^{(j)},P^{(1)}(y))$ and thus $S(B^{(j)},P(y))$ in terms of $|B^{(j)}|$ when $k_2\ge K_{\delta}(x_2(Y_j))$.\newline

We now combine our bounds for the two cases $k_2<K_{\delta}(x_2(Y_j))$ and $k_2\ge K_{\delta}(x_2(Y_j))$. In particular, by taking the maximum of our expressions for $S(B^{(j)},P(y))$ and $S(B^{(j)},P^{(1)}(y))$ in \eqref{bjfirst} and \eqref{bjsecond} with our bounds for $R^{(j)}$ and $R^{(1,j)}$ in \eqref{rjfirst} and \eqref{rjsecond}, we find that, for all values of $k_2$
\begin{align}\label{bjthird}
    S(B^{(j)},P(y))<\left|B^{(j)}\right|&\frac{U_N(1+\varepsilon_2((X_3)^8,\delta))}{\log N}\left( 1+\frac{4.51}{\log^2N}\right)\\
    &\cdot\left[\frac{2}{\frac{1}{2}-\alpha_3} e^{\gamma}+3\varepsilon C_{1}(\varepsilon) e^2h\left(\frac{3}{2}-3\alpha_3\right)\right]\\
    &+\frac{m(X_3)(1+\varepsilon_0)8^3N}{\log^3 N}+0.58(1+\varepsilon_0)N^{\frac{7}{8}}\log^2 N,\notag
\end{align}
noting that $m^*(X_3)\le m(X_3)$ by their respective definitions in Lemmas \ref{lemma:bf} and \ref{bflem2}. To finish off, we sum \eqref{bjthird} over $0\le j\le j_0$. Namely, using \eqref{eq:numb} along with the fact that $\overline{B}=\bigcup_j B_j$ is a disjoint union and $j_0=\frac{\log(y/z)}{\log(1+\varepsilon_0)}=\frac{5\log N}{24\log(1+\varepsilon_0)}$,
\begin{align*}
    S(B,P(y))&\le\sum_{j\le j_0} S(B^{(j)},P(y))\notag\\
    &\le \left|\overline{B}\right|\frac{U_N(1+\varepsilon_2((X_3)^8,\delta))}{\log N}\left( 1+\frac{4.51}{\log^2N}\right)\notag\\
    &\qquad\cdot\left[\frac{2}{\frac{1}{2}-\alpha_3} e^{\gamma}+3\varepsilon C_{1}(\varepsilon) e^2h\left(\frac{3}{2}-3\alpha_3\right)\right]\notag\\
    &\qquad+\frac{320\cdot m(X_3)(1+\varepsilon_0)N}{3\log(1+\varepsilon_0)\log^2 N}+\frac{0.13(1+\varepsilon_0)N^{\frac{7}{8}}\log^3 N}{\log(1+\varepsilon_0)}.
\end{align*}
Applying our bound for $|\overline{B}|$ in Lemma \ref{lemma:Bover} then completes the proof of the theorem.
\end{proof}

\section{Proof of Theorem~\ref{Theo:B.}}
\label{section:B}
Being now equipped with a lower bound on $S(A,P(z))$ (Theorem~\ref{theo:S>1}) and upper bounds on $\sum_{\substack{z \le q <y}}S(A_q,P(z))$ (Theorem \ref{theo:2.}) and $S(B,P(y))$ (Theorem \ref{theo:B}) we can prove our main result,  Theorem~\ref{Theo:B.}, by using the estimate on $\pi_2(N)$ given in Lemma~\ref{lem:>}. Specifically we need to select suitable values for $X_2$, $X_3$, $\delta$, $\alpha_1$, $\alpha_2$, $\alpha_3$ and $\varepsilon_0$ such that the conditions in Theorems~\ref{theo:S>1}, \ref{theo:2.} and \ref{theo:B} hold, and \eqref{pi2lower} is true for each possible range of $k_1$. This is obtained with $X_2=\exp(\exp(32.7))$, $X_3=\exp(\exp(30.62))$, $\delta=1.478$, $\alpha_1=\alpha_2=\alpha_3=10^{-5}$ and $\varepsilon_0=10^{-4}$. Note that when computing the lower bound from Theorem \ref{theo:S>1}, we can use \eqref{eq:A>} and replace $|A|$ by $\frac{N}{\log N}$ once we have ensured that the lower bound is positive. In particular, with these choices of parameters, we find that for $N\ge X_2\ge X_3^8$
\begin{equation*}
    \pi_2(N)> \frac{1}{10}\cdot\frac{U_N N}{\log^2N}
\end{equation*}
for $k_1<K_{\delta}(x_1(N))$ and
\begin{equation*}
    \pi_2(N)> 2\cdot 10^{-4}\cdot\frac{U_N N}{\log^2N}
\end{equation*}
for $k_1\ge K_{\delta}(x_1(N))$. 

In obtaining these parameters, we found that the most sensitive variable was $\delta$. So, we only roughly optimised over $\alpha_1$, $\alpha_2$, $\alpha_3$ and $\varepsilon_0$ before focusing on finding the value of $\delta$ which allowed us to take the lowest value of $X_2$. Note that increasing $\delta$ causes the bound $\beta_0$ on the Siegel zero to get very large (see \eqref{siegelmin}), whereas taking $\delta$ smaller causes $\varepsilon_i(X_2,\delta)$ to become too large (see Lemma \eqref{e0lem}). It therefore seems that the clearest way to improve our result would be to improve on the Siegel zero bounds we used from \cite{Bordignon1} and \cite{Bordignon2}.

It is however interesting to note that using the technique developed here it would be impossible\footnote{That is, unless some far-reaching result is proven, such as the non-existence of Siegel zeros.} to prove Theorem~\ref{Theo:B.} for $N\ge \exp( \exp(22))$. This is because our lower bound for $\pi_2(N)$ (accounting for the possibility of a large exceptional zero) is at best
\begin{equation*} \label{eq-fancy-F}
\mathcal{F}(X_2,\delta)=2\log 3- (1+\varepsilon_1(X_2,\delta))\log(6)-(1+\varepsilon_2(X_2^{1/8},\delta))\overline{c}
\end{equation*}
and $\mathcal{F}(\exp( \exp(22)),2)<0$. Since taking $\delta$ close to $2$ is very difficult without better bounds on the exceptional zero, it would be tough to even reach $N \ge \exp( \exp(30))$ with the current framework. As a result, a different approach would be required to obtain a substantial improvement to Theorem~\ref{Theo:B.}. In this regard, Cai \cite{Cai1}, Wu \cite{Wu1} and very recently Li \cite{li2024} give an alternate (albeit more complicated) proof of Chen's theorem which is asymptotically superior to the method we adapted from Nathanson \cite{Nathanson}. Therefore, it is likely that an explicit version of these methods (or similar) would give a better result than the one obtained here.

\section{Proof of Corollary \ref{distinctcor} and Theorem~\ref{Theo:B1.}}
\label{section:B1}
In this section we prove Corollary \ref{distinctcor} and Theorem~\ref{Theo:B1.} which follow readily from our main result (Theorem \ref{Theo:B.}). For Corollary \ref{distinctcor} we let $\pi_2(N)$ be as in Theorem \ref{Theo:B.}, and \label{def-pi2-*} $\pi_2^*(N)$ denote the number of representations of an even integer $N$ as the sum of a prime and a square-free number $\eta>1$ with at most two prime factors.
So, let $N>\exp(\exp(32.7))$ be an even integer and consider representations of the form
\begin{equation}\label{Nperep}
    N=p+\eta,
\end{equation}
where $p$ is prime and $\eta$ has at most two prime factors. If $\eta$ is not square-free there are two possible cases: either $\eta=1$, or $\eta$ has two identical prime factors.

For a fixed value of $N$, the case $\eta=1$ corresponds to at most one representation of the form \eqref{Nperep}. That is, either $N-1$ is prime and we set $p=N-1$, or $\eta=1$ does not give any valid representation.

On the hand if $\eta$ has two identical prime factors, $q_1$ and $q_2$, then $q_1=q_2<\sqrt{N}$. As a result, such values of $\eta$ correspond to at most $\sqrt{N}$ representations of the form \eqref{Nperep}. Combining these two cases, we have, by Theorem \ref{Theo:B.},
\begin{equation*}
    \pi_2^*(N)\ge\pi_2(N)-1-\sqrt{N}>2\cdot 10^{-4}\cdot\frac{U_N N}{\log^2 N}-1-\sqrt{N}>0,
\end{equation*}
which proves Corollary \ref{distinctcor}.

We now prove Theorem \ref{Theo:B1.}. For this, we require the following result proved in \cite{Dudek1}.
\begin{theorem}[Dudek]
\label{theo:D}
All integers greater than two can be written as the sum of a prime and a square-free number.
\end{theorem}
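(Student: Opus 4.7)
The plan is a classical Möbius-inversion sifting argument. For $N > 2$ define
\begin{equation*}
R(N) := \sum_{\substack{p \le N \\ p \text{ prime}}} \mu^2(N - p),
\end{equation*}
with the convention $\mu^2(0) = 0$. Since Dudek's theorem is equivalent to $R(N) \ge 1$, we aim to produce a strictly positive lower bound for $R(N)$. Using the identity $\mu^2(n) = \sum_{d^2 \mid n} \mu(d)$ and interchanging the order of summation gives
\begin{equation*}
R(N) = \sum_{d \le \sqrt{N}} \mu(d)\, \pi(N; d^2, N),
\end{equation*}
where truncation at $\sqrt{N}$ is harmless because $d^2 > N$ contributes at most one term each.

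Next I would split the sum at two parameters $y_1 < y_2 \le \sqrt{N}$, say $y_1 = (\log N)^A$ and $y_2 = N^{1/2 - \eta}$ for small fixed $\eta > 0$. For $d \le y_1$ an explicit prime number theorem in arithmetic progressions (of the type assembled in Section~\ref{sec:PNT}, together with a Siegel-style bound on any exceptional zero) lets me replace $\pi(N; d^2, N)$ by $\pi(N)/\varphi(d^2)$ with controlled error. Completing the main term over all $d \ge 1$ yields
\begin{equation*}
\pi(N) \sum_{d \ge 1} \frac{\mu(d)}{\varphi(d^2)} = C_0\, \pi(N), \qquad C_0 := \prod_p \left( 1 - \frac{1}{p(p-1)} \right) > 0,
\end{equation*}
with a truncation tail of size $O(\pi(N)/y_1)$. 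For $y_1 < d \le y_2$ the Brun--Titchmarsh inequality \cite[Theorem 2]{M-V} gives
\begin{equation*}
\pi(N; d^2, N) \le \frac{2N}{\varphi(d^2)\log(N/d^2)},
\end{equation*}
which on summation, using $\varphi(d^2) = d\,\varphi(d)$ and convergence of $\sum_d 1/(d\,\varphi(d))$, contributes $O(N/(y_1 \eta \log N))$. The remaining range $y_2 < d \le \sqrt{N}$ is handled trivially by $\pi(N; d^2, N) \le N/d^2 + 1$, contributing $O(N^{1/2+\eta}/\log N)$.

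Combining these pieces yields $R(N) \ge (C_0/2)\,\pi(N)$ for all $N$ beyond some effectively computable threshold $N_0$. The main obstacle will be making every constant fully explicit so that $N_0$ is within reach of direct verification: the PNT-in-APs error for small moduli $d^2$ must avoid substantial loss from a possible Siegel zero, and the Brun--Titchmarsh factor of $2$ must be paired with the convergence of $\sum 1/(d\,\varphi(d))$ so that the intermediate range is genuinely smaller than the main term $C_0 \pi(N)$. Finally, for $2 < N \le N_0$ the statement is checked by direct computation: for each such $N$ a brute-force search over primes $p < N$ produces one with $N - p$ square-free. Combining the analytic range $N > N_0$ with the computational range completes the proof of the theorem as stated.
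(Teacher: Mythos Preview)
The paper does not give its own proof of this statement; it is quoted as a result of Dudek \cite{Dudek1} and used as a black box in Section~\ref{section:B1}. Your outline is in fact essentially Dudek's argument: expand $\mu^2$ by M\"obius inversion, extract the main term $C_0\,\pi(N)$ with $C_0=\prod_p(1-1/(p(p-1)))$ (Artin's constant), treat small moduli $d^2$ by an explicit PNT in arithmetic progressions, intermediate moduli by Brun--Titchmarsh, and the tail trivially, then cover $N\le N_0$ by direct search. The sketch is sound; two small points worth tidying are (i) the contribution of the range $y_2<d\le\sqrt{N}$ is $O(N^{1/2+\eta})$, not $O(N^{1/2+\eta}/\log N)$ as you wrote, though this is still $o(\pi(N))$, and (ii) to keep $N_0$ genuinely effective you must treat the possible Siegel zero via an explicit bound rather than Siegel--Walfisz, exactly as you note.
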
 
Theorem~\ref{Theo:B1.} now follows from Theorem~\ref{Theo:B.} and, by Theorem~\ref{theo:D}, computing the largest $k$ such that
\begin{equation*}
\prod_{i \le k} p_i \le e^{e^{32.7}}.
\end{equation*}
By \cite[Theorems 3 \& 4]{R-S} we have $e^{29.2}<k< e^{29.3}$. It should be possible to exactly compute $k$, but we will not do so here. We also note that Theorem~\ref{theo:D} was improved by Lee and Francis in \cite{Forrest}, and Hathi and Johnston in \cite{H_J_21}. However, such improvements have a negligible impact on Theorem~\ref{Theo:B1.} unless Theorem \ref{Theo:B.} is substantially improved.\par

\section{Notation index}\label{sectnot}
{As this paper contains a lot of different notation, below we have added page and equation numbers for the definitions of different pieces of notation used. If there is no equation number we will instead state the theorem, lemma, proof etc.\ where the notation first appears. Note that some notations are defined twice: in general (gen.) and then in a particular way to be applied to the proof of the main result (appl.). Note also that throughout the paper, letters $p$, $q$ and any subscripts thereof (e.g.\ $p_i$ and $q_i$) will always denote prime numbers.}

\small{
\begin{align*}
    &\alpha,\textit{p}\pageref{eq:H11},\textit{eq.}\eqref{eq:H11} &
    &\alpha_1,\textit{p}\pageref{theo:S>1},\textit{Thm.}\ref{theo:S>1} \\
    &\alpha_2,\textit{p}\pageref{theo:2.},\textit{Thm.}\ref{theo:2.} &
    &\alpha_3,\textit{p}\pageref{theo:B},\textit{Thm.}\ref{theo:B} \\
    &\beta_k,\textit{p}\pageref{lemma:PNTPAP1},\textit{Lem.}\ref{lemma:PNTPAP1} &
    &\beta_0(x),\textit{p}\pageref{siegelmin},\textit{eq.}\eqref{siegelmin}\\
    &\beta_0^*(x),\textit{p}\pageref{betabound2},\textit{eq.}\eqref{betabound2} &
    &\gamma_{3},\textit{p}\pageref{lem:hsm1},\textit{Lem.}\ref{lem:hsm1}\\
    &\gamma_{s_0},\textit{p}\pageref{lem:hsm1},\textit{Lem.}\ref{lem:hsm1}&
    &\delta,\textit{p}\pageref{notationsect},\textit{Sec.} \ref{notationsect}\\
    &\varepsilon,\textit{p}\pageref{eq:cond1},\textit{eq.}\eqref{eq:cond1} &
    &\varepsilon_i(X_2,\delta),\textit{p}\pageref{eq: def-epsilon0-X2-delta},\textit{eq.}\eqref{eq: def-epsilon0-X2-delta} \\
    &\theta(x),\textit{p}\pageref{eq:theta},\textit{eq.}\eqref{eq:theta}&
    &\kappa_{s_0},\textit{p}\pageref{lemma:forg},\textit{Lem.}\ref{lemma:forg}\\
    &\tilde{\kappa},\textit{p}\pageref{lemma:forg},\textit{Lem.}\ref{lemma:forg}&
    &\nu(x),\textit{p}\pageref{siegelmin},\textit{eq.}\eqref{siegelmin}\\
    &\xi_{s_0},\textit{p}\pageref{eq:tau},\textit{aft.eq.}\eqref{eq:tau}&
    &\tilde{\xi},\textit{p}\pageref{eq:tau},\textit{aft.eq.}\eqref{eq:tau}\\
    &\pi_2(N),\textit{p}\pageref{Theo:B.},\textit{Thm.}\ref{Theo:B.} &
    &\pi^*_2(N),\textit{p}\pageref{def-pi2-*},\textit{Sec.} \ref{section:B1}\\
    &\tau_n,\textit{p}\pageref{eq:tau},\textit{eq.}\eqref{eq:tau}&
    &\tau'_n,\textit{p}\pageref{eq:taup},\textit{eq.}\eqref{eq:taup}\\
    &\varphi^*(n),\textit{p}\pageref{def-varphi-*-n},\textit{pf Lem.}\ref{lem: new-case-b-main} &
    &\chi^*,\textit{p}\pageref{induceeq},\textit{pf Lem.}\ref{small313}\\
    &\chi_0,\textit{p}\pageref{notrivialPNTAP},\textit{Lem.}\ref{notrivialPNTAP}&
    &\omega(n),\textit{p}\pageref{omegalem},\textit{Lem.}\ref{omegalem} \\
    &\omega(n;q,a),\textit{p}\pageref{AAqeq},\textit{Sec.} \ref{sec:ps}&
    &\omega_j,\textit{p}\pageref{j0eq},\textit{eq.}\eqref{j0eq}\\
    &A,\textit{p}\pageref{theo:JR},\textit{Thm.}\ref{theo:JR} \textit{(gen.)} &
    &A,\textit{p}\pageref{Aapeq},\textit{eq.}\eqref{Aapeq} \textit{(appl.)}\\
    &A^{(j)},\textit{p}\pageref{eq: def-mj-Aj-Pj(x)},\textit{eq.}\eqref{eq: def-mj-Aj-Pj(x)}&
    &A_d,\textit{p}\pageref{theo:JR},\textit{Thm.}\ref{theo:JR} \textit{(gen.)} \\
    &A_d,\textit{p}\pageref{Aapeq},\textit{eq.}\eqref{Aapeq} \textit{(appl.)}&
    &a(n)=a_N(n), \textit{p}\pageref{def: a_N(n)},\textit{pf Thm.}\ref{theo:B}\\
    &a(x),\textit{p}\pageref{eq: def-a(X_2)},\textit{eq.}\eqref{eq: def-a(X_2)} &
    &a_1(x),\textit{p}\pageref{eq: def-a_1(X_2)},\textit{eq.}\eqref{eq: def-a_1(X_2)}\\
    &B,\textit{p}\pageref{Bset},\textit{eq.}\eqref{Bset}&
    &\overline{B},\textit{p}\pageref{eq:Bin},\textit{bef.eq.}\eqref{eq:Bin}\\
    &B_{\mu, \varphi}, \textit{p}\pageref{muphilem},\textit{Lem.}\ref{muphilem}&
    &B^{(j)},\textit{p}\pageref{bjdef},\textit{eq.}\eqref{bjdef}\\
    &B_d^{(j)},\textit{p}\pageref{eq:D*1},\textit{aft.eq.}\eqref{eq:D*1}&
    &C_1(\varepsilon),\textit{p}\pageref{tab:fF}, \textit{Table } \ref{tab:fF}\\
    &C_2(\varepsilon),\textit{p}\pageref{tab:fF}, \textit{Table } \ref{tab:fF}&
    &\overline{C}(\varepsilon),\textit{p}\pageref{def-C-upper-bar},\textit{pf Thm.}\ref{theo:S>1}\\
    &\overline{c},\textit{p}\pageref{lemma:Bover},\textit{Lem.}\ref{lemma:Bover}&
    &c(x),\textit{p}\pageref{c1eq},\textit{eq.}\eqref{c1eq}\\
    &c_1(x),\textit{p}\pageref{c1eq},\textit{eq.}\eqref{c1eq}&
    &c_2(x),\textit{p}\pageref{eq: def-c-2},\textit{eq.}\eqref{eq: def-c-2}\\
    &c_3(x),\textit{p}\pageref{eq: def-c-3},\textit{eq.}\eqref{eq: def-c-3}&
    &c_4(x),\textit{p}\pageref{eq: def-c-4},\textit{eq.}\eqref{eq: def-c-4}\\
    &c_4^*(x),\textit{p}\pageref{eq: def-c-4-*},\textit{eq.}\eqref{eq: def-c-4-*}&
    &c_{\alpha,x},\textit{p}\pageref{eq: def-c-alpha1-X2},\textit{eq.}\eqref{eq: def-c-alpha1-X2}\\
    &c_n,\textit{p}\pageref{tab:cn1}, \textit{Table } \ref{tab:cn1}&
    &D,\textit{p}\pageref{eq:cond1},\textit{Thm.}\ref{theo:JR}\\
    &D^*,\textit{p}\pageref{lemma:bf},\textit{Lem.}\ref{lemma:bf} \textit{(gen.)}&
    &D^*,\textit{p}\pageref{def-part-D^*},\textit{pf Thm.}\ref{theo:B} \textit{(appl.)}\\
    &D_0,\textit{p}\pageref{def-D_0},\textit{pf Lem.}\ref{lemma:bf}&
    &D^{(1)},\textit{p}\pageref{eq: TheoA-def-D-s-Q},\textit{eq.}\eqref{eq: TheoA-def-D-s-Q}\\
    &D^{(1)}_j,\textit{p}\pageref{linearlem},\textit{Lem.}\ref{linearlem}&
    &D^{(2)},\textit{p}\pageref{eq: def-Q-D2-D2q-sq},\textit{eq.}\eqref{eq: def-Q-D2-D2q-sq}\\
    &D^{(2)}_q,\textit{p}\pageref{eq: def-Q-D2-D2q-sq},\textit{eq.}\eqref{eq: def-Q-D2-D2q-sq}&
    &D^{(3)},\textit{p}\pageref{eq:D*1},\textit{eq.}\eqref{eq:D*1}\\
    &D_{f}(x; q_1, q_2, l),\textit{p}\pageref{eq: def-D-f-x-q1-q2-l},\textit{eq.}\eqref{eq: def-D-f-x-q1-q2-l}&
    &E_j,\textit{p}\pageref{linearlem},\textit{Lem.}\ref{linearlem}\\
    &\mathcal{E}(x),\textit{p}\pageref{eq: def-mathcal-E},\textit{eq.}\eqref{eq: def-mathcal-E}&
    &E_f(x;k,l),\textit{p}\pageref{eq: E_f(x;k,l)},\textit{eq.}\eqref{eq: E_f(x;k,l)}\\
    &F(s),\textit{p}\pageref{def-f(s)-F(s)},\textit{eq.}\eqref{def-f(s)-F(s)},\eqref{eq: approx-F03},\eqref{eq: approx-F34}&
    &\mathcal{F}(x,\delta),\textit{p}\pageref{eq-fancy-F}, \textit{Sec.}\ref{section:B}\\
    &f(s),\textit{p}\pageref{def-f(s)-F(s)},\textit{eq.}\eqref{def-f(s)-F(s)},\eqref{eq: approx-f02},\eqref{eq: approx-f24}&
    &f(x,\chi),\textit{p}\pageref{eq: f(x,chi)},\textit{eq.}\eqref{eq: f(x,chi)}\\
    &f_n(s),\textit{p}\pageref{eq:f22},\textit{eq.}\eqref{eq:f11}-\eqref{eq:odd}&
    &G(z, \lambda^{\pm}),\textit{p}\pageref{theo:G},\textit{Prop.}\ref{theo:G}\\
    &g_n(d),\textit{p}\pageref{theo:JR},\textit{Thm.}\ref{theo:JR} \textit{(gen.)}& 
    &g_n(d),\textit{p}\pageref{subsec: expl-epsilon},\textit{Sec.} \ref{subsec: expl-epsilon} \textit{(appl.)}\\
    &H := H(N),\textit{p}\pageref{small313},\textit{Lem.}\ref{small313}&
    &H(s),\textit{p}\pageref{eq:H11},\textit{eq.}\eqref{eq:H11}\\
    &h(s),\textit{p}\pageref{eq-def-h(s)},\textit{eq.}\eqref{eq-def-h(s)}&
    &h_n(s)\text{(Sec.\ref{section:LS})},\textit{p}\pageref{eq:hnf},\textit{eq.}\eqref{eq:hnf}\\
    &h_p(t) \text{(Sec.\ref{section:3})},\textit{p}\pageref{overBfirstbound}, \textit{pf Lem.} \ref{lemma:Bover}&
    &I(u),\textit{p}\pageref{overBfirstbound},\textit{pf Lem.} \ref{lemma:Bover}\\
    &\textnormal{Ind}_k,\textit{p}\pageref{lemma:PNTPAP1},\textit{Lem.} \ref{lemma:PNTPAP1}&
    &J(k),\textit{p}\pageref{lemma:exprec},\textit{Lem.} \ref{lemma:exprec}\\
    &j_0,\textit{p}\pageref{j0eq},\textit{eq.}\eqref{j0eq}&
    &K,\textit{p}\pageref{eq:epsilonK},\textit{eq.}\eqref{eq:epsilonK}\\
    &K_{\delta}(x),\textit{p}\pageref{notcon2},\textit{eq.}\eqref{notcon2}&
    &k_i,\textit{p}\pageref{eq:k11},\textit{eq.}\eqref{eq:k11}\\
    &k_{x},\textit{p}\pageref{eq: def-k-x},\textit{eq.}\eqref{eq: def-k-x}&
    &k_0(x_i),\textit{p}\pageref{eq:k11}, \textit{bef.} \eqref{eq:k11}\\
    &\ell,\textit{p}\pageref{eq: def-mj-Aj-Pj(x)}, \textit{bef.} \eqref{eq: def-mj-Aj-Pj(x)}&
    &l(x),\textit{p}\pageref{elleq},\textit{eq.}\eqref{elleq}\\
    &l^*(x),\textit{p}\pageref{ellstareq},\textit{eq.}\eqref{ellstareq}&
    &l_1(x),\textit{p}\pageref{ell1eq},\textit{eq.}\eqref{ell1eq}\\
    &l^*_1(x),\textit{p}\pageref{ell1stareq},\textit{eq.}\eqref{ell1stareq}&
    &l_2(x),\textit{p}\pageref{ell2eq},\textit{eq.}\eqref{ell2eq}\\
    &\text{li}(x),\textit{p}\pageref{eq: def-li(x)},\textit{eq.}\eqref{eq: def-li(x)}&
    &M,\textit{p}\pageref{eq:MM},\textit{eq.}\eqref{eq:MM}\\
    &\overline{m}_{\alpha_1,x},\textit{p}\pageref{eq: def-m-bar-alpha1-X2},\textit{eq.}\eqref{eq: def-m-bar-alpha1-X2}&
    &m_j,\textit{p}\pageref{eq: def-mj-Aj-Pj(x)},\textit{eq.}\eqref{eq: def-mj-Aj-Pj(x)}\\
    &m(x),\textit{p}\pageref{lemma:bf}, \textit{Lem.} \ref{lemma:bf}&
    &m^*(x),\textit{p}\pageref{betabound3}, \textit{Lem.} \ref{bflem2}\\
    &\mathbb{P},\textit{p}\pageref{theo:JR},\textit{Thm.} \ref{theo:JR} \textit{(gen.)}&
    &\mathbb{P},\textit{p}\pageref{eq: TheoA-def-D-s-Q},\textit{pf Thm.} \ref{theo:S>1}\textit{(appl.)}\\
    &P(x),\textit{p}\pageref{theo:JR},\textit{Thm.} \ref{theo:JR} \textit{(gen.)}&
    &P(x),\textit{p}\pageref{notcon2},\textit{eq.}\eqref{notcon2} \textit{(appl.)}\\
    &P^{(j)}(x),\textit{p}\pageref{eq: def-mj-Aj-Pj(x)},\textit{eq.}\eqref{eq: def-mj-Aj-Pj(x)}&
    &p(x),\textit{p}\pageref{peq},\textit{eq.}\eqref{peq}\\
    &p^*(x),\textit{p}\pageref{large533}, \textit{Lem.} \ref{large533}&
    &p_i(x),\textit{p}\pageref{peq},\textit{eq.}\eqref{peq}\\
    &Q(u),\textit{p}\pageref{eq: TheoA-def-D-s-Q},\textit{eq.}\eqref{eq: TheoA-def-D-s-Q}&
    &Q^{(1)}(u),\textit{p}\pageref{Q1eq},\textit{eq.}\eqref{Q1eq}\\
    &Q_1(x),\textit{p}\pageref{notcon2},\textit{eq.}\eqref{notcon2}&
    &q_j,\textit{p}\pageref{eq: def-mj-Aj-Pj(x)}, \textit{bef.} \eqref{eq: def-mj-Aj-Pj(x)}\\
    &R,\textit{p}\pageref{Rdef},\textit{eq.}\eqref{Rdef}&
    &R_i,\textit{p}\pageref{theo:kadiri},\textit{Thm.} \ref{theo:kadiri}\\
    &R^{(j)},\textit{p}\pageref{eq:D*1},\textit{aft.eq.}\eqref{eq:D*1}&
    &R^{(1,j)},\textit{p}\pageref{R1jeq},\textit{eq.}\eqref{R1jeq}\\
    &r(d),\textit{p}\pageref{theo:JR},\textit{Thm.} \ref{theo:JR} \textit{(gen.)}&
    &r(d),\textit{p}\pageref{eq: def-r(d)-rk(d)},\textit{eq.} \eqref{eq: def-r(d)-rk(d)} \textit{(appl.)}\\
    &r_k(d),\textit{p}\pageref{eq: def-r(d)-rk(d)},\textit{eq.} \eqref{eq: def-r(d)-rk(d)}&
    &r_d^{(j)},\textit{p}\pageref{eq:D*1},\textit{aft.eq.}\eqref{eq:D*1}\\
    &S(A,n),\textit{p}\pageref{eq: def-S-A-n},\textit{eq.}\eqref{eq: def-S-A-n}&
    &S(A, \mathbb{P}, z),\textit{p}\pageref{theo:JR},\textit{Thm.} \ref{theo:JR}\\
    &s,\textit{p}\pageref{eq-def-h(s)},\textit{Thm.} \ref{theo:JR}&
    &s^{(1)},\textit{p}\pageref{eq: TheoA-def-D-s-Q},\textit{eq.}\eqref{eq: TheoA-def-D-s-Q}\\
    &s_b,\textit{p}\pageref{eq:D*1},\textit{eq.}\eqref{eq:D*1}&
    &s^{(1)}_j,\textit{p}\pageref{linearlem},\textit{Lem.}\ref{linearlem}\\
    &s^{(2)}_q,\textit{p}\pageref{eq: def-Q-D2-D2q-sq},\textit{eq.}\eqref{eq: def-Q-D2-D2q-sq}&
    &T_N(D,z),\textit{p}\pageref{eq: def-T-n-D-z},\textit{eq.}\eqref{eq: def-T-n-D-z}\\
    &U_N,\textit{p}\pageref{UNeq},\textit{eq.}\eqref{UNeq}&
    &U^{(j)}_N,\textit{p}\pageref{eq: def-UNj},\textit{eq.}\eqref{eq: def-UNj}\\
    &u_0,\textit{p}\pageref{lem: prod-u>u_0}, \textit{Lem.} \ref{lem: prod-u>u_0}&
    &V(x),\textit{p}\pageref{vzdef},\textit{eq.}\eqref{vzdef}\textit{(gen.)}\\ 
    &V(x),\textit{p}\pageref{eq: def-V(x)-Vj(x)},\textit{eq.}\eqref{eq: def-V(x)-Vj(x)}\textit{(appl.) }&
    &V^{(j)}(x),\textit{p}\pageref{eq: def-V(x)-Vj(x)},\textit{eq.}\eqref{eq: def-V(x)-Vj(x)}\\
    &v_0,\textit{p}\pageref{lemma:bf}, \textit{Lem.} \ref{lemma:bf}&
    &v_k(x),\textit{p}\pageref{veq},\textit{eq.}\eqref{veq}\\
    &v_k'(x),\textit{p}\pageref{eq:v1},\textit{eq.}\eqref{eq:v1}&
    &w,\textit{p}\pageref{overBfirstbound}, \textit{aft.eq.} \eqref{overBfirstbound}\\
    &X,\textit{p}\pageref{XYZdef},\textit{eq.}\eqref{XYZdef} \textit{(gen.)}&
    &X,\textit{p}\pageref{eq: def-final-X-Y-Z},\textit{eq.}\eqref{eq: def-final-X-Y-Z} \textit{(appl.)}\\
    &X_1, \textit{p}\pageref{cor:chisum}, \textit{Lem.} \ref{cor:chisum}&
    &X_2,\textit{p}\pageref{notationsect}, \textit{Sec.} \ref{notationsect}\\
    &X_3,\textit{p}\pageref{notationsect}, \textit{Sec.} \ref{notationsect}&
    &X_A, \textit{p}\pageref{eq: def-X_A},\textit{eq.}\eqref{eq: def-X_A}\\
    &X_j,\textit{p}\pageref{eq: def-final-X-Y-Z},\textit{eq.}\eqref{eq: def-final-X-Y-Z}&
    &x_1(N),\textit{p}\pageref{notcon1},\textit{eq.}\eqref{notcon1}\\
    &x_2(Y),\textit{p}\pageref{notcon1},\textit{eq.}\eqref{notcon1}&
    &x^{(n)}_k,\textit{p}\pageref{eq:f4}, \textit{aft.eq.} \eqref{eq:f4}\\
    &Y,\textit{p}\pageref{XYZdef},\textit{eq.}\eqref{XYZdef} \textit{(gen.)}&
    &Y,\textit{p}\pageref{eq: def-final-X-Y-Z},\textit{aft.eq.}\eqref{eq: def-final-X-Y-Z} \textit{(appl.)}\\
    &Y_j,\textit{p}\pageref{zjyj},\textit{eq.}\eqref{zjyj}&
    &y,\textit{p}\pageref{eq: def-z-y},\textit{eq.}\eqref{eq: def-z-y}\\
    &y_n,\textit{p}\pageref{eq: def-y_n}, \textit{Sec.} \ref{ELS}&
    &Z,\textit{p}\pageref{XYZdef},\textit{eq.}\eqref{XYZdef} \textit{(gen.)}\\
    &Z,\textit{p}\pageref{eq: def-final-X-Y-Z},\textit{aft.eq.}\eqref{eq: def-final-X-Y-Z} \textit{(appl.)}&
    &Z_j,\textit{p}\pageref{zjyj},\textit{eq.}\eqref{zjyj}\\
    &z,\textit{p}\pageref{theo:JR},\textit{Thm.} \ref{theo:JR} \textit{(gen.)}&
    &z,\textit{p}\pageref{eq: def-z-y},\textit{eq.}\eqref{eq: def-z-y} \textit{(appl.)}
\end{align*}
}

\newpage

\end{document}